\theoremstyle{plain}
\newtheorem{theorem}{Theorem}[section]
\newtheorem{lemma}{Lemma}[section]
\newtheorem{prop}{Proposition}[section]
\newtheorem*{claim*}{Claim}
\theoremstyle{definition}
\newtheorem{definition}{Definition}[section]
\newtheorem{remark}{Remark}[section]
\newcommand{\PP}{\mathbb{P}}
\newcommand{\CC}{\mathbb{C}}
\newcommand{\QQ}{\mathbb{Q}}
\newcommand{\ZZ}{\mathbb{Z}}
\newcommand{\calO}{\mathcal{O}}
\newcommand{\calN}{\mathcal{N}}
\newcommand{\calB}{\mathcal{B}}
\newcommand{\calR}{\mathcal{R}}
\newcommand{\zz}{\mathbb{Z}/2\mathbb{Z}}
\newcommand{\PGL}{\mathbb{P}\mathrm{GL}}
\newcommand{\GL}{\mathrm{GL}}
\newcommand{\diag}{\mathrm{diag}}
\newcommand{\Bl}{\mathrm{Bl}}
\newcommand{\rk}{\mathrm{rk}}
\newcommand{\Stab}{\mathrm{Stab}}
\newcommand{\Aut}{\mathrm{Aut}}
\newcommand{\codim}{\mathrm{codim}}
\title[Cohomology of the moduli space of degree two Enriques surfaces]{Cohomology of the moduli space of degree two Enriques surfaces}
\author[M. Fortuna]{Mauro Fortuna}
\address{Institut für Algebraische Geometrie, Leibniz Universität Hannover, Welfengarten 1, 30167 Hannover, Germany.}
\email{fortuna@math.uni-hannover.de}
\begin{document}
\maketitle

\begin{abstract}
    We compute the intersection Betti numbers of the GIT model of the moduli space of numerically polarized Enriques surfaces of degree 2. The strategy of the cohomological calculation relies on a general method developed by Kirwan to compute the cohomology of GIT quotients of projective varieties, based on the equivariantly perfect stratification of the unstable points studied by Hesselink and others and a partial resolution of singularities, called Kirwan blow-up.
\end{abstract}

\section{Introduction}	

Moduli spaces of (numerically) polarized Enriques surfaces and their geometrically meaningful compactifications are a topic of increasing interest in algebraic geometry (cf. \cite{GH16}, \cite{CDGK18}, \cite{K20} and \cite[Chapter 5]{CDL20}). One of the most interesting aspects one wants to understand is the topology of these spaces. From that perspective, the present article provides the first result about the cohomology of a moduli space of (numerically) polarized Enriques surfaces. More precisely, the purpose of this paper is to compute the intersection Betti numbers of the moduli space of numerically polarized Enriques surfaces of degree 2. 

The projective model of degree 2 Enriques surfaces was firstly constructed by Horikawa in \cite{Hor781}. The K3 coverings of these Enriques surfaces are hyperelliptic quartic K3 surfaces given as double coverings of $\PP^1 \times \PP^1$ branched over a curve of bidegree $(4,4)$ invariant under a suitable involution $\iota: \PP^1 \times \PP^1 \rightarrow \PP^1 \times \PP^1 $ with four fixed points. This transformation induces the Enriques involution on the K3 surface. Therefore, by looking at the isomorphism classes of such branch curves on $\PP^1\times \PP^1$, we can construct the GIT quotient
$$ M^{GIT}:= \PP H^0(\PP^1\times \PP^1, \calO_{\PP^1\times \PP^1}(4, 4))^{\iota}/\!\!/(\CC^*)^2 \rtimes D_8, $$
where $\PP H^0(\PP^1\times \PP^1, \calO_{\PP^1\times \PP^1}(4, 4))^{\iota}$ is the linear subsystem of $|\calO_{\PP^1\times \PP^1}(4, 4)|$ of $\iota$-invariant curves and $(\CC^*)^2 \rtimes D_8$ is the subgroup of the automorphisms of $\PP^1 \times \PP^1$ that commute with $\iota$. The quotient $M^{GIT} $ can thus be seen as a compactification of the moduli space of numerically polarized Enriques surfaces of degree 2 (cf. Theorem \ref{thm:polarized}). The purpose of the present article is to compute the intersection cohomology of $M^{GIT}$. 

The space $M^{GIT} $ was extensively studied by Horikawa (\cite{Hor781} and \cite{Hor782}), Shah (\cite{Sha81}) and Sterk (\cite{Ste91} and \cite{Ste95}). More precisely, Horikawa proved the Torelli Theorem for Enriques surfaces and studied the period map (and its extension) from $M^{GIT}$ to the period domain of Enriques surfaces. Shah classified all the projective degenerations of Horikawa's model of Enriques surfaces. Later Sterk built on these results by dealing with compactifications of the period space of Enriques surfaces which are of geometric interest. In particular, he gave a description of the boundary in the Baily-Borel compactification of the period space in \cite{Ste91} and constructed a resolution of the period map via a new geometrically meaningful compactification, called Shah compactification. This space can be obtained as a double weighted blow-up of $M^{GIT}$ and its points include all the degenerations of Enriques surfaces classified in \cite{Sha81}. Moreover, in \cite{Ste95} the resolution of the period map was proved to factorize through a semi-toric compactification, obtained as normalized blow-up of the Baily-Borel compactification
along the closure of the divisor describing periods of Enriques surfaces with a `special' quasi-polarization (see Section \S \ref{sec:Horikawa} for the definition of `special' quasi-polarization). It seems interesting to study the cohomological differences between all the aforementioned compactifications. 

The strategy to compute the intersection Betti numbers of $ M^{GIT} $ relies on a general procedure developed by Kirwan to calculate the cohomology of GIT quotients (see \cite{Kir84}, \cite{Kir85}, \cite{Kir86}). The crucial step of that method consists of the construction of a partial desingularization $ M^K \rightarrow M^{GIT}$, known as \textit{Kirwan blow-up}, having only finite quotient singularities, obtained by successively blowing up the loci parametrizing strictly polystable points in the parameter space. Then one is to compute the Hilbert-Poincar\'{e} polynomial of $M^K$ and descend back to the GIT quotient $M^{GIT}$ using the \textit{Decomposition Theorem}. 

Examples of application of Kirwan's method are the topological descriptions of the moduli space of points on the projective line (\cite[\S 8]{MFK94}), of $ K3 $ surfaces of degree 2 (\cite{Kir88}) and of hypersurfaces in $ \PP^n $ (\cite{Kir89}), with explicit complete computations only in the case of plane curves up to degree $ 6 $, cubic and quartic surfaces. More recently, the procedure has been applied to compactifications of the moduli space of cubic threefolds in \cite{CMGHL19} and that of non-hyperelliptic curves of genus four in \cite{F18} .

Our result is summarised by the following:

	\begin{theorem}\label{thm:main}
		The intersection Betti numbers of $M^{GIT} $ and the Betti numbers of the Kirwan blow-up $ M^K $ are as follows:
		\begin{center}
			\begin{tabular}{r|ccccccccccc}
				$ i $&0&2&4&6&8&10&12&14&16&18&20\\ \hline \rule{0pt}{2.5ex}
				$\dim IH^i(M^{GIT}, \QQ)$ & 1& 1 & 2 & 2 & 3 & 3 & 3 & 2 & 2 & 1 & 1\\[1ex]
				$\dim H^i(M^K, \QQ)$& 1 & 4 & 8 & 13 & 18 & 20 & 18 & 13 & 8 & 4 & 1\\
			\end{tabular}
		\end{center}
		while all the odd degree (intersection) Betti numbers vanish.
	\end{theorem}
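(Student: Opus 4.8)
The plan is to follow Kirwan's program in four stages: a GIT stability analysis, the construction of the equivariantly perfect Hesselink--Kirwan--Ness stratification to compute the $G$-equivariant cohomology of the semistable locus, the Kirwan partial desingularization to read off the Betti numbers of $M^K$, and finally an application of the Decomposition Theorem to descend to $IH^*(M^{GIT})$. First I would carry out the stability analysis for the action of $G = (\CC^*)^2 \rtimes D_8$ on $X = \PP H^0(\PP^1\times\PP^1, \calO_{\PP^1\times\PP^1}(4,4))^\iota \cong \PP^{12}$. Since the finite group $D_8$ does not alter the numerical criterion on the identity component, the Hilbert--Mumford criterion reduces to the combinatorics of the weight polytope of the rank-two torus $T = (\CC^*)^2$ acting on the $13$-dimensional space of $\iota$-invariant $(4,4)$-forms. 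I would determine the semistable locus $X^{ss}$, the stable locus $X^s$, and---most importantly for the desingularization---the finite set of strictly polystable orbits together with their stabilizers $R \subseteq G$; Shah's classification of the degenerations of Horikawa's model should pin down exactly which $\iota$-invariant curves occur and identify the connected positive-dimensional stabilizers.

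Next I would build the HKN stratification $X = X^{ss} \sqcup \bigsqcup_{\beta \neq 0} S_\beta$ of the unstable locus, indexed by the finitely many rational one-parameter subgroups $\beta$ arising as closest-to-origin points of convex hulls of subsets of $T$-weights. For each $\beta$ I would compute the codimension $d(\beta) = \codim_X S_\beta$ and the equivariant Poincar\'e series $P_t^G(S_\beta)$, exploiting the retraction of $S_\beta$ onto a twisted product over the relevant parabolic and the resulting inductive structure. Because the stratification is equivariantly perfect, this yields
\[
P_t^G(X^{ss}) \;=\; P_t^G(X) \;-\; \sum_{\beta \neq 0} t^{2d(\beta)}\, P_t^G(S_\beta),
\]
where $P_t^G(X)$ is obtained from the projective bundle formula applied to $\PP(V) \times_G EG \to BG$ and from $P_t(BG)$. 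Throughout I would work at the torus level and extract the $D_8$-invariant part at the end, using that $H^*_G(\,\cdot\,;\QQ) \cong H^*_T(\,\cdot\,;\QQ)^{D_8}$ for rational coefficients, so that $P_t(BG) = P_t(BT)^{D_8}$ is computed by Molien's formula from the $D_8$-action on $H^*(BT;\QQ) = \QQ[\alpha_1,\alpha_2]$.

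I would then perform the Kirwan blow-up, successively blowing up $X^{ss}$ along the proper transforms of the loci $G\cdot Z_R^{ss}$ of strictly polystable points, in decreasing order of $\dim R$, until the $G$-action on the resulting $\widetilde{X}^{ss}$ has only finite stabilizers, so that $H^*(M^K;\QQ) \cong H^*_G(\widetilde{X}^{ss};\QQ)$. Kirwan's formulas express the change in equivariant cohomology at each step in terms of the cohomology of the exceptional loci, governed by the $R$-representation on the normal directions and by the semistable locus for the reductive quotient of $\Stab$; assembling these gives the Betti numbers of $M^K$. Finally, since $\pi\colon M^K \to M^{GIT}$ is a partial resolution and $M^K$ has only finite quotient singularities (hence $H^*(M^K;\QQ) = IH^*(M^K;\QQ)$), the Decomposition Theorem realizes $IH^*(M^{GIT};\QQ)$ as a direct summand of $H^*(M^K;\QQ)$, and subtracting the contributions supported over the blown-up strata produces the stated intersection Betti numbers; the vanishing in odd degrees follows since every contribution carries only even-degree cohomology.

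The main obstacle will be the combinatorial bookkeeping in the two middle stages: enumerating all the indices $\beta$ with their codimensions and equivariant Poincar\'e series, identifying precisely the strictly polystable loci and their stabilizers for the blow-up, and correctly assembling the blow-up correction terms. Interwoven with this is the need to track the $D_8$-action on the strata and on cohomology consistently at every step, so that invariants are taken compatibly; an arithmetic slip in any single stratum or exceptional contribution would propagate through the final subtraction, so the computation must be organized to allow internal consistency checks (for instance, Poincar\'e duality for $M^K$ and for $IH^*(M^{GIT})$, both visible in the symmetry of the tabulated Betti numbers).
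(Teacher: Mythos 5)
Your proposal is correct and follows essentially the same route as the paper: Shah's stability analysis, the HKKN equivariantly perfect stratification computed at the torus level with $D_8$-invariants extracted via Molien's formula, the Kirwan partial desingularization along the strictly polystable loci ordered by stabilizer dimension, and the Decomposition Theorem to descend from $H^*(M^K)$ to $IH^*(M^{GIT})$. The remaining work is exactly the combinatorial bookkeeping you identify, which the paper carries out stratum by stratum with the same consistency checks (Poincar\'e duality) you propose.
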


The structure of the paper reflects the steps of Kirwan's machinery. Section \S \ref{sec:Horikawa} is devoted to the description of Horikawa's model, which gives rise to Enriques surfaces with a non-special polarization of degree 2. In Section \S \ref{sec:background} we use this model to construct the moduli space $M^{GIT}$ as GIT quotient $X/\!\!/G$, which can be seen as a compactification of the moduli space of numerically polarized Enriques surfaces of degree 2 (cf. Theorem \ref{thm:polarized}). Moreover, the geometrical description of the semistable and stable loci are presented. In Section \S \ref{sec:strati}, we calculate the equivariant Hilbert-Poincar\'{e} polynomial of the semistable locus $X^{ss}$ in the parameter space of $(4,4)$ $\iota$-invariant curves (see Proposition \ref{prop:seriesss}). This is done by computing the \textit{Hesselink-Kempf-Kirwan-Ness (HKKN) stratification} of the unstable locus, naturally associated to the linear action of $ G $ on the parameter space $ X $, followed by an excision type argument. In Section \S \ref{sec:blow}, we explicitly construct the partial desingularization $ M^K \rightarrow M^{GIT} $, by blowing up three $ G $-invariant loci in the GIT boundary of $ M^{GIT} $, corresponding to strictly polystable curves (cf. Proposition \ref{prop:polystable}). Section \S \ref{sec:cohomology} is devoted to the computation of the rational Betti numbers of the Kirwan blow-up $ M^K $ (see Theorem \ref{thm:cohoblow}). Here the correction terms arising from the modification process $M^K \rightarrow M^{GIT}$ are divided into a main and an extra contribution: the former takes into account the geometry of the centres of the blow-ups and the latter the action of $G$ on the exceptional divisors. In the end, the intersection Betti numbers of $ M^{GIT} $ are computed in Section \S \ref{sec:intersection}, as an application of the \textit{Decomposition Theorem} (cf. \cite{BBD82}) to the blow-down operations at the level of parameter spaces (see Theorem \ref{thm:intM}).

\subsection*{Notation and conventions} We work over the field of complex numbers and all the cohomology and homology theories are taken with \textit{rational} coefficients. The intersection cohomology will be always considered with respect to the middle perversity (see \cite{KW06} for an excellent introduction). For any topological group $ G $, we will denote by $ G^0 $ the connected component of the identity in $ G $ and by $ \pi_0(G):=G/G^0 $ the finite group of connected components of $ G $. The universal classifying bundle of $ G $ will be denoted by $ EG\rightarrow BG $. If $ G $ acts on a topological space $ Y $, its equivariant cohomology (see \cite{AB83}) will be defined to be $ H^*_G(Y):=H^*(Y\times_G EG) $. The Hilbert-Poincar\'{e} series is denoted by 
$$ P_t(Y):=\sum_{i\geq 0}t^i \dim H^i(Y), $$
and analogously for the intersection and equivariant cohomological theories. If $ F $ is a finite group acting on a vector space $ A $, then $ A^F $ will indicate the subspace of elements in $ A $ fixed by $ F $.

\subsection*{Acknowledgements} I wish to thank my PhD advisor Klaus Hulek for his valuable suggestions and useful comments, and Giacomo Mezzedimi for so many helpful discussions. This work is partially supported by the DFG Grant Hu 337/7-1.

\section{Horikawa's model}\label{sec:Horikawa}
An Enriques surface is a smooth compact complex surface $S$ such that $H^1(\calO_S)=0$ and its canonical bundle $\omega_S$ is not trivial, but $\omega_S^{\otimes 2}\cong \calO_S$. The last condition implies the existence of an \'{e}tale double covering $T\rightarrow S$ and by surface classification $T$ is a K3 surface. Moreover, by definition every Enriques surface is algebraic, in particular $\mathrm{NS}(S)\cong H^2(S, \ZZ)$. The canonical class is the only torsion element in the N\'{e}ron-Severi group and there is a non-canonical splitting $H^2(S, \ZZ)=H^2(S, \ZZ)_f \oplus \zz$ where $H^2(S, \ZZ)_f=H^2(S, \ZZ)/\mathrm{torsion}$ is a free module of rank 10. The intersection product endows this with a lattice structure and 
$$ H^2(S, \ZZ)_f = \mathrm{Num}(S) \cong U\oplus E_8(-1), $$
where $U$ denotes the hyperbolic plane and $E_8(-1)$ is the only negative definite, even, unimodular lattice of rank 8. 

A polarized (resp. numerically polarized) Enriques surface is a pair $(S, H)$, where $S$ is an Enriques surface and $H\in \mathrm{NS}(S)$ (resp. $H\in \mathrm{Num}(S)$) is the (numerical) class of an ample line bundle. Moreover, a quasi-polarization is a nef and big line bundle, not necessarily ample. The degree of a (numerical) (quasi-)polarization is its self-intersection and it is always even by adjunction. 

In the present article we consider only quasi-polarizations of degree 2. By \cite[Remark 5.7.10]{CDL20}, each numerical quasi-polarization of degree 2 can be represented as a sum of two isotropic classes $f+g$ in $U\oplus E_8(-1)$, with one of the following properties: 
\begin{enumerate}[(i)]
    \item Both $f$ and $g$ are nef: in this case $f+g$ is ample and is called \textit{non-special} polarization;
    \item The class $f-g$ represents an effective divisor $R$ with $R^2=-2$, $Rf=-1$, and hence $f=g+R$ is not nef: in this case $f+g$ is not ample and is called \textit{special} quasi-polarization.
\end{enumerate}

We now present a geometrical construction of Enriques surfaces together with a numerical polarization of degree 2, firstly given by Horikawa in \cite{Hor781} (cf. also \cite[V.23]{BHPV04}). Let $\PP^1 \times \PP^1$ be acted on by the involution:
$$ \iota:\PP^1\times \PP^1 \rightarrow \PP^1 \times \PP^1 $$
$$ (x_0:x_1, y_0: y_1)\mapsto (x_0:-x_1, y_0:-y_1). $$
It has four isolated fixed points, namely 
$$ \Delta:=\{ (0:1, 0:1), \ (0:1, 1:0), \ (1:0, 1:0), \ (1:0, 0:1) \}. $$
Let $B$ be a reduced curve on $\PP^1 \times \PP^1$ of bidegree $(4,4)$ which is invariant under $\iota$, does not pass through any point of $\Delta$ and has at worst simple singularities. The minimal resolution of the double covering of $\PP^1 \times \PP^1$ branched over $B$ is a K3 surface $T\rightarrow \PP^1 \times \PP^1$. The pullback of the $(1,1)$-class on $\PP^1\times \PP^1$ endows $T$ with a polarization of degree 4, which splits as a sum of two genus one fibrations corresponding to the pullbacks of the two rulings on $\PP^1 \times \PP^1$. Moreover, the involution $\iota$ on $\PP^1 \times \PP^1$, composed with the deck transformation of the double covering, induces a fixed point free involution $\sigma$ on $T$. Therefore the quotient $T \rightarrow S:= T/\langle \sigma \rangle$ is an Enriques surface. As the degree 4 polarization on $T$ is invariant under $\sigma$, it induces a polarization $L$ of degree 2 on the Enriques surface $S$. This ample line bundle on $S$ splits as a sum $L=E+F$ of two half pencils of elliptic curves with $E^2=F^2=0$ and $EF=1$, where $E$ and $F$ come from the two rulings of $\PP^1 \times \PP^1$. The linear system $|2L|$ maps $S$ to a quartic del Pezzo surface $D\subset \PP^4$ with four $A_1$ singularities, which coincides with the quotient $\PP^1 \times \PP^1 / \langle \iota \rangle$. We notice that the image of the branch curve $B\subset \PP^1 \times \PP^1$ under the quotient map is cut out on $D$ by a quadric, hence $S$ can be also viewed as a double covering of a 4-nodal quartic del Pezzo surface branched over a quadric section. Summarising we have the commutative diagram:
$$ \begin{tikzcd}
T \arrow{r}{2:1} \arrow{d}{/ \langle \sigma \rangle} & \PP^1 \times \PP^1 \arrow{d}{/ \langle \iota \rangle}\\
S \arrow{r}{|2L|} & D
\end{tikzcd} $$
In \cite{Hor781} Horikawa proved that a general Enriques surface admits a non-special polarization of degree 2, namely:
\begin{theorem}\cite[Theorem 4.1]{Hor781} \cite[Proposition VIII 18.1]{BHPV04}
Let $S$ be a general Enriques surface. Then there exists a $\iota$-invariant $(4,4)$-curve $B$ on $\PP^1 \times \PP^1$ such that the universal covering $T$ of $S$ is the
minimal resolution of the double covering of $\PP^1 \times \PP^1$ ramified over $B$. The curve $B$ is reduced with at worst simple singularities and does not contain any fixed point of $\iota$. The Enriques involution on $T$ is induced by the involution $\iota$ on $\PP^1\times \PP^1$.
\end{theorem}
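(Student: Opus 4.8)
The plan is to run the construction of Section~\ref{sec:Horikawa} backwards, starting from the K3 cover of a general Enriques surface and recovering the $\iota$-invariant $(4,4)$-curve. Write $\pi\colon T\to S$ for the K3 double cover and $\sigma$ for the fixed-point-free Enriques involution, so that $S=T/\langle\sigma\rangle$ and $\pi_*\calO_T=\calO_S\oplus\omega_S$. The first step is to produce the polarization. Since $\mathrm{Num}(S)\cong U\oplus E_8(-1)$, I would fix a standard basis $e,f$ of the hyperbolic summand $U$ and set $E:=e$, $F:=f$, two isotropic classes with $E\cdot F=1$, and $L:=E+F$, a class of degree $L^2=2$. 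The key point, and where genericity enters, is that for a general $S$ the classes $E,F$ can be taken nef, so that $L$ is an honest ample non-special polarization in the sense of case (i) of the dichotomy recalled above; this is arranged by controlling the $(-2)$-classes of $\mathrm{Num}(S)$ and moving $E,F$ into the nef cone by the Weyl group action, the special case (ii) being confined to a divisor in the period space.

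Next I would pass to the two elliptic pencils. The half-pencils $E$ and $F$ give genus-one fibrations $|2E|,|2F|$ on $S$, and their pullbacks $\widetilde E:=\pi^*E$, $\widetilde F:=\pi^*F$ are $\sigma$-invariant nef classes on $T$ with $\widetilde E^2=\widetilde F^2=0$ and $\widetilde E\cdot\widetilde F=2(E\cdot F)=2$; each defines an elliptic fibration $T\to\PP^1$ (note $h^0(T,\pi^*E)=h^0(S,E)+h^0(S,E+K_S)=2$ via $\pi_*\calO_T=\calO_S\oplus\omega_S$). Taking the product gives a morphism $\psi\colon T\to\PP^1\times\PP^1$. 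A general point of $\PP^1\times\PP^1$ has $\widetilde E\cdot\widetilde F=2$ preimages, so $\psi$ is generically two-to-one, and $\psi^*\calO(1,1)=\pi^*L$ has $(\pi^*L)^2=2L^2=4$. Since $K_T=0$ and $K_{\PP^1\times\PP^1}=\calO(-2,-2)$, the double-cover formula $K_T=\psi^*\!\bigl(K_{\PP^1\times\PP^1}+\tfrac{1}{2}B\bigr)$ forces the branch curve to satisfy $B\in|\calO(4,4)|$, i.e. to have bidegree $(4,4)$.

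It then remains to identify the involution and to control $B$. Because $\widetilde E$ and $\widetilde F$ are $\sigma$-invariant, $\sigma$ preserves each of the two fibrations and hence descends to an involution of $\PP^1\times\PP^1$ of product form $(\alpha,\beta)$, with each $\alpha,\beta$ a nontrivial involution of $\PP^1$; after a coordinate change on each factor this is exactly $\iota$, and consistently $|2L|$ realizes the quotient $(\PP^1\times\PP^1)/\langle\iota\rangle$ as the four-nodal quartic del Pezzo $D$. The fixed locus of any lift of $\iota$ to $T$ maps into $\mathrm{Fix}(\iota)=\Delta$, so the freeness of $\sigma$ is equivalent to $B\cap\Delta=\emptyset$ (with the correct lift chosen so that the two sheets are exchanged over each of the four points of $\Delta$), and this is forced by the smoothness of $S$. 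Finally, since $T$ is a K3 surface obtained as the minimal resolution of the double cover, its singularities over $B$ are at worst rational double points, which is equivalent to $B$ being reduced with at worst simple (ADE) singularities.

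The main obstacle is the first step: the genericity statement that a general Enriques surface carries a non-special degree-$2$ polarization whose associated branch curve is reduced, avoids $\Delta$, and has only simple singularities. This is not a formal consequence of the lattice computation, since one must rule out, for general $S$, the $(-2)$-curves forcing case (ii) of the dichotomy and must verify by a Saint-Donat-type analysis of $|2L|$ that the map to $D$ behaves as described and that the quadric section cutting out $B$ meets $\PP^1\times\PP^1$ in a curve with only simple singularities disjoint from $\Delta$. I expect this to rely essentially on the surjectivity of the period map and the Global Torelli theorem for Enriques surfaces, together with the description of the nef cone of a general $S$, guaranteeing that the generic point of the moduli space indeed falls into the non-special case.
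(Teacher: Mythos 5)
The paper offers no proof of this statement—it is quoted directly from Horikawa and from \cite[Prop.\ VIII 18.1]{BHPV04}—and your argument is essentially the standard one from those sources, namely the reverse of the construction in Section \ref{sec:Horikawa}: take two half-pencils $E,F$ with $E\cdot F=1$ on a general (hence unnodal) $S$, use the pulled-back elliptic pencils to map $T$ two-to-one onto $\PP^1\times\PP^1$, read off the bidegree $(4,4)$ from the canonical bundle formula, and deduce $B\cap\Delta=\varnothing$ and the ADE condition from the freeness of $\sigma$ and the rationality of the double points. The inputs you defer (genericity forcing the non-special case, nontriviality of the induced involutions on the two base $\PP^1$'s via the half-fiber branch points, and the behaviour of $|2L|$) are exactly what the cited references supply, so the sketch is sound and takes the same route.
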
 

To obtain a representation of all Enriques surfaces, one still needs to treat the \textit{special} case (see \cite[Theorem 4.2]{Hor781} and \cite[Proposition VIII 18.2]{BHPV04}). In a similar way as above, one can construct an Enriques surface from a quadric cone in $\PP^3$ together with an involution. Indeed, the minimal resolution of the double covering of the cone branched over a curve cut out by a quartic polynomial is a K3 surface. The involution on the cone induces a fixed point free involution on the K3 surface, whose quotient is an Enriques surface. The hyperplane class of the cone induces a quasi-polarization $L$ of degree 2 on the Enriques surface, which splits as a sum $L=2E+R$, where $E$ is a half pencil of elliptic curves and $R$ is a $(-2)$-curve with $ER=1$. Notice that this degree 2 line bundle is big and nef, but not ample, as it is orthogonal to the class of $R$ coming from the resolution of the vertex of the cone.

By \cite{Hor781} every Enriques surface admits a special quasi-polarization or a non-special polarization of degree 2. In the present article, we will consider only the non-special polarization, as the general Enriques surface can be endowed with it.

\section{Background on GIT for degree two Enriques surfaces}
\label{sec:background}

Via Horikawa's model, one can construct a GIT compactification of the moduli space of non-special Enriques surfaces of degree 2 by looking at the isomorphism classes of branch curves $B$ on $\PP^1\times \PP^1$. The $ \iota $-invariant polynomials of bidegree $ (4,4) $ form a $ 13 $-dimensional vector space with a basis consisting of
$$ x_0^ix_1^{4-i}y_0^jy_1^{4-j} \ \mathrm{for} \ i+j\equiv 0 \ \mathrm{mod} \ 2,$$
which is explicitly
$$ x_0^{2k}x_1^{4-2k}y_0^{2l}y_1^{4-2l}, \ 0\leq k, l \leq 2, $$
$$ x_0^3x_1y_0^3y_1, \ x_0x_1^3y_0^3y_1, \ x_0^3x_1y_0y_1^3, \ x_0x_1^3y_0y_1^3. $$
We denote the corresponding linear system on $\PP^1 \times \PP^1$ by $ X:=\PP H^0(\PP^1\times \PP^1, \calO_{\PP^1\times \PP^1}(4,4))^{\iota}\cong \PP^{12} $.
	
Let $ G $ be the subgroup of the automorphism group of $ \PP^1 \times \PP^1 $, commuting with the involution $ \iota $ or, equivalently, fixing the set $ \Delta $. The group has dimension 2 and has the structure of semidirect product
$$ G=(\CC^*)^2\rtimes D_8, $$	
where $ D_8 $ is the dihedral group of symmetries of the square. The group $ D_8 =(\zz \times \zz) \ltimes \zz $ acts on $(\CC^*)^2$ as follows: the first two involutions act via inversion on every factor of the torus, while the third interchanges the two factors.
	
In \cite{Sha81} Shah describes explicitly the group $G< \Aut (\PP^1 \times \PP^1)$ and its action on $X$ in the following way. Let $ I_1 $ be the involution on $\PP^1$ which keeps $ x_0 $ fixed and sends $ x_1\mapsto -x_1 $, and let $ I_2 $ be the involution of $\PP^1$ which keeps $ y_0 $ fixed and sends $ y_1 \mapsto -y_1$. Let $ \gamma $ denote the automorphism of $ \PP^1 \times \PP^1 $ which interchanges the factors; let $ \langle \gamma \rangle $ be the group generated by $ \gamma $. For $ i=1,2 $ let $ G_i $ be the subgroup of $ \PGL(2, \CC) $ which commutes with $ I_i $: $ G_i $ is the stabiliser of the set of fixed points of $ I_i $. Then, $ G_1 $ and $ G_2 $ are isomorphic to the semidirect product $ \CC^* \rtimes \zz $, where $ \CC^* $ acts via the transformations
$$ (x_0: x_1) \mapsto (ax_0:a^{-1}x_1), \quad a\in \CC^*$$
$$ (y_0: y_1) \mapsto (by_0:b^{-1}y_1), \quad b\in \CC^* $$
The subgroup $ \zz $ is generated by the involution which interchanges $ x_0$ and $ x_1 $ as an element of $ G_1 $, and interchanges $ y_0$ and $ y_1 $ as an element of $ G_2 $. The group $ G $ is therefore isomorphic to
$$ G \cong (G_1\times G_2) \rtimes \langle \gamma \rangle. $$
	
We are now ready to construct the relevant moduli space of degree 2 Enriques surfaces. Geometric Invariant Theory  (\cite{MFK94}) provides a good categorical projective quotient
$$ M^{GIT}:=X/\!\!/ G, $$
which can be thought of as a compactification of the moduli space of non-special Enriques surfaces of degree 2. Via Horikawa's model of Section \S \ref{sec:Horikawa}, one can equivalently construct the same quotient by considering the linear system of quadric sections $|\calO_D(2)|\cong \PP^{12}$ on a 4-nodal del Pezzo surface $D \subset \PP^4$ modulo the action of the automorphism group of $D$, which is again isomorphic to $G$. 

By a lattice theoretical result \cite[Corollary 1.5.4.]{CDL20}, the non-special polarization of degree 2 constructed by Horikawa is the unique numerical polarization of degree 2 on an Enriques surface, up to an isometry of the Enriques lattice. Indeed, it is defined only up to numerical equivalence, since it is induced by an ample line bundle on the K3 covering. Therefore we have:
\begin{theorem} \cite[Theorem 5.8.5]{CDL20} \label{thm:polarized}
The GIT quotient $M^{GIT}$ is a compactification of the moduli space of numerically polarized Enriques surfaces of degree 2 and it is rational.
\end{theorem}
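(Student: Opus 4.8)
The plan is to prove the two assertions separately: that $M^{GIT}$ is a projective compactification whose interior is the moduli space of numerically polarized degree-$2$ Enriques surfaces, and that it is a rational variety. For the first, I would begin by recording that $M^{GIT}=X/\!\!/G$ is projective, being the GIT quotient of $X\cong\PP^{12}$ by the reductive group $G=(\CC^*)^2\rtimes D_8$ with respect to the natural ample linearization; in particular it is compact. It then remains to identify a dense open subset with the relevant moduli set. I would take the locus $X^{\circ}\subset X$ of curves $B$ that are reduced, have at worst simple singularities and avoid $\Delta$. By the Horikawa theorem quoted above, the double-cover construction attaches to each such $B$ an Enriques surface $S$ with a non-special degree-$2$ numerical polarization, and a general Enriques surface arises this way; combined with \cite[Corollary 1.5.4]{CDL20}, according to which the degree-$2$ numerical polarization is unique up to an isometry of $\mathrm{Num}(S)$, this shows the construction surjects onto the moduli set of numerically polarized degree-$2$ Enriques surfaces.

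The core of the moduli interpretation is the bijection between $G$-orbits on $X^{\circ}$ and isomorphism classes of numerically polarized pairs $(S,H)$. That $G$-equivalent curves yield isomorphic pairs is immediate, since $G\subset\Aut(\PP^1\times\PP^1,\iota)$ carries the whole double-cover construction to an isomorphic one. For the converse, I would exploit the commutative diagram of Section \S\ref{sec:Horikawa}: the degree-$2$ map $|2L|\colon S\to D$ onto the $4$-nodal del Pezzo $D=\PP^1\times\PP^1/\langle\iota\rangle$ is canonically attached to $(S,H)$ and realises $B$ as its branch quadric section. Hence an isomorphism $(S,H)\cong(S',H')$ descends to an automorphism of $D$ preserving this branch data, which lifts to an automorphism of $\PP^1\times\PP^1$ commuting with $\iota$, i.e. to an element of $G$ carrying $B$ to $B'$. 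The Torelli theorem for Enriques surfaces, together with \cite[Corollary 1.5.4]{CDL20}, ensures that every isomorphism of the underlying surfaces can be promoted to one respecting the (essentially unique) numerical polarization, so that $X^{\circ}/G$ is exactly the coarse moduli space and $M^{GIT}$ is its compactification.

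For rationality it suffices to show that the invariant field $\CC(X)^{G}$ is purely transcendental over $\CC$, and I would compute it in the two steps $\CC(X)^{G}=\bigl(\CC(X)^{G^{0}}\bigr)^{D_8}$ with $G^{0}=(\CC^*)^2$. Since $G^{0}$ acts linearly on $X=\PP(V)$, where $V=H^0(\PP^1\times\PP^1,\calO(4,4))^{\iota}$, with weights spanning a finite-index sublattice of the character lattice (as one reads off from the explicit monomial basis, whose weights are $(4k-4,4l-4)$ and $(\pm2,\pm2)$), the quotient $X/\!\!/G^{0}$ is a projective toric variety, so $\CC(X)^{G^{0}}$ is rational of transcendence degree $10$. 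It then remains to show that the residual $D_8$-action on this field has purely transcendental invariants. For this I would use the torus to fix an explicit rational slice (normalising two coefficients to $1$), compute the induced $D_8$-action — which, since $D_8$ acts by permutations of $x_0,x_1,y_0,y_1$, is by monomial transformations of the slice coordinates — and exhibit an explicit transcendence basis of invariants. This finite-quotient step is the main obstacle, since quotients of rational fields by finite groups need not be rational in general; here it is rendered tractable only by the concrete monomial form of the action and would be settled by a direct computation.
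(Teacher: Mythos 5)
The paper does not actually prove this statement: it is imported verbatim from \cite[Theorem 5.8.5]{CDL20}, and the only supporting text in the paper is the preceding remark that, by \cite[Corollary 1.5.4]{CDL20}, the non-special degree-2 polarization is the unique numerical polarization of degree 2 up to isometry of the Enriques lattice --- which is exactly why the GIT quotient of Horikawa's family compactifies the moduli space of \emph{numerically} polarized surfaces rather than some proper subvariety of it. Your reconstruction of the moduli interpretation is in the right spirit and assembles the right ingredients (Horikawa's theorem, the canonicity of $|2L|\colon S\to D$ for the numerical class, $\Aut(D)\cong G$, and the lattice-theoretic uniqueness), so that half is a reasonable sketch of what the cited reference does.

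The rationality half, however, contains a genuine gap that you yourself flag. Reducing to $\CC(X)^{G}=\bigl(\CC(X)^{(\CC^*)^2}\bigr)^{D_8}$ and observing that the torus quotient is toric (hence rational of transcendence degree $10$) is fine, but rationality is not inherited by invariants of a finite group: this is precisely the Noether problem, and there are counterexamples even for abelian groups acting on purely transcendental extensions. Asserting that the $D_8$-step ``would be settled by a direct computation'' without exhibiting a transcendence basis of invariants --- or invoking a structural result such as the no-name lemma applied to a suitable $D_8$-linearized slice, or a known rationality criterion for monomial actions of small groups --- leaves the second assertion of the theorem unproved. Since the paper itself outsources the entire statement to \cite{CDL20}, the honest fix is either to cite that reference for rationality as well, or to actually carry out the invariant-field computation you defer.
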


We aim at computing the intersection Betti numbers of $M^{GIT}$. We recall that intersection cohomology satisfies Poincar\'{e} duality, allowing us to compute the Betti numbers up to dimension $10=\dim M^{GIT}$. Hence we will report the results $mod \ t^{11}$ for the sake of readability. Nevertheless, we prefer to carry out the computations in all dimensions as a good way to double-check the calculations.

In order to find the intersection cohomology of $M^{GIT}$, we need to study the semistability conditions for the branch curves in $X$. We refer to \cite{MFK94} for the standard definitions of stability, semistability and polystability. In our case this description is provided by the following results of Shah \cite{Sha81}, which in turn come from the Hilbert-Mumford criterion (\cite{MFK94}). Here the four coordinate lines $ x_0=0 $, $ x_1=0 $, $ y_0=0 $, $ y_1=0 $ in $\PP^1 \times \PP^1$ are called \textit{edges}.

\begin{prop}\cite[Proposition 5.1.]{Sha81} 
A curve in $X$ is not semistable under the action of $G$ if and only if either it has a point of multiplicity greater than 4 (which must necessarily be in $\Delta$) or it has a quadruple point in $\Delta$ with an edge as a tangent of multiplicity greater than 3 at that point.
\end{prop}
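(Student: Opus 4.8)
The natural tool is the Hilbert--Mumford numerical criterion (see \cite{MFK94}), so the plan is to test one-parameter subgroups (1-PS) of $G$ against the linear action on $X=\PP H^0(\PP^1\times\PP^1,\calO(4,4))^\iota$. First I would reduce to 1-PS of the maximal torus $T=(\CC^*)^2$. Since $T=G^0$ is the connected component, every 1-PS of $G$ has connected image and hence factors through $T$; moreover the finite group $\pi_0(G)=D_8$ acts on the cocharacter lattice $X_*(T)\cong\ZZ^2$ through the symmetries of the square, so it is enough to test 1-PS $\lambda=(r,s)$ lying in one fundamental cone and then spread the conclusion around by the $D_8$-symmetry. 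Writing the defining section as $F=\sum c_{ij}\,x_0^i x_1^{4-i}y_0^j y_1^{4-j}$, the weight of the monomial indexed by $(i,j)\in[0,4]^2$ under $\lambda=(r,s)$ is $r(4-2i)+s(4-2j)$, a linear function of the exponent.

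The cleanest reformulation, which avoids tracking signs, is in terms of the weight polytope. Identifying each monomial with its exponent $(i,j)$, Mumford's criterion for a torus says that $B=\{F=0\}$ is $T$-semistable if and only if the centre $(2,2)$ of the exponent box lies in the convex hull $\mathrm{Conv}\{(i,j):c_{ij}\neq0\}$ of the support of $F$. Equivalently, $B$ is \emph{not} semistable precisely when there is a line through $(2,2)$ having the whole support strictly on one side, and such a line is exactly the datum of a destabilising $\lambda$. Because all 1-PS lie in $T$ and the weight does not depend on the ambient group, $G$-semistability and $T$-semistability define the same locus in $X$ (the finite group $D_8$, which fixes $(2,2)$ and permutes the lattice, affects only the quotient), so this polytope condition already characterises non-semistability.

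It then remains to enumerate the maximal \emph{unstable} configurations and translate them into geometry. The key simplification is that the twelve non-central exponents allowed by $\iota$-invariance (those with $i\equiv j \bmod 2$) all lie on the two coordinate axes and the two diagonals through $(2,2)$; hence only finitely many separating directions occur, and up to the $D_8$-symmetry they collapse to a single maximal stratum together with its special sub-cases. The diagonal direction $\lambda=(1,1)$ destabilises exactly those $F$ whose support satisfies $i+j\le 2$, which in local coordinates at the torus-fixed point $(1:0,1:0)$ means $B$ has multiplicity $\ge 6>4$ there. An off-diagonal direction such as $\lambda=(2,1)$ destabilises exactly those $F$ with $2i+j<6$; reading this at $(1:0,1:0)$ yields a quadruple point at which an edge enters the tangent cone to the order prescribed in the statement. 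Matching these numerical thresholds with Shah's multiplicity count, and spreading the four fixed points of $\Delta$ and the four edges around by $D_8$, produces exactly the two listed possibilities.

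The part I expect to be most delicate is precisely this last translation from the weight inequalities to the geometric statement, together with the claim that a point of multiplicity $>4$ must lie in $\Delta$. For the latter I would argue directly: if $p$ has multiplicity $m\ge5$, then each ruling fibre through $p$ meets the $(4,4)$-curve $B$ with local intersection multiplicity $\ge m>4$, exceeding the global intersection number $4$, so both fibres are components of $B$; away from $\Delta$ the $\iota$-invariance of $B$ and its bidegree force the residual $(2,2)$-curve to contribute multiplicity at most $2$ at $p$, capping the total at $4$, whereas at a fixed point the two fibres are \emph{edges} that can absorb the full degree, so $p\in\Delta$. Carefully verifying that the inequalities coming from the finitely many $\lambda$ exhaust exactly these two cases---and no others---is the real content, the remaining convexity and local-coordinate computations being routine.
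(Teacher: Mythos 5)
The paper offers no proof of this proposition --- it is quoted from Shah --- so your reconstruction can only be judged on its own terms. Your framework is the right one, and it is the same one the paper itself uses for its Hilbert diagram in Section 4: every one-parameter subgroup of $G$ lands in $G^0=(\CC^*)^2$, $G$- and $G^0$-semistability agree because $D_8$ is finite, and instability is exactly the condition that the support of $F$ lies strictly on one side of a line through the centre $(2,2)$ of the exponent square. Up to $D_8$ there is indeed a unique maximal one-sided configuration, namely the six exponents $\{(0,0),(0,2),(0,4),(1,1),(1,3),(2,4)\}$ cut out by a direction in the open cone between an axis and a diagonal (this is the stratum $\bar{Y}_{\beta}\cong\PP^5$ with $\beta=(6/5,-2/5)$ in the paper's Table 1; every other $\bar{Y}_{\beta}$ lies in the $D_8$-orbit of this one). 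The gap is precisely the step you flag as delicate: the translation into the stated geometry, which you assert rather than verify, and which as asserted does not check out. For generic $F$ supported on that six-element set, the local equation at $(1:0,0:1)$ in coordinates $u=x_1/x_0$, $v=y_0/y_1$ is $c_{00}u^4+c_{11}u^3v+(\text{higher order})$: the point has multiplicity exactly $4$, the edge $u=0$ occurs in the tangent cone with multiplicity exactly $3$ --- not greater than $3$ --- and no point of the curve has multiplicity $\geq 5$. So under the naive reading of ``a tangent of multiplicity greater than $3$'' the generic maximal unstable curve satisfies neither listed condition. What the destabilising inequality actually encodes is that the edge is a double component and the residual $(2,4)$-curve meets it only at that one point of $\Delta$; matching this with Shah's phrase requires his precise (non-obvious) definition of the multiplicity of a tangent, and your proof never supplies that dictionary.

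The second gap is your argument that a point of multiplicity $>4$ must lie in $\Delta$. Your bidegree count tacitly assumes the two ruling fibres through $p$ and their $\iota$-images form a reduced $(2,2)$-curve, leaving a $(2,2)$-residual of multiplicity at most $2$; this breaks down when $p$ lies on an edge but outside $\Delta$, since that fibre is $\iota$-invariant and may occur with multiplicity up to $4$. Concretely, $F=x_1^4\,q_1(y_0,y_1)\,q_2(y_0,y_1)$ with $q_i=\alpha_i y_0^2+\gamma_i y_1^2$ generic is an $\iota$-invariant $(4,4)$-curve, unstable (its support is $\{(0,0),(0,2),(0,4)\}$), with points of multiplicity $5$ on the edge $x_1=0$ away from $\Delta$. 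So the parenthetical cannot be proved the way you propose and, read literally, needs reinterpretation (that curve is still unstable via the quadruple points of $\Delta$ on the edge, so the biconditional can survive, but not your argument for it). In short: the convexity reduction is correct and routine, but the ``real content'' you correctly identify --- the exact match between the eight maximal linear strata and the two geometric conditions --- is missing, and the two concrete claims you do make about it do not survive direct computation.
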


\begin{prop}\cite[Proposition 5.2.(a)]{Sha81}
A curve in $X$ is strictly semistable (i.e. semistable, but not stable) under the action of $G$ if and only if either it has an edge as a component with multiplicity 2 or it has a quadruple point in $\Delta$.
\end{prop}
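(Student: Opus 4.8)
The plan is to run the Hilbert--Mumford numerical criterion \cite{MFK94} directly and read off the boundary case. Recall that a point is stable (resp. semistable) if and only if $\mu(B,\lambda)>0$ (resp. $\mu(B,\lambda)\geq 0$) for every nontrivial one-parameter subgroup $\lambda$ of $G$; consequently a semistable curve $B$ is \emph{strictly} semistable precisely when there exists a nontrivial $\lambda$ with $\mu(B,\lambda)=0$. Since every one-parameter subgroup is conjugate into a maximal torus and the connected component $(\CC^*)^2$ is itself a maximal torus normalised by $D_8$, I would first reduce to one-parameter subgroups of $(\CC^*)^2$, tracking the remaining freedom as the action of the finite group $D_8$ on the weight data. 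Thus $B$ is $G$-semistable iff each $D_8$-translate is $(\CC^*)^2$-semistable, and $B$ is strictly semistable iff it is semistable and, in some $D_8$-frame, the torus weight $\mu$ vanishes for a nontrivial $\lambda$.

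The key reformulation is combinatorial. A one-parameter subgroup $\lambda=(p,q)$ of $(\CC^*)^2$ acts on the monomial $x_0^a x_1^{4-a}y_0^b y_1^{4-b}$ with weight $2p(a-2)+2q(b-2)$, so to a curve $B$ with defining polynomial supported on a set $N(B)$ of exponents $(a,b)$ I would associate its weight polytope $\mathrm{Conv}\{(a-2,b-2):(a,b)\in N(B)\}$ in the character plane, with the action linearised so that the distinguished centre is the origin. Under the sign convention $\mu(B,\lambda)=-\min\{\,\text{weights of monomials of }B\,\}$, semistability for the torus is equivalent to the origin lying in this weight polytope, stability to its lying in the interior, and hence strict semistability (for the torus, in a fixed $D_8$-frame) to the origin lying on the boundary of the polytope, i.e.\ to the existence of a supporting line through the centre $(2,2)$.

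The heart of the argument is then the enumeration, up to the $D_8$-symmetry of the $4\times 4$ grid of exponents, of the supporting lines through $(2,2)$ and the translation of each into geometric language. I expect the analysis to split into exactly two families. If the supporting line is axis-parallel, say $\{a-2=0\}$, then every monomial satisfies $a\geq 2$, which means $x_0^2$ divides the defining polynomial; together with semistability (forcing monomials with $a=2$ whose $y$-exponents straddle the centre) this is precisely the condition that the edge $x_0=0$ occurs as a component of multiplicity $2$, the four edges being permuted by $D_8$. If instead the supporting line is diagonal, say $\{(a-2)+(b-2)=0\}$, then every monomial satisfies $a+b\leq 4$, and computing the local multiplicity of $B$ at the corresponding point of $\Delta$ as $\min\{8-a-b\}$ shows this is exactly a quadruple point at that point of $\Delta$; again $D_8$ permutes the four points of $\Delta$ and the four diagonal directions. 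One then checks the converse by exhibiting, for each of the two geometric conditions, the explicit $\lambda$ realising $\mu=0$ while semistability persists, and confirms consistency with the non-semistable locus of Proposition~\ref{prop:seriesss} (a point with a diagonal supporting line that also violates semistability would correspond to multiplicity strictly greater than $4$).

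The main obstacle is ensuring completeness and non-redundancy of the case analysis: one must verify that no supporting line of intermediate (non-axis-parallel, non-diagonal) rational slope through $(2,2)$ can bound a semistable weight polytope, so that the dichotomy above is exhaustive. A secondary subtlety is that the stable/strictly-semistable distinction is governed purely by the torus: a curve may have a nontrivial \emph{finite} stabiliser arising from $D_8$ and still be stable, so I would take care that such finite stabilisers play no role in the vanishing-$\mu$ criterion and do not spuriously produce boundary cases.
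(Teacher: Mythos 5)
The paper offers no proof of this statement: it is quoted directly from \cite{Sha81} (Proposition 5.2(a)), with only the remark that Shah's classification comes from the Hilbert--Mumford criterion. Your proposal is therefore a reconstruction of Shah's argument rather than a competitor to one in the paper, and the reconstruction is sound. The reduction to the torus is in fact cleaner than you make it: since $(\CC^*)^2=G^0$ is normal in $G$ and every one-parameter subgroup of $G$ lies in $G^0$, the set of one-parameter subgroups of $G$ is exactly that of the torus and is merely permuted by $D_8$-conjugation, so $G$-(semi)stability coincides with $(\CC^*)^2$-(semi)stability outright; for the same reason your worry about finite stabilisers coming from $D_8$ is vacuous.

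The step you flag as the main obstacle --- ruling out supporting lines of intermediate slope --- closes immediately from the shape of the weight set. In your coordinates $(u,v)=(a-2,b-2)$ the thirteen weights are $(0,0)$, $(\pm2,0)$, $(0,\pm2)$, $(\pm1,\pm1)$, $(\pm2,\pm2)$: every nonzero weight lies on one of the four lines $u=0$, $v=0$, $u\pm v=0$, i.e.\ in one of eight directions spaced $45^{\circ}$ apart. Hence if the origin lies in the relative interior of an edge of the weight polytope, that edge joins two weights collinear with the origin and therefore lies on one of the four special lines; and if the origin is a vertex, its tangent cone is salient and spanned by directions that are multiples of $45^{\circ}$, so it opens through at most $135^{\circ}$, and the resulting closed arc of supporting directions, of length at least $45^{\circ}$, must contain one of the four special directions. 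With exhaustiveness settled, your translation of the two $D_8$-orbits of half-planes is correct, and semistability automatically sharpens the inequalities: an axis-parallel supporting line forces the edge multiplicity to be exactly $2$ (multiplicity $\geq 3$ pushes the polytope off the origin), and a diagonal one forces the multiplicity at the corresponding point of $\Delta$ to be exactly $4$, consistent with Shah's instability criterion. So the proof goes through as proposed once this elementary convex-geometry check is written out.
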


\begin{prop}\cite[Proposition 5.2.(b)]{Sha81}\label{prop:polystable}
The strictly polystable curves in $ X $ under the action of $ G $ fall into three categories:
\begin{enumerate}[(i)]
	\item Unions of two skew double edges and the components of the residual curve are mutually disjoint lines, none of which is an edge (see for example Figure \ref{fig:spscurves}(A)). 
	\item Unions of four $ \iota $-invariant curves of bidegree $ (1,1) $, each of which passes through two quadruple points in $ \Delta $. Moreover, these curves are not necessarily distinct and do not contain an edge as a component with multiplicity 2 (see for example Figure \ref{fig:spscurves}(B)).
	\item Union of all the edges with multiplicity 2 (see Figure \ref{fig:spscurves}(C)).
\end{enumerate}
\end{prop}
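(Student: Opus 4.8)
My plan is to characterise strict polystability through closed orbits and to reduce the whole question to a combinatorial analysis on the weight lattice of the maximal torus $T=(\CC^*)^2<G$. Recall that a semistable point is \emph{polystable} exactly when its $G$-orbit is closed in $X^{ss}$, and \emph{strictly} polystable when moreover it is not stable, i.e. its stabiliser is positive-dimensional. Since $G/T\cong D_8$ is finite, I would first observe that for $x\in X^{ss}$ the orbit $G\cdot x=\bigcup_{g\in D_8}gT\cdot x$ is a finite union of translates of the single $T$-orbit $T\cdot x$; hence $G\cdot x$ is closed in $X^{ss}$ if and only if $T\cdot x$ is, and $\Stab_G(x)$ is finite iff $\Stab_T(x)$ is. Thus strict $G$-polystability is equivalent to strict $T$-polystability, and the problem becomes a torus computation together with an enumeration of $D_8$-orbits of configurations. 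This is consistent with, and refines, the strictly semistable classification recalled above.

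The second step is to set up the weight diagram. Writing $m=i-2$ and $n=j-2$, the monomial $x_0^ix_1^{4-i}y_0^jy_1^{4-j}$ carries $T$-weight $2(m,n)$, and the admissible exponents are exactly the pairs $(m,n)\in\{-2,\dots,2\}^2$ with $m\equiv n\pmod 2$, which are the thirteen lattice points matching the thirteen basis monomials. For a curve $\{F=0\}$ let $S\subset\{-2,\dots,2\}^2$ be the set of reduced weights occurring in $F$, and for a one-parameter subgroup $\lambda$ let $S_\lambda\subseteq S$ be the face minimising $\langle\lambda,\cdot\rangle$. The toric description of GIT then gives: $[F]$ is $T$-semistable iff $0\in\operatorname{Conv}(S)$; its $T$-orbit is closed in $X^{ss}$ iff $0\in\operatorname{relint}(\operatorname{Conv}(S))$, since any proper face $S_\lambda$ either fails $0\in\operatorname{Conv}(S_\lambda)$ (so the limit $\lim_{t\to0}\lambda(t)\cdot[F]$ is unstable and does not count) or forces $S_\lambda=S$ (so the limit lies in the orbit); and $\Stab_T([F])$ is positive-dimensional iff $\operatorname{Conv}(S)$ is not two-dimensional. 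Combining these, $[F]$ is strictly $T$-polystable precisely when $S$ lies on a line through the origin (or is the origin) with $0\in\operatorname{relint}(\operatorname{Conv}(S))$; the $G$-semistability constraint is then automatic, because $D_8$ fixes $0$ and maps such a symmetric $S$ to another symmetric configuration still containing $0$ in its convex hull.

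The third step is the enumeration. The parity condition $m\equiv n\pmod 2$ eliminates every candidate direction except the two coordinate axes, the two diagonals, and the origin, and under the $D_8$-action (the reflections $m\mapsto-m$, $n\mapsto-n$ and the swap $(m,n)\mapsto(n,m)$) these collapse to three cases. The origin $S=\{(0,0)\}$ forces $F=c\,(x_0x_1y_0y_1)^2$, the four edges with multiplicity two, giving (iii). A horizontal (equivalently vertical) segment forces $F=y_0^2y_1^2(ax_0^4+bx_0^2x_1^2+cx_1^4)$ with $a,c\neq0$, that is, the two skew double edges $y_0=0$, $y_1=0$ together with a residual quartic in one ruling whose four lines avoid the edges, giving (i). A diagonal (equivalently anti-diagonal) segment lets me substitute $u=x_0y_0$, $v=x_1y_1$ and factor the binary quartic $\sum a_iu^iv^{4-i}$ into four $\iota$-invariant $(1,1)$-forms $\alpha_kx_0y_0+\beta_kx_1y_1$ (resp. $\alpha_kx_0y_1+\beta_kx_1y_0$), each passing through two points of $\Delta$, giving (ii); here $0\in\operatorname{relint}$ forces the two extreme coefficients $a_0,a_4$, hence all $\alpha_k,\beta_k$, to be non-zero, which is exactly what precludes a reducible factor and therefore an edge component of multiplicity two.

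I expect the main obstacle to be the faithful translation of the polytope conditions into the geometric normal forms: factoring the invariant polynomials and correctly reading off the boundary clauses (``none of which is an edge'', ``not necessarily distinct'', ``multiplicity $2$'') from the single requirement $0\in\operatorname{relint}(\operatorname{Conv}(S))$, since these describe precisely the degenerate configurations where the families (i)--(iii) meet and where one must still verify that the orbit remains closed. A secondary point requiring care is the justification of the toric closed-orbit criterion in the projective rather than affine setting, namely that only the \emph{semistable} limits of one-parameter degenerations are relevant to closedness in $X^{ss}$.
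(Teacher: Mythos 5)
The paper offers no proof of this proposition---it is quoted verbatim from \cite{Sha81}---so there is nothing internal to compare against; your strategy (reduce strict $G$-polystability to strict $T$-polystability for the maximal torus $T=(\CC^*)^2$ since $G^0=T$ has finite index, characterise it by the state $S$ lying on a line through the origin with $0\in\mathrm{relint}(\mathrm{Conv}(S))$, observe that the thirteen weights only allow the two axes, the two diagonals and the origin as such lines, and let $D_8$ collapse these to three cases) is the standard state-polytope mechanism underlying Shah's Hilbert--Mumford analysis, and that skeleton is correct.

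There is, however, a genuine error in the final translation of the diagonal case (ii). Writing $F=\sum_{i=0}^{4}a_i(x_0y_0)^i(x_1y_1)^{4-i}$, the reduced weights occurring are $\{\,i-2 : a_i\neq 0\,\}\subset\{-2,\dots,2\}$, so $0\in\mathrm{relint}(\mathrm{Conv}(S))$ only says that some $i<2$ and some $i>2$ occur, i.e.\ that ($a_0\neq0$ or $a_1\neq0$) \emph{and} ($a_3\neq0$ or $a_4\neq0$); equivalently, neither $(x_0y_0)^2$ nor $(x_1y_1)^2$ divides $F$, which is exactly Shah's clause ``do not contain an edge as a component with multiplicity $2$''. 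It does \emph{not} force $a_0,a_4\neq0$ as you assert. Your stronger condition excludes genuinely strictly polystable curves: for instance $x_0x_1y_0y_1(\alpha x_0^2y_0^2+\beta x_1^2y_1^2)=0$ with $\alpha\beta\neq0$ has state $\{\pm(1,1)\}$, so $0$ lies in the relative interior; the $T$-orbit closure adds only the unstable points $[x_0^3x_1y_0^3y_1]$ and $[x_0x_1^3y_0y_1^3]$, hence the orbit is closed in $X^{ss}$; and the curve is a union of four $\iota$-invariant $(1,1)$-conics through $(0{:}1,1{:}0)$ and $(1{:}0,0{:}1)$ with no double edge, so it belongs to family (ii) but not to your normal form. (Note the asymmetry with the axis case (i): there the only nonzero weights available on the axis are $\pm(2,0)$, so the relint condition really does force both extreme coefficients to be nonzero; on the diagonal there are five lattice points and the extreme ones need not appear.) Relatedly, ``precludes a reducible factor'' is not the right reading of the multiplicity clause: a single factor $x_0y_0$ or $x_1y_1$, i.e.\ a pair of simple edges, is permitted and does occur among the strictly polystable curves. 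Replacing your condition by the correct one---no weight face entirely on one side of the origin, equivalently no edge of multiplicity two---repairs the argument.
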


\begin{figure}[htbp]
\centering
\begin{subfigure}{0.3\textwidth}
\centering
\begin{tikzpicture}
\draw (1,1)  node[circle,fill,inner sep=2pt]{};
\draw (-1,1)  node[circle,fill,inner sep=2pt]{};
\draw (1,-1)  node[circle,fill,inner sep=2pt]{};
\draw (-1,-1)  node[circle,fill,inner sep=2pt]{};
\draw (-1, 1.05) -- (1, 1.05);
\draw (-1, 0.95) -- (1, 0.95);
\draw (-1, -1.05) -- (1, -1.05);
\draw (-1, -0.95) -- (1, -0.95);
\draw (0.75, -1.05) -- (0.75, 1.05);
\draw (0.35, -1.05) -- (0.35, 1.05);
\draw (-0.75, -1.05) -- (-0.75, 1.05);
\draw (-0.35, -1.05) -- (-0.35, 1.05);
\end{tikzpicture}
\caption{}
\end{subfigure}
\begin{subfigure}{0.3\textwidth}
\centering
\begin{tikzpicture}
\draw (1,1)  node[circle,fill,inner sep=2pt]{};
\draw (-1,1)  node[circle,fill,inner sep=2pt]{};
\draw (1,-1)  node[circle,fill,inner sep=2pt]{};
\draw (-1,-1)  node[circle,fill,inner sep=2pt]{};
\draw (1, 1) to[out=180, in=90] (-1, -1);
\draw (1, 1) to[out=-90, in=0] (-1, -1);
\draw (1, 1) to[out=202.5, in=67.5] (-1, -1);
\draw (1, 1) to[out=247.5, in=22.5] (-1, -1);
\end{tikzpicture}
\caption{}
\end{subfigure}
\begin{subfigure}{0.3\textwidth}
\centering
\begin{tikzpicture}
\draw (1,1)  node[circle,fill,inner sep=2pt]{};
\draw (-1,1)  node[circle,fill,inner sep=2pt]{};
\draw (1,-1)  node[circle,fill,inner sep=2pt]{};
\draw (-1,-1)  node[circle,fill,inner sep=2pt]{};
\draw (-1, 1.05) -- (1, 1.05);
\draw (-1, 0.95) -- (1, 0.95);
\draw (-1, -1.05) -- (1, -1.05);
\draw (-1, -0.95) -- (1, -0.95);
\draw (1.05, -1) -- (1.05, 1);
\draw (0.95, -1) -- (0.95, 1);
\draw (-1.05, -1) -- (-1.05, 1);
\draw (-0.95, -1) -- (-0.95, 1);
\end{tikzpicture}
\caption{}
\end{subfigure}
\caption{Strictly polystable curves}
\label{fig:spscurves}
\end{figure}
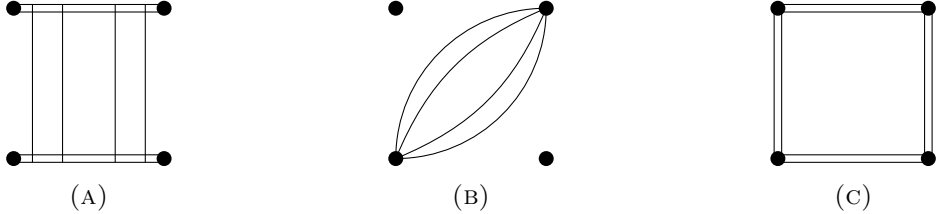

\begin{remark}
    Each family of strictly polystable points described in Proposition \ref{prop:polystable} (i) and (ii) consists of two disjoint irreducible components in $X$, which are interchanged by the action of the Weyl group of $G$. Every connected component is an open subset of a linear subspace of $X$. 
    Instead, the family of Proposition \ref{prop:polystable} (iii) consists of one point. We refer to Proposition \ref{prop:blow-up} for a description of these loci with respect to the coordinates of $X$.
\end{remark}

\section{Equivariant stratification} \label{sec:strati}
In this Section, we discuss the first step of Kirwan's method to compute the cohomology of GIT quotients. It consists of an equivariant stratification of the parameter space measuring the instability of every point under the group action (cf. Theorem \ref{thm:strata}). This stratification turns out to be perfect, in the sense that the Betti numbers of all strata sum up to the cohomology of the whole parameter space (cf. Theorem \ref{thm:equi}). We then apply these results to our case of numerically polarized Enriques surfaces of degree 2, and we obtain in Proposition \ref{prop:seriesss} the equivariant Betti numbers of the semistable locus.

\subsection{The HKKN stratification}\label{subsec:strata}
The first step in Kirwan's procedure (see \cite{Kir84}) is to consider the \textit{Hesselink-Kempf-Kirwan-Ness (HKKN) stratification} of the parameter space, which, from a symplectic viewpoint, coincides with the Morse stratification for the norm-square of an associated moment map. 

In general, let $ X\subset \PP^n $ be a complex projective manifold, acted on by a complex reductive group $ G $, inducing a linearization on the very ample line bundle $ L=\calO_{\PP^n}(1)|_{X} $. We pick a maximal compact subgroup $ K \subset G $, whose complexification gives $ G $, and a maximal torus $ T\subset G $, such that $ T\cap K $ is a maximal compact torus of $ K $. Before describing the stratification, we need also to fix an inner product together with the associated norm $ \| .\| $ on the dual Lie algebra $ \mathfrak{t}^{\vee}:=\mathrm{Lie}(T\cap K)^{\vee} $, e.g. the Killing form, invariant under the adjoint action of $ K $.
\begin{theorem}\label{thm:strata}\cite{Kir84}
	In the above setting, there exists a natural stratification of $ X$
	\begin{equation}
	X=\bigsqcup_{\beta \in \calB}S_\beta
	\end{equation}
	by $ G $-invariant locally closed subvarieties $ S_{\beta} $, indexed by a finite partially ordered set $ \calB\subset \mathrm{Lie}(T\cap K) $ such that the minimal stratum $ S_0=X^{ss} $ is the semistable locus of the action and the closure of $ S_{\beta} $ is contained in $ \bigcup_{\gamma \geq \beta} S_{\gamma}$, where $ \gamma \geq \beta $ if and only if $ \gamma = \beta $ or $ \|\gamma \| > \|\beta \| $. 
\end{theorem}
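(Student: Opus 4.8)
The plan is to realise the stratification as the Morse stratification for the norm-square of a moment map, and then to match it with the algebraic description coming from the Hilbert--Mumford criterion. First I would fix the maximal compact subgroup $K \subset G$ and a $K$-invariant Fubini--Study K\"{a}hler form on $\PP^n$ restricting to $X$; this produces a moment map $\mu \colon X \to \mathfrak{k}^\vee$ for the $K$-action, where $\mathfrak{k} := \mathrm{Lie}(K)$, which we identify with $\mathfrak{k}$ via the fixed invariant inner product. The central object is the smooth function $f := \|\mu\|^2 \colon X \to \mathbb{R}$. The entire theorem will be read off from the negative gradient flow of $f$: the stratum $S_\beta$ is the set of points whose downward trajectory converges to a fixed critical component, and the ordering by $\|\beta\|$ reflects that $f$ decreases along the flow.

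Next I would analyse the critical set of $f$. A computation with the moment-map identity shows that $x$ is critical for $f$ exactly when the fundamental vector field generated by $\mu(x) \in \mathfrak{k}$ vanishes at $x$, i.e. $\mu(x)$ fixes $x$. Using $K$-invariance of $f$ one moves $\mu(x)$ into the closed positive Weyl chamber of $\mathfrak{t} := \mathrm{Lie}(T\cap K)$, and the value of $\mu$ on each critical component is then a single vector $\beta$. Since $X \subset \PP^n$ is acted on linearly, $\mu$ is computed from the finitely many weights of $T$ occurring in the representation, so the indexing set $\calB$ of such $\beta$ is finite and inherits a partial order from the values $\|\beta\|$. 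For $\beta \in \calB$ I write $C_\beta$ for the corresponding critical component, identified with the locus in the fixed-point set $Z_\beta$ of the subtorus generated by $\beta$ on which the induced moment map equals $\beta$, so that $f|_{C_\beta} \equiv \|\beta\|^2$.

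The technical heart of the argument --- and the step I expect to be the main obstacle --- is to show that $f$, although far from Morse--Bott, is \emph{minimally degenerate}: along each $C_\beta$ there is a locally closed minimising submanifold on which $f$ attains its minimum value $\|\beta\|^2$, while in the normal directions the Hessian of $f$ is positive definite. Granting this, Morse theory for minimally degenerate functions yields the decomposition $X = \bigsqcup_{\beta \in \calB} S_\beta$ into the locally closed stable manifolds $S_\beta$ of the $C_\beta$. The flow is built from the $K$-invariant gradient, so each $S_\beta$ is at least $K$-invariant; to upgrade this to $G$-invariance I would exhibit the algebraic model
$$ S_\beta \cong G \times_{P_\beta} Y_\beta^{ss}, $$
where $P_\beta \subset G$ is the parabolic associated to $\beta$ and $Y_\beta^{ss}$ is a semistable-type locus inside the attracting set of $Z_\beta$ for the Levi action, linearised by a shifted character. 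This parabolic-induction picture is what ties the analytic stratum to the Hilbert--Mumford function: $Y_\beta^{ss}$ consists precisely of the points whose optimal destabilising one-parameter subgroup is conjugate to $\beta$, which by Kempf's uniqueness theorem for the optimal one-parameter subgroup is a well-defined, $G$-invariant condition.

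Finally I would extract the closure relations and identify the minimal stratum. Since $f$ strictly decreases along non-constant trajectories of $-\nabla f$ and satisfies $f \geq \|\beta\|^2$ on $S_\beta$ with equality only on $C_\beta$, any limit of points of $S_\beta$ lies in a stratum $S_\gamma$ with $\|\gamma\| \geq \|\beta\|$, and $\gamma \neq \beta$ forces strict inequality; hence $\overline{S_\beta} \subset \bigcup_{\gamma \geq \beta} S_\gamma$ for the stated order. The absolute minimum of $f$ is $0$, attained on $C_0 = \mu^{-1}(0)$, whose stable manifold $S_0$ is an open $G$-invariant set. The Kempf--Ness theorem identifies $\mu^{-1}(0)$ with the polystable locus and its $G$-saturation with $X^{ss}$, giving $S_0 = X^{ss}$ and completing the proof.
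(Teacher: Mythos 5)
Your outline is essentially correct, but be aware that the paper contains no proof of this statement: Theorem \ref{thm:strata} is quoted from \cite{Kir84}, and what follows it in the text is only the \emph{algebraic} description of the strata --- the weights $\alpha_i$, the closest points of convex hulls giving $\calB$, the linear sections $Z_\beta$ and $\bar{Y}_\beta$, and $S_\beta = G\cdot\bar{Y}_\beta \smallsetminus \bigcup_{\|\gamma\|>\|\beta\|} G\cdot\bar{Y}_\gamma$ --- which is the form actually used in the computations of Sections 4--6. What you reconstruct is Kirwan's original argument: the Morse stratification of $f=\|\mu\|^2$, the critical components $C_\beta$ indexed by moment-map values in the closed Weyl chamber, the key technical input that $f$ is minimally degenerate, the parabolic-induction model $S_\beta\cong G\times_{P_\beta}Y_\beta^{ss}$ which simultaneously gives $G$-invariance and the bridge to Kempf's optimal destabilising one-parameter subgroups, and Kempf--Ness to identify $S_0=X^{ss}$. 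That is a faithful summary of \cite{Kir84}, and it buys the analytic picture (equivariant perfection, Theorem \ref{thm:equi}) that the paper's purely algebraic recollection takes as a black box. Two places where your sketch is quicker than the argument it summarises: first, for the closure relation the pointwise inequality $f(y)\ge\|\beta\|^2$ for $y\in\overline{S_\beta}$ does not by itself force $\|\gamma\|\ge\|\beta\|$; you need that $\overline{S_\beta}$ is invariant under the downward flow, so that the entire trajectory of $y$ stays in $\{f\ge\|\beta\|^2\}$ and hence limits to a critical level $\|\gamma\|^2\ge\|\beta\|^2$, and excluding $\|\gamma\|=\|\beta\|$ with $\gamma\ne\beta$ again uses the local minimality coming from minimal degeneracy. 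Second, Kirwan defines $S_\beta$ via limit points of the steepest-descent path rather than asserting convergence of the flow, which was only established later; this is cosmetic for the statement at hand. Neither point invalidates your approach.
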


We briefly sketch the construction of the strata appearing in the previous Theorem \ref{thm:strata} (see \cite{Kir84} for more details). Let $ \{ \alpha_0,..., \alpha_n \}\subset \mathfrak{t}^{\vee} $ be the weights of the representation (a.k.a. the linearization) of $ G $ on $ \CC^{n+1} $ and identify $ \mathfrak{t}^{\vee} $ with $ \mathfrak{t} $ via the invariant inner product. After choosing a positive Weyl chamber $ \mathfrak{t}_{+} $, an element $ \beta\in \bar{\mathfrak{t}}_{+} $ belongs to the indexing set $ \calB $ of the stratification if and only if $ \beta $ is the closest point to the origin of the convex hull of some non-empty subset of $ \{ \alpha_0,..., \alpha_n \} $. We define $ Z_{\beta} $ to be the linear section of $ X $
$$ Z_{\beta}:=\{(x_0:...:x_n)\in X: x_i=0 \ \mathrm{if} \ \alpha_i.\beta\neq \|\beta \|^2 \}. $$
The stratum indexed by $ \beta $ is then
$$ S_{\beta}:=G\cdot \bar{Y}_{\beta}\smallsetminus \bigcup_{\|\gamma\|>\|\beta\|} G\cdot \bar{Y}_{\gamma}, $$
where
$$ \bar{Y}_{\beta}:=\{ (x_0:...:x_n)\in X: x_i=0 \ \mathrm{if} \ \alpha_i.\beta< \|\beta \|^2 \}. $$

The heart of Kirwan's results in \cite{Kir84} is the proof that the equivariant Betti numbers of the strata sum up to the cohomology of the whole space.

\begin{theorem}\label{thm:equi} \cite[8.12]{Kir84}
The stratification $ \{ S_\beta \}_{\beta \in \calB} $, constructed in Theorem \ref{thm:strata}, is $ G $-equivariantly perfect, namely the following holds:
	\begin{equation}
	P_{t}^{G}(X^{ss})=P_{t}^{G}(X)-\sum_{0\neq\beta \in \calB }t^{2 \mathrm{codim}(S_{\beta})}P_{t}^{G}(S_\beta).
	\end{equation}
\end{theorem}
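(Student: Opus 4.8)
The plan is to read the stated identity as the numerical shadow of a family of \emph{split} equivariant Thom--Gysin sequences, one per stratum, and then to telescope the resulting Poincar\'e series relations. The only non-formal input is a non-zero-divisor property of an equivariant Euler class, which is where the work concentrates.

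First I would convert the stratification of Theorem \ref{thm:strata} into an exhausting filtration of $X$ by $G$-invariant open subsets. Since $\overline{S_\beta}\subset\bigcup_{\gamma\geq\beta}S_\gamma$ and $\gamma\geq\beta$ forces $\|\gamma\|\geq\|\beta\|$, for every real $c$ the subset $V_c:=\bigcup_{\|\beta\|<c}S_\beta$ is open in $X$, with $V_c=X^{ss}$ for small $c>0$ and $V_c=X$ for large $c$. Listing the finitely many indices of $\calB$ in order of increasing norm (refining arbitrarily among equal norms, which is harmless because distinct strata of the same norm are incomparable, hence each closed in their union), I adjoin the strata one at a time starting from $X^{ss}$. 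At each stage a single $S_\beta$ appears as a closed, smooth, $G$-invariant submanifold of the larger open set $U$ obtained by adjoining it, carrying a $G$-equivariant normal bundle $N_\beta$ of rank $d_\beta:=\codim S_\beta$ (the smoothness of the strata and the existence of this equivariant normal bundle are part of Kirwan's construction of the $S_\beta$). The equivariant Thom--Gysin sequence of the pair then reads
\[ \cdots\to H^{i-2d_\beta}_G(S_\beta)\xrightarrow{\ \gamma_\beta\ }H^i_G(U)\to H^i_G(U\smallsetminus S_\beta)\to H^{i-2d_\beta+1}_G(S_\beta)\to\cdots. \]

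The entire statement reduces to showing that each such sequence breaks into short exact pieces, equivalently that every Gysin map $\gamma_\beta$ is injective. By the self-intersection formula, the composite of $\gamma_\beta$ with restriction back to $S_\beta$ equals cup product with the equivariant Euler class $e_G(N_\beta)\in H^*_G(S_\beta)$; hence it suffices to prove that $e_G(N_\beta)$ is \emph{not a zero divisor}. This is the main obstacle. To handle it I would first identify $H^*_G(S_\beta)\cong H^*_{\Stab(\beta)}(Z_\beta^{ss})$, using that $S_\beta$ fibres $G$-equivariantly over $G/P_\beta$ with fibre retracting onto $Z_\beta$, and that the parabolic $P_\beta$ retracts onto its Levi $\Stab(\beta)$. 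The one-parameter subgroup generated by $\beta$ is central in $\Stab(\beta)$ and, by the very definition of $Z_\beta$ (all surviving coordinates have $\beta$-weight exactly $\|\beta\|^2$), acts trivially on $Z_\beta$; meanwhile the normal directions along $Z_\beta$ are precisely the coordinates with $\alpha_i.\beta>\|\beta\|^2$, so this central circle acts on the fibres of $N_\beta$ with strictly positive weights. The Atiyah--Bott non-zero-divisor lemma then applies: restricting $e_G(N_\beta)$ along the classifying space of this centrally acting circle yields a product of nonzero weight classes, a non-zero-divisor in the relevant polynomial subring, from which one deduces that $e_G(N_\beta)$ is a non-zero-divisor in $H^*_G(S_\beta)$.

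Granting the splitting, each stage of the filtration contributes the clean relation
\[ P^G_t(U)=P^G_t(U\smallsetminus S_\beta)+t^{2d_\beta}P^G_t(S_\beta), \]
with no cancellation between adjacent degrees. Summing these identities as $U$ runs from $X^{ss}$ up to $X$ telescopes to
\[ P^G_t(X)=P^G_t(X^{ss})+\sum_{0\neq\beta\in\calB}t^{2\codim(S_\beta)}P^G_t(S_\beta), \]
which is exactly the asserted equality after rearrangement. Thus the only genuinely hard point is the non-zero-divisor property of $e_G(N_\beta)$; everything else is the standard homological algebra of a well-ordered stratification together with the Thom isomorphism.
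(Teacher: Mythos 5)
Your argument is correct and is essentially the proof of the cited result itself: the paper offers no proof beyond the reference to \cite[8.12]{Kir84}, and Kirwan's proof there is exactly your telescoped equivariant Thom--Gysin sequences, split by the Atiyah--Bott non-zero-divisor criterion for the equivariant Euler class of the normal bundle, using the circle generated by $\beta$ which is central in $\Stab\beta$, acts trivially on $Z_\beta$, and acts with nonzero weights on the normal fibres. One small slip that does not affect the argument: the normal directions to $S_\beta$ along $Z_\beta$ are the coordinates with $\alpha_i.\beta<\|\beta\|^2$ (consistent with the codimension formula $d(\beta)=n(\beta)-\dim G/P_\beta$, where $n(\beta)$ counts exactly those weights), not $\alpha_i.\beta>\|\beta\|^2$, so the central circle acts there with strictly \emph{negative} rather than positive weights --- still nonzero, which is all the non-zero-divisor lemma requires.
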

\begin{remark}\label{rmk:unstable}
	If we denote by $ \Stab \beta \subset G $ the stabiliser of $ \beta \in \mathfrak{t} $ under the adjoint action of $ G $, the equivariant Hilbert-Poincar\'{e} cohomology of each stratum is 
	$$ P_t^G(S_{\beta})=P_t^{\Stab \beta}(Z_{\beta}^{ss}), $$
	where $ Z_{\beta}^{ss} $ is the set of semistable points of $ Z_{\beta} $ with respect to a suitable linearization of the action of $ \Stab \beta $ (cf. \cite[8.11]{Kir84}).
\end{remark}
\subsection{Stratification for degree 2 Enriques surfaces}
We now come back to our case described in the Section \S \ref{sec:background}. We apply Kirwan's results of the previous subsection to prove the following: 
	\begin{prop} \label{prop:seriesss}
	The $ G$-equivariant Hilbert-Poincar\'e series of the semistable locus is 
		\begin{align*}
		P_{t}^{G}(X^{ss})&=\frac{1+t^2+t^4+t^6+t^8+t^{10}+t^{12}-2t^{16}-3t^{18}-3t^{20}-2t^{22}+t^{26}+t^{28}+t^{30}+t^{32}}{(1-t^4)(1-t^8)}\\
		&\equiv P_{t}^{G}(X)\equiv 1+t^2+2t^4+2t^6+4t^8+4t^{10} \ \mathrm{mod} \ t^{11}. 
		\end{align*}
	\end{prop}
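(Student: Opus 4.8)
The plan is to feed the data of the $G$-action on $X=\PP^{12}$ into the equivariant perfection formula of Theorem~\ref{thm:equi},
\[
P_t^G(X^{ss}) \;=\; P_t^G(X)\;-\;\sum_{0\neq\beta\in\calB} t^{2\codim_\CC S_\beta}\,P_t^G(S_\beta),
\]
and to evaluate the two sides separately. The thirteen weights of the representation are already recorded: the nine ``even'' weights on $\{-4,0,4\}^2$ and the four ``odd'' weights $(\pm2,\pm2)$; these are the only combinatorial input needed.

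First I would compute the ambient term $P_t^G(X)$. Since $\pi_0(G)=D_8$ is finite, over $\QQ$ one has $H^*_G(X)=H^*_T(X)^{D_8}$ for the maximal torus $T=(\CC^*)^2$. The space $\PP^{12}$ with a linear $T$-action is equivariantly formal, so $H^*_T(X)=\QQ[u_1,u_2][\xi]/\big(\prod_{i=0}^{12}(\xi+\alpha_i)\big)$ is free over $H^*(BT)=\QQ[u_1,u_2]$ on the powers $1,\xi,\dots,\xi^{12}$, where $\xi$ is the $G$-linearised (hence $D_8$-invariant) hyperplane class. The graded character of $w\in D_8$ therefore factors, and Molien's formula gives
\[
P_t^G(X)=\Big(\textstyle\sum_{j=0}^{12}t^{2j}\Big)\,P_t\big(\QQ[u_1,u_2]^{D_8}\big).
\]
As $D_8=I_2(4)$ acts on $\langle u_1,u_2\rangle$ as the symmetry group of the square, its invariant ring is polynomial with basic invariants in degrees $2$ and $4$, whence $P_t(\QQ[u_1,u_2]^{D_8})=1/((1-t^4)(1-t^8))$ and $P_t^G(X)=(1+t^2+\cdots+t^{24})/((1-t^4)(1-t^8))$, which reduces mod $t^{11}$ to $1+t^2+2t^4+2t^6+4t^8+4t^{10}$.

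The substantial work is the sum over $\calB$. Following Theorem~\ref{thm:strata}, each $\beta$ is the point of the closed Weyl chamber $\bar{\mathfrak t}_+=\{(x,y):x\ge y\ge 0\}$ nearest the origin in the convex hull of some subset of the thirteen weights; I would read these off the weight diagram. Because $G^0=T$ is a torus there are no roots, $G/P_\beta$ is finite, and the codimension collapses to the purely combinatorial count $\codim_\CC S_\beta=\#\{i:\alpha_i\cdot\beta<\|\beta\|^2\}$. For each chamber representative $\beta$ I would then determine the reductive stabiliser $\Stab\beta=T\rtimes(\Stab_{D_8}\beta)$ and the linear section $Z_\beta$, and invoke Remark~\ref{rmk:unstable} to replace $P_t^G(S_\beta)$ by $P_t^{\Stab\beta}(Z_\beta^{ss})$. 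Since the one-parameter subgroup $\lambda_\beta$ acts trivially on $Z_\beta$, the latter splits off a factor $1/(1-t^2)$ from the trivially acting subtorus, multiplied by the equivariant cohomology of a semistable locus for the residual rank-one torus (and the finite part of the stabiliser) acting on a small projective space — each an elementary $\PP^1$- or point-type computation, with the $\Stab_{D_8}\beta$-invariants taken at the end. Adding $t^{2\codim_\CC S_\beta}P_t^{\Stab\beta}(Z_\beta^{ss})$ over the $D_8$-inequivalent $\beta$ and subtracting from $P_t^G(X)$ should produce the stated numerator. As a built-in check, the minimal unstable codimension is $7$ (attained, e.g., by $\beta=(6/5,2/5)$, the projection of the origin onto the edge joining the weights $(0,4)$ and $(2,-2)$), so every correction term is divisible by $t^{14}$ and hence $P_t^G(X^{ss})\equiv P_t^G(X)\bmod t^{11}$.

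The main obstacle is precisely this enumeration and bookkeeping. The delicate point is that $\calB$ contains several \emph{non-lattice} directions — nearest points of edges joining an even to an odd weight, such as $(6/5,2/5)$, $(8/5,4/5)$ or $(12/5,4/5)$ — which are easy to overlook but are needed to match the coefficients (for instance, three distinct strata, of codimensions $7$ and $8$, already conspire to give the coefficient $3$ of $t^{16}$). One must list every such $\beta$, group them into $D_8$-orbits to avoid double counting, and in each case pin down the disconnected structure of $\Stab\beta$ and the shifted linearisation defining $Z_\beta^{ss}$; the individual equivariant cohomologies are routine, but keeping the invariants and the several strata of equal codimension straight is where the care lies.
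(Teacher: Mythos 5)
Your proposal follows essentially the same route as the paper: $P_t^G(X)$ is obtained as the $D_8$-invariants of $H^*(BT)\otimes H^*(\PP^{12})$, giving $(1+\cdots+t^{24})/((1-t^4)(1-t^8))$, and the unstable strata are then enumerated from the weight diagram exactly as in the paper's Table \ref{tab:1}, with the same minimal codimension $7$ (so $t^{14}$) guaranteeing $P_t^G(X^{ss})\equiv P_t^G(X) \bmod t^{11}$, and your spot-checks (e.g. the coefficient $3$ of $t^{16}$ in the correction sum) match the paper's data. The only quibble is the aside that the three non-lattice indices all arise from edges joining an even to an odd weight: your $(8/5,4/5)$ is the nearest point of the edge joining the two \emph{even} weights $(4,-4)$ and $(0,4)$ (the paper's $(4,4),(0,-4)$ row), but this does not affect the argument.
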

We need to start computing the equivariant Hilbert-Poincar\'{e} series $ P_{t}^{G}(X) $. Since $ X $ is compact, its equivariant cohomology ring is the invariant part under the action of $ \pi_0(G)=D_8 $ of $ H^*_{G^0}(X) $, which splits into the tensor product $ H^*(BG^0)\otimes H^*(X) $ (see \cite[8.12]{Kir84}). Then
\begin{equation}\label{eq:HGX}
    \begin{aligned}
    H^*_G(X)&= (H^*(\PP^{12})\otimes H^*(B(\CC^*)^2))^{D_8}\\
	& =(\QQ[h]/(h^{13})\otimes \QQ[c_1,c_2])^{D_8}.
    \end{aligned}
\end{equation}
In fact $ H^*(B(\CC)^2)\cong\QQ[c_1, c_2] $, where $ c_1 $ and $c_2$ have degree 2, and $ H^*(\PP^n)=\QQ[h]/(h^{n+1}) $, with $\deg(h)=2$. The group $ D_8 =(\zz \times \zz) \ltimes \zz $ acts on $(\CC^*)^2$ as follows: the first two involutions act via inversion $a \leftrightarrow a^{-1}$ on every factor of the torus, while the third interchanges the two factors. Moreover $D_8$ fixes the hyperplane class $ h\in H^2(\PP^{12}) $, as it acts on $\PP^{12}$ by change of coordinates. Therefore the ring of invariants is generated by $ c_1^2+c_2^2$, $ c_1^2c_2^2$ and $ h $:
$$ H^*_G(X)=\QQ[c_1^2+c_2^2, c_1^2c_2^2]\otimes \QQ[h]/(h^{13}). $$
Since $ \deg(c_1^2+c_2^2)=4$ and $\deg(c_1^2c_2^2)=8$, we have:
\begin{equation}\label{eq:PGX}
    P_t^G(X)=\dfrac{1+...+t^{24}}{(1-t^4)(1-t^8)}.
\end{equation}
According to Theorem \ref{thm:equi}, we need to subtract the contributions coming from the unstable strata. In our case, the indexing set $\calB $ of the stratification can be visualised by means of the Figure \ref{fig:weights}, called \textit{Hilbert diagram}.
	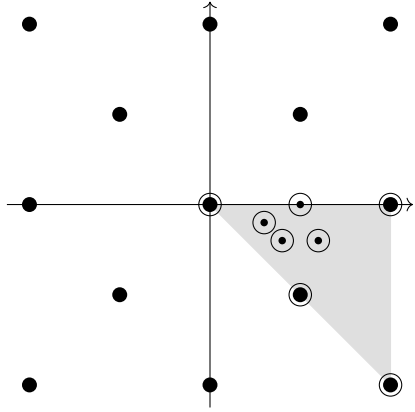
\begin{figure}[htbp] 
		\begin{tikzpicture}[scale=.6]
		\draw [lightgray!50, fill=lightgray!50] (0,0) -- (4,0) -- (4,-4) -- (0,0);
		\draw [->] (-4.5,0) -- (4.5,0);
		\draw [->] (0, -4.5) -- (0, 4.5);
		
		\draw (-4,-4) node[circle,fill, inner sep=2pt]{};
		\draw (-4,0) node[circle,fill, inner sep=2pt]{};
		\draw (-4,4) node[circle,fill, inner sep=2pt]{};
		
		\draw (-2,-2) node[circle,fill, inner sep=2pt]{};
		\draw (-2,2) node[circle,fill, inner sep=2pt]{};
		
		\draw (0,-4) node[circle,fill, inner sep=2pt]{};
		\draw (0,0) node[circle,fill, inner sep=2pt]{};
		\draw (0,4) node[circle,fill, inner sep=2pt]{};
		
		\draw (2,-2) node[circle,fill, inner sep=2pt]{};
		\draw (2,2) node[circle,fill, inner sep=2pt]{};
		
		\draw (4,-4) node[circle,fill, inner sep=2pt]{};
		\draw (4,0) node[circle,fill, inner sep=2pt]{};
		\draw (4,4) node[circle,fill, inner sep=2pt]{};

		\draw (0,0)  node[circle,fill,inner sep=1pt]{};
		\draw (0,0)  node[circle,draw, inner sep=3pt]{};
		\draw (4,0)  node[circle,fill,inner sep=1pt]{};
		\draw (4,0)  node[circle,draw, inner sep=3pt]{};
		\draw (4,-4)  node[circle,fill,inner sep=1pt]{};
		\draw (4,-4)  node[circle,draw, inner sep=3pt]{};
		\draw (2,-2)  node[circle,fill,inner sep=1pt]{};
		\draw (2,-2)  node[circle,draw, inner sep=3pt]{};
		\draw (2,0)  node[circle,fill,inner sep=1pt]{};
		\draw (2,0)  node[circle,draw, inner sep=3pt]{};
		\draw (6/5,-2/5)  node[circle,fill,inner sep=1pt]{};
		\draw (6/5,-2/5)  node[circle,draw, inner sep=3pt]{};
		\draw (8/5,-4/5)  node[circle,fill,inner sep=1pt]{};
		\draw (8/5,-4/5)  node[circle,draw, inner sep=3pt]{};
		\draw (12/5,-4/5)  node[circle,fill,inner sep=1pt]{};
		\draw (12/5,-4/5)  node[circle,draw, inner sep=3pt]{};
		\end{tikzpicture}
		\caption{\textit{Hilbert diagram}.}\label{fig:weights}
	\end{figure}

There are 13 black nodes in this square, and each of these nodes represents a monomial 
$$ x_0^ix_1^{4-i}y_0^jy_1^{4-j} \ \mathrm{for} \ i+j\equiv 0 \ \mathrm{mod} \ 2$$
in $ H^0(\PP^1\times\PP^1, \calO_{\PP^1\times\PP^1}(4, 4))^{\iota} $. This square is simply the diagram of weights $ \alpha_I=\alpha_{(i,j)} $ of the representation of $ G $ on $H^0(\PP^1\times\PP^1, \calO_{\PP^1\times\PP^1}(4, 4))^{\iota}$ with respect to the standard maximal torus $ T:=(\diag(a, a^{-1}), \diag(b, b^{-1}), 1) $ in $ G $. Each of the nodes denotes a weight of this representation, namely
\begin{equation}\label{eq:weight}
x_0^ix_1^{4-i}y_0^jy_1^{4-j} \leftrightarrow (4-2i, 4-2j), \ \mathrm{for} \ i+j\equiv 0 \ \mathrm{mod} \ 2.
\end{equation}
There is a non-degenerate inner product (the Killing form) defined on the Lie algebra $\mathfrak{t}:=\text{Lie}(T) \cong \text{Lie}(G) $. Using this inner product, we can identify the Lie algebra $ \mathfrak{t} $ with its dual $ \mathfrak{t}^\vee $, and the above square can be thought of as lying in $ \mathfrak{t} $. The axes of the Hilbert diagram thus coincide with the Lie algebras of the two factors of the maximal compact torus.

The Weyl group $ W(G):=N(T)/T\cong (\zz \times \zz) \ltimes \zz$ coincides with the dihedral group $ D_8 $ of all symmetries of the square. It operates on the \textit{Hilbert diagram} as follows: the first two involutions are reflections along the axes, while the third one is along the principal diagonal. It is easy to see that the grey region is the portion of the square which lies inside a fixed positive Weyl chamber $ \mathfrak{t}_+ $.

By definition, the indexing set $ \calB $ consists of vectors $ \beta $ such that $ \beta $ lies in the closure $ \bar{\mathfrak{t}}_+ $ of the positive Weyl chamber and is also the closest point to the origin of a convex hull spanned by a non-empty set of weights of the representation of $ G $ on $ H^0(\PP^1\times\PP^1, \calO_{\PP^1\times\PP^1}(4, 4))^{\iota} $. In this situation, we may assume that such a convex hull is either a single weight or it is cut out by a line segment joining two weights, which will be denoted by $ \langle \beta \rangle $ (see Figure \ref{fig:weights}).

The codimension $ d(\beta)$ of each stratum $ S_{\beta}\subset X $ is equal to (see \cite[3.1]{Kir89})
\begin{equation}\label{codim}
 d(\beta)= n(\beta)-\dim G/P_\beta, 
\end{equation} 
where $ n(\beta) $ is the number of weights $ \alpha_I $ such that $ \beta\cdot \alpha_I < ||\beta||^2 $, i.e. the number of weights lying in the half-plane containing the origin and defined by $ \beta $. Moreover, let $ P_\beta \subseteq G$ be the subgroup of elements in $ G $ which preserve $ \bar{Y}_{\beta} $, then $ P_{\beta} $ is a parabolic subgroup, whose Levi component is the stabiliser $ \Stab\beta $ of $ \beta\in \mathfrak{t} $ under the adjoint action of $ G $. In our case, the parabolic subgroup $P_{\beta}\cong T$ has always dimension 2, hence $ \dim(G/P_{\beta})=0 $.

All the contributions coming from the unstable strata are summarised in Table \ref{tab:1} and can be deduced by analysing Figure \ref{fig:weights}.

	\begin{table}[htpb]
		\centering
		\begin{tabular}{c|c|c|c|c}
			weights in $ \langle \beta \rangle $& $ n(\beta) $&$\Stab \beta $& $2d(\beta)$&$P_t^{G}(S_{\beta})$\\ 
			\hline \rule{0pt}{2.5ex}$ (4, -4) $& 12 &$ (\CC^*)^2\rtimes \zz $&24&$ (1-t^2)^{-1}(1-t^4)^{-1} $\\[1ex]
			$ (4, 0), (2, -2), (0, -4) $& 9 &$ (\CC^*)^2\rtimes \zz $&18&$ (1+t^2-t^6)(1-t^2)^{-1}(1-t^4)^{-1} $\\[1ex]
			$ (4,4),(2,-2) $& 9 &$ (\CC^*)^2 $&18&$ (1-t^2)^{-1}$\\[1ex]
			$ (2,2),(0, -4) $& 7 &$ (\CC^*)^2 $&14&$ (1-t^2)^{-1}$\\[1ex]
			$ (4,4), (0, -4) $& 8 &$ (\CC^*)^2 $&16&$ (1-t^2)^{-1} $\\[1ex]
			$ (2, 2), (2, -2) $& 8 &$ \CC^*\times G_2 $&16&$ (1-t^2)^{-1} $\\[1ex]
			$ (4, 4), (4, 0), (4, -4) $& 10 &$ \CC^*\times G_2 $&20&$ (1+t^2-t^6)(1-t^2)^{-1}(1-t^4)^{-1} $\\[1ex]
		\end{tabular}
		\caption{Cohomology of the unstable strata.} \label{tab:1}
	\end{table}
 For every $ \beta \in \calB $, the first column of Table \ref{tab:1} shows the weights contained in the segment $ \langle \beta \rangle $ orthogonal to the vector $ \beta\in \mathfrak{t} $ (see Figure \ref{fig:weights}): then via the correspondence (\ref{eq:weight}) one can obtain an explicit geometrical interpretation of the curve contained in each unstable stratum. The terms appearing in the second, third and fourth columns are determined easily from the \textit{Hilbert diagram}. Here $ (\CC^*)^2\rtimes \mathbb{Z}_2 $ is a double covering of the maximal torus of $G $, determined by the extension $ (a, b)\leftrightarrow (b^{-1}, a^{-1}) $. The computations in the last column follow from applying Theorem \ref{thm:equi} to the action of $ \Stab \beta $ on $ Z_{\beta} $, in order to compute the equivariant cohomology of each unstable stratum $ P_t^{\Stab \beta}(Z_{\beta}^{ss})=P_t^G(S_{\beta}) $ (see Remark \ref{rmk:unstable}). 
 
 We shall discuss all the these cases below.
 \begin{lemma}\label{lem:unstable1}
	There is exactly one unstable stratum indexed by $\beta$, as listed in Table \ref{tab:1}, such that $Z_{\beta}\cong \PP^0$, and their equivariant Hilbert-Poincar\'{e} series is $P_t^G(S_{\beta})=(1-t^2)^{-1}(1-t^4)^{-1}$. 
\end{lemma}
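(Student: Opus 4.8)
The plan is to isolate, among the strata listed in Table \ref{tab:1}, the unique one whose linear section is a single point, and then to identify its equivariant cohomology with the cohomology of the classifying space of its stabiliser. First I would read off the dimension of $Z_\beta$ from its defining equations: since $Z_\beta=\{(x_0:\dots:x_n)\in X : x_i=0 \text{ if } \alpha_i\cdot\beta\neq\|\beta\|^2\}$, the projective space $Z_\beta$ has dimension one less than the number of weights lying on the segment $\langle\beta\rangle$. Hence $Z_\beta\cong\PP^0$ exactly when $\langle\beta\rangle$ reduces to a single weight. Inspecting the first column of Table \ref{tab:1} (equivalently, the \textit{Hilbert diagram} in Figure \ref{fig:weights}), the only such case is the top row, where $\langle\beta\rangle=\{(4,-4)\}$; there $\beta=(4,-4)$ is a vertex of the positive Weyl chamber $\bar{\mathfrak{t}}_+$ and $\Stab\beta=(\CC^*)^2\rtimes\zz$. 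This settles the uniqueness assertion.

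Next I would apply Remark \ref{rmk:unstable}, which gives $P_t^{G}(S_\beta)=P_t^{\Stab\beta}(Z_\beta^{ss})$, and argue that here $Z_\beta^{ss}=Z_\beta$. Indeed, $Z_\beta$ is a single point, so $\Stab\beta$ fixes it; with respect to the shifted linearization built into Kirwan's construction its weight is $\alpha_I-\beta$, which vanishes because the defining weight $\alpha_I=(4,-4)$ coincides with $\beta$. A weight-zero fixed point is semistable, so the point survives and $Z_\beta^{ss}=Z_\beta=\{\mathrm{pt}\}$. Consequently $P_t^{G}(S_\beta)=P_t^{\Stab\beta}(\{\mathrm{pt}\})=P_t\bigl(B((\CC^*)^2\rtimes\zz)\bigr)$.

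It then remains to compute the rational cohomology of $B((\CC^*)^2\rtimes\zz)$. Since $\zz$ is finite and we work over $\QQ$, this is the invariant subring $H^*(B(\CC^*)^2)^{\zz}=\QQ[c_1,c_2]^{\zz}$, where, by the description of the extension $(a,b)\leftrightarrow(b^{-1},a^{-1})$ recalled after Table \ref{tab:1}, the generator of $\zz$ acts on the degree-two classes by $c_1\mapsto -c_2$ and $c_2\mapsto -c_1$. Diagonalising this involution, the class $c_1-c_2$ is fixed while $c_1+c_2$ is negated, so the generator acts as a pseudo-reflection; thus $\QQ[c_1,c_2]^{\zz}=\QQ[\,c_1-c_2,\,(c_1+c_2)^2\,]$ is a free polynomial algebra on generators of degrees $2$ and $4$. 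This yields $P_t^{\Stab\beta}(\{\mathrm{pt}\})=(1-t^2)^{-1}(1-t^4)^{-1}$, as claimed.

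The only genuinely delicate point is the semistability statement $Z_\beta^{ss}=Z_\beta$ for the one-point section: one must ensure that, after the shift of linearization defining the residual $\Stab\beta$-action, the unique point of $Z_\beta$ is semistable rather than unstable. This is exactly where the identity $\alpha_I=\beta$ enters, forcing the shifted weight to vanish; alternatively one may invoke the fact that every $\beta\in\calB$ indexes a nonempty stratum, whence $\emptyset\neq Z_\beta^{ss}\subseteq Z_\beta=\{\mathrm{pt}\}$ gives the same conclusion. The subsequent determination of the invariant ring is then routine.
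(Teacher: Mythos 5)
Your proposal is correct and follows essentially the same route as the paper: identify the unique single-weight case $\beta=(4,-4)$ from the Hilbert diagram, apply Remark \ref{rmk:unstable} to reduce to $P_t^{(\CC^*)^2\rtimes\zz}(\PP^0)$, and compute the invariant ring $\QQ[c_1,c_2]^{\zz}$, whose Poincar\'e series is indeed $(1-t^2)^{-1}(1-t^4)^{-1}$ since the involution $c_1\mapsto -c_2$, $c_2\mapsto -c_1$ is a pseudo-reflection. The paper's proof is terser (it asserts the value of $P_t^{(\CC^*)^2\rtimes\zz}(\PP^0)$ without spelling out the invariant theory or the semistability of the single point), so your extra details on $Z_\beta^{ss}=Z_\beta$ and the diagonalisation are a welcome but not divergent elaboration.
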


\begin{proof}
The case under consideration corresponds to the first row of Table \ref{tab:1}, where the line orthogonal to $ \beta $ contains only the weight $\beta$ itself, giving the point $ Z_{\beta}\cong \PP^0 $. Hence by Remark \ref{rmk:unstable} the equivariant cohomology of the corresponding stratum is
	$$ P_t^G(S_{\beta})=P_t^{(\CC^*)^2\rtimes\zz}(\PP^0)=\frac{1}{(1-t^2)(1-t^4)}. $$
\end{proof}
 
\begin{lemma}\label{lem:unstable2}
	There are exactly four unstable strata indexed by $\beta$, as listed in Table \ref{tab:1}, such that $Z_{\beta}\cong \PP^1$, and their equivariant Hilbert-Poincar\'{e} series is $P_t^G(S_{\beta})=(1-t^2)^{-1}$. 
\end{lemma}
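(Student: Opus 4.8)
The plan is to treat all four strata at once by the same mechanism used in Lemma \ref{lem:unstable1}, namely the identity $P_t^G(S_\beta)=P_t^{\Stab\beta}(Z_\beta^{ss})$ of Remark \ref{rmk:unstable}. First I would isolate the relevant values of $\beta$: these are exactly the entries of Table \ref{tab:1} whose orthogonal segment $\langle\beta\rangle$ contains precisely two weights, i.e.\ the rows with weight data $\{(4,4),(2,-2)\}$, $\{(2,2),(0,-4)\}$, $\{(4,4),(0,-4)\}$ and $\{(2,2),(2,-2)\}$. For each of these, the surviving coordinates of $Z_\beta=\{x_i=0 \text{ if } \alpha_i\cdot\beta\neq\|\beta\|^2\}$ are the two monomials attached to those weights, so $Z_\beta$ is the projective line they span and $Z_\beta\cong\PP^1$.

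The geometric heart is the description of $Z_\beta^{ss}$. Writing the two weights as $\alpha_0,\alpha_1$, the defining property of $\beta\in\calB$ is that it is the point of the segment $[\alpha_0,\alpha_1]$ closest to the origin; since both weights satisfy $\alpha_i\cdot\beta=\|\beta\|^2$, the vector $\beta$ is the foot of the perpendicular from the origin to the line through them, and a direct check (the foot lands strictly between the two weights, as it does for each of the four points $(12/5,-4/5)$, $(6/5,-2/5)$, $(8/5,-4/5)$, $(2,0)$) shows that $\beta$ lies in the relative interior of the segment. After the shift of linearization by $-\beta$ defining the $\Stab\beta$-action on $Z_\beta$, the two weights become $\gamma_i=\alpha_i-\beta$, nonzero and pointing to opposite sides of the origin along $\beta^\perp$. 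Hence the origin lies strictly between $\gamma_0$ and $\gamma_1$, so the two torus-fixed points of $\PP^1$ (the pure monomials) are unstable while every other point is semistable, giving $Z_\beta^{ss}\cong\PP^1\smallsetminus\{2\text{ points}\}\cong\CC^*$.

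It then remains to compute $P_t^{\Stab\beta}(\CC^*)$. The key point is that the one-parameter subgroup generated by $\beta$ acts trivially on $Z_\beta$, since every weight there pairs to $\|\beta\|^2$ with $\beta$; equivalently, the subtorus $T_\beta=\ker(\alpha_0-\alpha_1)$ in the direction of $\beta$ acts trivially, while the complementary $\CC^*$ acts on $\CC^*$ through the nonzero character $\alpha_0-\alpha_1$, hence with finite stabilisers. For the three strata with $\Stab\beta=(\CC^*)^2$ this yields
$$ H^*_{(\CC^*)^2}(\CC^*)\cong H^*(BT_\beta)\otimes H^*_{\CC^*}(\CC^*)\cong\QQ[s], \quad \deg s=2, $$
the second factor contributing only $\QQ$ because $\CC^*$ acts on $\CC^*$ with finite stabilisers; the Poincar\'e series is $(1-t^2)^{-1}$. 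For the last stratum, $\Stab\beta=\CC^*\times G_2$ with $\pi_0=\zz$, so I would compute the same connected-group cohomology and then pass to $\zz$-invariants. Here the nontrivial element of $\pi_0$ (the involution interchanging $y_0$ and $y_1$) fixes the $x$-torus, which is precisely the trivially-acting factor $T_\beta$ attached to $\beta=(2,0)$; it therefore acts trivially on $H^*(BT_\beta)=\QQ[s]$ and the invariants are unchanged, again giving $(1-t^2)^{-1}$. In all four cases Remark \ref{rmk:unstable} yields $P_t^G(S_\beta)=(1-t^2)^{-1}$.

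The step I expect to be most delicate is the semistability analysis: one must make the shift of linearization precise and verify that exactly the two fixed points are discarded, and then in the final case keep careful track of how the $\pi_0$-action permutes the two weights and acts on the torus, confirming that the involution does not mix the trivially-acting factor $T_\beta$ with the scaling factor and hence leaves $H^*(BT_\beta)$ pointwise fixed.
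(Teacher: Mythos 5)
Your proposal is correct, and the identification of the four strata, the description $Z_\beta^{ss}\cong\PP^1\smallsetminus\{2 \text{ torus-fixed points}\}\cong\CC^*$, and both equivariant cohomology computations all check out against the paper's numbers. The route differs from the paper's in the final step, though. Both proofs reduce to $P_t^G(S_\beta)=P_t^{\Stab\beta}(Z_\beta^{ss})$ via Remark \ref{rmk:unstable}, but the paper then applies the equivariantly perfect stratification (Theorem \ref{thm:equi}) a second time to the $\Stab\beta$-action on $Z_\beta\cong\PP^1$, writing $P_t^{\Stab\beta}(Z_\beta^{ss})=P_t^{\Stab\beta}(\PP^1)-(\text{fixed-point contributions})$: explicitly $\frac{1+t^2}{(1-t^2)^2}-\frac{2t^2}{(1-t^2)^2}$ in the three torus cases (two unstable points, each of codimension one) and $\frac{1+t^2}{(1-t^2)(1-t^4)}-\frac{t^2}{(1-t^2)^2}$ in the $\CC^*\times G_2$ case (the two points merged into one stratum by the involution). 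You instead compute $H^*_{\Stab\beta}(\CC^*)$ directly, splitting off the trivially-acting subtorus $T_\beta$ and observing that the complementary $\CC^*$ acts with finite stabilisers, so only $H^*(BT_\beta)=\QQ[s]$ survives rationally. Your version has the advantage of making the semistability analysis on $Z_\beta$ explicit (the paper only records the outcome in its displayed formulas) and of giving an answer that does not rely on perfection of the stratification of $\PP^1$; the paper's version is more uniform with the rest of Section \ref{sec:strati} and with Lemmas \ref{lem:unstable1} and \ref{lem:unstable3}, where $Z_\beta$ has higher dimension and a direct homogeneous-space description is less immediate. Your treatment of the $\pi_0$-action in the $\CC^*\times G_2$ case — checking that the involution fixes $T_\beta$ pointwise and only inverts the other factor — is exactly the point that needs care, and you handle it correctly.
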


\begin{proof}
	Looking at Figure \ref{fig:weights}, there are four unstable strata indexed by $\beta \in \calB$ such that the segment $\langle \beta \rangle$ orthogonal to the vector $\beta$ contains two weights that generate the line $Z_{\beta}\subset X$. As summarised in Table \ref{tab:1}, in three of these cases the stabiliser $\Stab \beta$ is isomorphic to the maximal torus $(\CC^*)^2$ and hence by Remark \ref{rmk:unstable}
	$$ P_t^G(S_{\beta})=\frac{1+t^2}{(1-t^2)^2}-\frac{2t^2}{(1-t^2)^2}=\frac{1}{1-t^2}. $$
	In the remaining case, corresponding to the sixth row of Table \ref{tab:1}, the stabiliser is $\Stab \beta \cong \CC^* \times G_2$ and the cohomology of the corresponding stratum is 
	$$ P_t^G(S_{\beta})=\frac{1+t^2}{(1-t^2)(1-t^4)}-\frac{t^2}{(1-t^2)^2}=\frac{1}{1-t^2}. $$
\end{proof}

\begin{lemma}\label{lem:unstable3}
	There are exactly two unstable strata indexed by $\beta$, as listed in Table \ref{tab:1}, such that $Z_{\beta}\cong \PP^2$, and its equivariant Hilbert-Poincar\'{e} series is $P_t^G(S_{\beta})=(1+t^2-t^6)(1-t^2)^{-1}(1-t^4)^{-1}$. 
\end{lemma}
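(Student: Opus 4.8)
The plan is to identify the two strata in question as the second and seventh rows of Table \ref{tab:1}, and then to run Kirwan's equivariant perfection (Theorem \ref{thm:equi}) one level down, for the action of $\Stab\beta$ on $Z_\beta\cong\PP^2$, exactly in the spirit of the proof of Lemma \ref{lem:unstable2}.

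First I would pin down the two strata geometrically. Reading off Figure \ref{fig:weights}, the segments $\langle\beta\rangle$ orthogonal to $\beta$ that contain three collinear weights (hence $Z_\beta\cong\PP^2$) are, up to the $W(G)=D_8$-symmetry, exactly two: the segment on the line $\{x-y=4\}$ through the weights $(4,0),(2,-2),(0,-4)$, with $\beta=(2,-2)$ and $\Stab\beta=(\CC^*)^2\rtimes\zz$; and the segment on $\{x=4\}$ through $(4,4),(4,0),(4,-4)$, with $\beta=(4,0)$ and $\Stab\beta=\CC^*\times G_2$. Every other triple of collinear weights either passes through the origin (so its closest point to $0$ is $0$, giving the semistable stratum $S_0$) or is $D_8$-equivalent to one of these two; this justifies the count ``exactly two''.

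Next, for each $\beta$ I would apply Remark \ref{rmk:unstable}, so that $P_t^G(S_\beta)=P_t^{\Stab\beta}(Z_\beta^{ss})$, and compute the right-hand side by Theorem \ref{thm:equi} for the action of $\Stab\beta$ on $Z_\beta$. The central one-parameter subgroup generated by $\beta$ acts trivially on $Z_\beta$, so in both cases the effective action is that of a group $\CC^*\rtimes\zz$ in which the $\CC^*$ carries the three coordinates with weights $+,0,-$ and the extra involution swaps the two extreme coordinates while fixing the middle one (it acts by $(a,b)\mapsto(b^{-1},a^{-1})$ in the first case and by inversion on the $b$-factor of $G_2$ in the second). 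Using that $\PP^2$ is equivariantly formal and that this involution can be taken to fix the equivariant hyperplane class, I would first obtain
$$P_t^{\Stab\beta}(Z_\beta)=\frac{1}{1-t^2}\cdot\Big(\frac{1+t^2+t^4}{1-t^4}\Big)=\frac{1+t^2+t^4}{(1-t^2)(1-t^4)},$$
the first factor coming from the trivially acting $\beta$-subtorus, and the $(1-t^4)^{-1}$ arising as the $\zz$-invariants of $H^*(B\CC^*)=\QQ[d]$; crucially, the two a priori different groups $(\CC^*)^2\rtimes\zz$ and $\CC^*\times G_2$ yield the same series here. The weights $+,0,-$ then produce a single unstable substratum: the involution identifies the two extreme fixed points $(1:0:0)$ and $(0:0:1)$ into one orbit, while the weight-$0$ point is semistable. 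This substratum has $Z_{\beta'}\cong\PP^0$, stabilised inside $\Stab\beta$ by the full torus $(\CC^*)^2$, and is of codimension $2$ in $Z_\beta$, so it contributes $t^{4}(1-t^2)^{-2}$. Subtracting,
$$P_t^{\Stab\beta}(Z_\beta^{ss})=\frac{1+t^2+t^4}{(1-t^2)(1-t^4)}-\frac{t^4}{(1-t^2)^2}=\frac{1+t^2-t^6}{(1-t^2)(1-t^4)},$$
which is the claimed series and is identical for both strata.

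The main obstacle I anticipate is bookkeeping the reduced action rather than any hard computation. One must verify that the extra involution in $\Stab\beta$ genuinely fuses the two extreme fixed points into a single substratum — so that one subtracts one copy of $t^4(1-t^2)^{-2}$ and not two — and that the internal stabiliser of $\beta'$ loses this involution and hence equals the whole torus $(\CC^*)^2$. This is the same mechanism that distinguishes the $(\CC^*)^2$-rows from the $\CC^*\times G_2$-row in Lemma \ref{lem:unstable2}, and it is precisely what forces the two different groups to produce identical equivariant series.
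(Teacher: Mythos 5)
Your proposal is correct and follows essentially the same route as the paper: identify the two strata as the second and last rows of Table \ref{tab:1}, note that $P_t(B\Stab\beta)=(1-t^2)^{-1}(1-t^4)^{-1}$ in both cases, and apply Theorem \ref{thm:equi} to the residual action on $Z_\beta\cong\PP^2$, subtracting a single codimension-$2$ substratum with stabiliser $(\CC^*)^2$ to get $\frac{1+t^2+t^4}{(1-t^2)(1-t^4)}-\frac{t^4}{(1-t^2)^2}=\frac{1+t^2-t^6}{(1-t^2)(1-t^4)}$. The details you supply beyond the paper's terse proof — the $D_8$-count of three-weight segments and the fusion of the two extreme fixed points into one substratum by the involution — are accurate and consistent with Table \ref{tab:1}.
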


\begin{proof}
	The cases under consideration correspond to the second and last row of Table \ref{tab:1}, where the segment orthogonal to $ \beta $ contains three weights spanning $ Z_{\beta}\cong \PP^2 $. By Theorem \ref{thm:equi} the equivariant cohomological series of the correspondent stratum is
	$$ P_t^G(S_{\beta})=\frac{1+t^2+t^4}{(1-t^2)(1-t^4)}-\frac{t^4}{(1-t^2)^2}=\frac{1+t^2-t^6}{(1-t^2)(1-t^4)}. $$
\end{proof}
 
 We are finally ready to prove Proposition \ref{prop:seriesss}:
\begin{proof}[Proof of Proposition \ref{prop:seriesss}]
According to Theorem \ref{thm:equi}, we need to subtract all the contributions of the unstable strata, appearing in Table \ref{tab:1}, to the $ G $-equivariant cohomology of $ X $ computed in (\ref{eq:PGX}).
\end{proof}
 
\section{Kirwan blow-up}
\label{sec:blow}

In this section we recall the general construction of the Kirwan blow-up of a GIT quotient which provides an orbifold resolution of singularities. It is achieved by stratifying the GIT boundary $X/\!\!/G \smallsetminus X^s/\!\!/G$ in terms of the connected components $R$ of the stabilisers of the associated polystable orbits. Then, one proceeds by blowing up these strata according to the dimension of the corresponding $R$. In our situation, the Kirwan blow-up $M^K \rightarrow M$ is obtained by blowing up three loci of strictly polystable points, geometrically described in Proposition \ref{prop:polystable} (see also Proposition \ref{prop:blow-up}).

\subsection{General setting}
\label{subsec:blow}
In general the equivariant cohomology $ H^*_G(X^{ss}) $ of the semistable locus does not coincide with the cohomology $ H^*(X/\!\!/G) $ of the GIT quotient, unless in the case when all semistable points are actually stable. This is not the case for us. The solution is given by constructing a \textit{partial resolution of singularities} $ \widetilde{X}/\!\!/G \rightarrow X/\!\!/G $, known as \textit{Kirwan blow-up} \cite{Kir85}, for which the group $ G $ acts with finite isotropy groups on the semistable points $ \widetilde{X}^{ss} $. We briefly describe how it is constructed.

We consider again the setting, as in Section \S \ref{subsec:strata}, of a smooth projective manifold $ X \subset \PP^n $ acted on by a  reductive group $ G $. We also assume throughout the paper that the stable locus $ X^{s}\neq \varnothing $ is non-empty. In order to produce the Kirwan blow-up, we need to study the GIT boundary $ X/\!\!/G \smallsetminus X^s/\!\!/G $ and stratify it in terms of the isotropy groups of the associated semistable points. More precisely, let $ \calR $ be a set of representatives for the conjugacy classes of connected components of stabilisers of strictly polystable points, i.e. semistable points with closed orbits, but infinite stabilisers. Let $ r $ be the maximal dimension of the groups in $ \calR $, and let $ \calR(r)\subseteq \calR $ be the set of representatives for conjugacy classes of subgroups of dimension $ r $. For every $ R\in \calR(r) $, consider the fixed locus
\begin{equation}
	Z_{R}^{ss}:=\{ x \in X^{ss} : R \ \text{fixes} \ x \} \subset X^{ss}.
\end{equation}

Kirwan showed \cite[\S 5]{Kir85} that the subset 
$$ \bigcup_{R\in \calR(r)}G\cdot Z_{R}^{ss}\subset X^{ss} $$
is a disjoint union of smooth $ G $-invariant closed subvarieties in $ X^{ss} $. Now let $ \pi_1:X_1\rightarrow X^{ss} $ be the blow-up of $ X^{ss} $ along $\bigcup_{R\in \calR(r)} G\cdot Z_{R}^{ss} $ and $ E\subset X_1 $ be the exceptional divisor.

Since the centre of the blow-up is invariant under $ G $, there is an induced action of $ G $ on $ X_1 $, linearised by a suitable ample line bundle. If $ L=\calO_{\PP^n}(1)|_{X} $ is the very ample line bundle on $ X $ linearised by $ G $, then there exists $ d\gg 0 $ such that $ L_1:=\pi_1^* L^{\otimes d} \otimes \calO_{X_1}(-E) $ is very ample and admits a $ G $-linearization (see \cite[3.11]{Kir85}). After making this choice, the set $ \calR_1 $ of representatives for the conjugacy classes of connected components of isotropy groups of strictly polystable points in $ X_1 $ will be strictly contained in $ \calR $ (see \cite[6.1]{Kir85}). Moreover, the maximum among the dimensions of the reductive subgroups in $ \calR_1 $ is strictly less than $ r $. Now we restrict to the new semistable locus $ X_1^{ss}\subset X_1 $, so that we are ready to perform the same process as above again. 

After at most $ r $ steps, we obtain a finite sequence of \textit{modifications}:
\begin{equation}
	\widetilde{X}^{ss}:= X^{ss}_r \rightarrow ... \rightarrow X^{ss}_1 \rightarrow X^{ss},
\end{equation}
by iteratively restricting to the semistable locus and blowing up smooth invariant centres (cf. \cite[6.3]{Kir85}).

Therefore, in the last step, $ \widetilde{X}^{ss} $ is equipped with a $ G $-linearised ample line bundle such that $ G $ acts with finite stabilisers. In conclusion, we have the diagram
\begin{equation}
\begin{tikzcd}
\widetilde{X}^{ss} \arrow{r} \arrow{d} & X^{ss} \arrow{d} \\
\widetilde{X}/\!\!/G \arrow{r}& X/\!\!/G,
\end{tikzcd}
\end{equation}
where the \textit{Kirwan blow-up} $ \widetilde{X}/\!\!/G $, having at most finite quotient singularities, gives a \textit{partial desingularization} of $ X/\!\!/G $, which in general has worse singularities.

\subsection{Kirwan blow-up for degree 2 Enriques surfaces}
Coming back to our case, we need to find the indexing set $ \calR $ of the Kirwan blow-up and the corresponding spaces $ Z_{R}^{ss} $, for all $ R\in \calR $. Namely, one must compute the conjugacy classes of the connected components of the identity in the stabilisers among all three families of polystable curves listed in Proposition \ref{prop:polystable}. 

We must find which non-trivial connected reductive subgroups $ R \subset G $ fix at least one semistable point. Firstly, since $ R $ is connected, $ R $ must be contained in $ G^0=(\CC^*)^2 $, therefore it is a subtorus of rank 1 or 2. Secondly, since we are interested only in the conjugacy class of $R$, we may assume that its intersection $R\cap (S^1)^2$  with the maximal compact torus is a maximal compact subgroup. The fixed point set $ Z_R^{ss} $ in $X^{ss}  $ consists of all semistable points whose representatives in $ H^0(\PP^1\times\PP^1, \calO_{\PP^1\times\PP^1}(4, 4))^{\iota}\cong\CC^{13} $ are fixed by the linear action of $ R $.

If $R$ has rank 2, then it coincides with the whole $(\CC^*)^2$ and clearly $Z_R=\{ x_0^2x_1^2y_0^2y_1^2 \}$. Instead, if $R$ has rank 1, $ Z_R $ is spanned by those weight vectors which lie on a line through the centre of the \textit{Hilbert diagram} and orthogonal to the Lie subalgebra $ \text{Lie}(R) \subset \mathfrak{t} $. Up to the action of a suitable element of the Weyl group $ W(G) $, we can assume that the line passes through the chosen closed positive Weyl chamber $ \bar{\mathfrak{t}}_+ $. We have only two possibilities (see Figure \ref{fig:weights}), namely the x-axis and the bisector of the II and III quadrants. This considerations lead to the following:
	
\begin{prop} \label{prop:blow-up}
The indexing set of Kirwan blow-up, such as the fixed loci $ Z_R^{ss} $ for $ \iota $-invariant $ (4,4) $ curves in $ \PP^1 \times \PP^1 $ can be described as follows:
	\begin{enumerate}[(i)]
		\item $ R_0=G^0=(\CC^*)^2 $ and in this case
		$$Z_{R_0}=Z_{R_0}^{ss}=G \cdot Z_{R_0}^{ss}=\{ x_0^2x_1^2y_0^2y_1^2 \}; $$
		\item $ R_1=\{ (t,t)\in G^0: t\in \CC^* \}\cong \CC^* $ and in this case
		$$ Z_{R_1}=\PP\{ Ax_0^4y_1^4+Bx_0^3x_1y_0y_1^3+Cx_0^2x_1^2y_0^2y_1^2+Dx_0x_1^3y_0^3y_1+Ex_1^4y_0^4 \}=\PP(\CC[x_0y_1, x_1y_0]_4)\cong \PP^4, $$
		$$ Z_{R_1}^{ss}=\PP^4 \smallsetminus \{ A=B=C=0, \ C=D=E=0 \}, $$
		\begin{align*}
		G\cdot Z_{R_1}^{ss}= Z_{R_1}^{ss} \cup  \PP\{ A'x_0^4y_0^4+B'x_0^3x_1y_0^3y_1+C'x_0^2x_1^2y_0^2y_1^2+D'x_0x_1^3y_0y_1^3+E'x_1^4y_1^4 \} \\
		 \smallsetminus \{ A'=B'=C'=0, \ C'=D'=E'=0 \};
		\end{align*}
		\item $ R_2=\{ (1,t)\in G^0: t\in \CC^* \}\cong \CC^* $ and in this case
		$$ Z_{R_2}=\PP\{ ax_0^4y_0^2y_1^2+bx_0^2x_1^2y_0^2y_1^2+cx_1^4y_0^2y_1^2 \}=\PP(y_0^2y_1^2 \cdot \CC[x_0^2, x_1^2]_2)\cong \PP^2 $$
		$$ Z_{R_2}^{ss}=\PP^2 \smallsetminus \{ a=b=0,\ b=c=0 \}, $$
		$$ G \cdot Z_{R_2}^{ss}=Z_{R_2}^{ss} \cup \PP\{ a'x_0^2x_1^2y_0^4+b'x_0^2x_1^2y_0^2y_1^2+c'x_0^2x_1^2y_1^4 \} \smallsetminus \{ a'=b'=0,\ b'=c'=0 \}. $$
	\end{enumerate}
Moreover, the following holds:
$$ G\cdot Z_{R_1}^{ss} \cap G \cdot Z_{R_2}^{ss}=Z_{R_0}. $$
\end{prop}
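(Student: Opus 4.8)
The plan is to prove the two inclusions separately, using the explicit descriptions of $G\cdot Z_{R_1}^{ss}$ and $G\cdot Z_{R_2}^{ss}$ already recorded in parts (ii) and (iii). Write $V_1=\PP(\CC[x_0y_1,x_1y_0]_4)$ and $V_1'=\PP(\CC[x_0y_0,x_1y_1]_4)$ for the two copies of $\PP^4$ whose semistable loci make up $G\cdot Z_{R_1}^{ss}$, and $V_2=\PP(y_0^2y_1^2\,\CC[x_0^2,x_1^2]_2)$, $V_2'=\PP(x_0^2x_1^2\,\CC[y_0^2,y_1^2]_2)$ for the two copies of $\PP^2$ whose semistable loci make up $G\cdot Z_{R_2}^{ss}$. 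The crucial observation is that each of these four linear subspaces is a \emph{coordinate} subspace of $\PP^{12}$, spanned by a subset of the thirteen monomials $x_0^ix_1^{4-i}y_0^jy_1^{4-j}$: expanding the generators shows that $V_1$, $V_1'$, $V_2$, $V_2'$ are cut out respectively by the conditions $i+j=4$, $i=j$, $j=2$ and $i=2$ on the exponents.

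For the inclusion $\supseteq$, I would simply note that the point $Z_{R_0}=\{x_0^2x_1^2y_0^2y_1^2\}$, i.e.\ the monomial with $i=j=2$, lies in both $Z_{R_1}^{ss}$ and $Z_{R_2}^{ss}$. This is immediate from the descriptions in (ii) and (iii): this point has only its $C$-coordinate (resp.\ $b$-coordinate) nonzero, so it avoids the excluded loci $\{A=B=C=0\}\cup\{C=D=E=0\}$ and $\{a=b=0\}\cup\{b=c=0\}$, and therefore lies in $G\cdot Z_{R_1}^{ss}\cap G\cdot Z_{R_2}^{ss}$.

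For the inclusion $\subseteq$, I would bound the orbits by their ambient linear subspaces and distribute the intersection over the two unions:
\[
G\cdot Z_{R_1}^{ss}\cap G\cdot Z_{R_2}^{ss}\subseteq (V_1\cup V_1')\cap(V_2\cup V_2')=(V_1\cap V_2)\cup(V_1\cap V_2')\cup(V_1'\cap V_2)\cup(V_1'\cap V_2').
\]
Since the four subspaces are coordinate subspaces, each cross term is spanned by the monomials satisfying both defining conditions, and the four systems $\{i+j=4,\ j=2\}$, $\{i+j=4,\ i=2\}$, $\{i=j,\ j=2\}$, $\{i=j,\ i=2\}$ each admit the unique solution $(i,j)=(2,2)$. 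Hence every cross term equals the single point $x_0^2x_1^2y_0^2y_1^2=Z_{R_0}$, and the right-hand side collapses to $Z_{R_0}$, completing the inclusion.

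The whole argument reduces to bookkeeping of monomial exponents, so I do not anticipate a genuine obstacle; the only point demanding care is verifying that no two of the four subspaces share a monomial other than $x_0^2x_1^2y_0^2y_1^2$, which is exactly the content of the four elementary linear systems above.
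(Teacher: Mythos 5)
Your verification of the final claim $G\cdot Z_{R_1}^{ss}\cap G\cdot Z_{R_2}^{ss}=Z_{R_0}$ is correct and complete: the four ambient linear subspaces are indeed coordinate subspaces of $X\cong\PP^{12}$ cut out by the exponent conditions $i+j=4$, $i=j$, $j=2$, $i=2$, each pairwise intersection is spanned by the unique common monomial $x_0^2x_1^2y_0^2y_1^2$, and that point visibly avoids the excluded loci in the descriptions of $Z_{R_1}^{ss}$ and $Z_{R_2}^{ss}$. This monomial bookkeeping is essentially what the paper leaves implicit for this part of the statement.

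The problem is that this is only the last displayed equality of the proposition; the rest of the statement is assumed rather than proved. You explicitly quote ``the explicit descriptions of $G\cdot Z_{R_1}^{ss}$ and $G\cdot Z_{R_2}^{ss}$ already recorded in parts (ii) and (iii)'', but parts (i)--(iii) are the content to be established, and they are where the paper's actual argument lives. Concretely, what is missing is: (a) the determination of the indexing set $\calR$ --- a connected reductive subgroup $R\subset G$ arising as the identity component of the stabiliser of a strictly polystable point must be a subtorus of $G^0=(\CC^*)^2$, hence of rank $1$ or $2$; the rank-$2$ case forces $R=R_0$ with $Z_{R_0}$ the single central monomial, while for rank $1$ the fixed locus $Z_R$ is spanned by the weight vectors on the line through the origin of the Hilbert diagram orthogonal to $\mathrm{Lie}(R)$, and only the two coordinate axes and the two diagonals carry weights other than $(0,0)$, which up to the Weyl group $D_8$ leaves exactly $R_1$ and $R_2$; (b) the resulting identifications $Z_{R_1}\cong\PP^4$ and $Z_{R_2}\cong\PP^2$ with the stated monomial spans; (c) the computation of $Z_{R_i}^{ss}$, which requires applying the semistability criteria of Shah recalled in Section \S\ref{sec:background} to the curves parametrized by these subspaces (the excluded loci $\{A=B=C=0\}$, $\{C=D=E=0\}$, etc.\ are exactly the unstable configurations); and (d) the description of $G\cdot Z_{R_i}^{ss}$ as a union of precisely two components, which uses that $G^0$ preserves each coordinate subspace while the residual $D_8$-action carries the anti-diagonal (resp.\ the $x$-axis) of the Hilbert diagram only to the main diagonal (resp.\ the $y$-axis). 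Without (a)--(d) the proposal does not prove the proposition.
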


We recall that Kirwan's partial desingularization process consists of successively blowing up $ X^{ss} $ along the (strict transforms of the) loci $ G\cdot Z_R^{ss} $ in order of dim$ R $, to obtain the space $ \widetilde{X}^{ss} $, and then taking the induced GIT quotient $ \widetilde{X}/\!\!/G $ with respect to a suitable linearization. 

In our situation, we get the diagram
$$ \begin{tikzcd}
\widetilde{X}^{ss}=(\Bl_{G\cdot Z_{R_2,1}^{ss}}X_2^{ss})^{ss} \arrow{r} \arrow{d}& X_2^{ss}=(\Bl_{G\cdot Z_{R_1,1}^{ss}}X_1^{ss})^{ss} \arrow{r} & X_1^{ss}=(\Bl_{Z_{R_0}}X^{ss})^{ss} \arrow{r} & X^{ss} \arrow{d} \\
M^K \arrow{rrr}& && M^{GIT} .
\end{tikzcd} $$

 The space $ \widetilde{X}^{ss} $ is obtained by  blowing up firstly the point $ Z_{R_0}^{ss} $, followed by the blow-up of $ G\cdot Z_{R_1,1}^{ss} $, namely the strict transform of the locus $ G\cdot Z_{R_1}^{ss} $ under the first bow-up. In the end we need to blow-up the strict transform $ G\cdot Z_{R_2, 1}^{ss} $ of the orbit $ G\cdot Z_{R_2}^{ss} $. We also observe that the third blow-up commutes with the second one, because the strict transforms
$$ G\cdot Z_{R_1,1}^{ss} \cap G\cdot Z_{R_2, 1}^{ss} = \varnothing $$
are disjoint. Thus we find:

\begin{definition}\label{def:Mtilde}
The \textit{Kirwan blow-up} $ M^K:=\widetilde{X}/\!\!/G \rightarrow M^{GIT} $ is defined as the GIT quotient of the blown-up variety $ \widetilde{X}^{ss} $ constructed above.
\end{definition}

 Intrinsically at the level of moduli spaces, $ M^K $ is obtained by first blowing up the point $ G\cdot Z_{R_0}/\!\!/G $ corresponding to the union of the four double edges (cf. Proposition \ref{prop:polystable} (iii)). Then one needs to blow-up the strict transform $ \Bl_{G\cdot Z_{R_0}/\!\!/G}(G\cdot Z_{R_1} /\!\!/G) $ of the threefold parametrizing the unions of four conics (cf. Proposition \ref{prop:polystable} (ii)). Eventually the blow-up of the curve $ \Bl_{G\cdot Z_{R_0}/\!\!/G}(G\cdot Z_{R_2} /\!\!/G) $ corresponding to the union of two skew double edges and two skew lines (cf. Proposition \ref{prop:polystable} (i)) completes the construction of $M^K$. Nevertheless, for computational reasons, we will prefer the description at the level of parameter spaces.

\section{Cohomology of the Kirwan blow-up}

This Section is devoted to the proof of

\begin{theorem}\label{thm:cohoblow}
	The Hilbert-Poincar\'{e} polynomial of the Kirwan blow-up $ M^K $ is
	$$ P_t(M^K)=1+4t^2+8t^4+13t^6+18t^8+20t^{10}+18t^{12}+13t^{14}+8t^{16}+4t^{18}+t^{20}.$$
\end{theorem}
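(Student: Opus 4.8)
The plan is to transfer the entire computation to the equivariant cohomology of the blown-up semistable locus. By construction $G$ acts on $\widetilde{X}^{ss}$ with at most finite stabilizers, so the projection $\widetilde{X}^{ss}\to \widetilde{X}/\!\!/G = M^K$ induces a rational isomorphism $H^*(M^K)\cong H^*_G(\widetilde{X}^{ss})$; thus $P_t(M^K)=P_t^G(\widetilde{X}^{ss})$. It then suffices to follow how the $G$-equivariant Poincaré series evolves along the chain
$$\widetilde{X}^{ss}=X_3^{ss}\to X_2^{ss}\to X_1^{ss}\to X^{ss},$$
starting from $P_t^G(X^{ss})$ of Proposition \ref{prop:seriesss} and adding one correction per blow-up, taken in order of decreasing $\dim R_i$ (first $R_0$, then the disjoint strict transforms for $R_1$ and $R_2$).

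For a single step I would split the correction exactly as announced in the introduction. Writing $Y_i = G\cdot Z_{R_i,i}^{ss}$ for the (strict transform of the) center, of codimension $c_i$, the ordinary equivariant blow-up formula gives the \emph{main contribution}
$$P_t^G(\Bl_{Y_i}X_i^{ss}) = P_t^G(X_i^{ss}) + P_t^G(Y_i)\,(t^2+t^4+\cdots+t^{2(c_i-1)}),$$
where $P_t^G(Y_i)=P_t^{N(R_i)}(Z_{R_i,i}^{ss})$ because $Y_i\cong G\times_{N(R_i)}Z_{R_i,i}^{ss}$ and the two components of $Y_i$ are exchanged by the Weyl group. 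The passage from the full blow-up to the new semistable locus $X_{i+1}^{ss}$ removes exactly the points of the new exceptional divisor that remain fixed by a positive-dimensional subgroup of $R_i$; this removal, computed through the equivariant perfection of the resulting stratification, is the \emph{extra contribution} and is governed by the $R_i$-weight decomposition of the normal bundle of $Z_{R_i}^{ss}$.

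Concretely I would run the three cases from Proposition \ref{prop:blow-up}. For $R_0=(\CC^*)^2$ the center is the single $G$-fixed point $\{x_0^2x_1^2y_0^2y_1^2\}$ of codimension $12$, so the normal space is the $12$-dimensional tangent space to $\PP^{12}$, whose monomial directions I decompose into weight spaces for $(\CC^*)^2$ by reading off the Hilbert diagram; the semistable part of $\PP(\calN)$ is then determined by a rank-two toric GIT computation, and the contribution is obtained after taking $D_8=\pi_0(G)$-invariants. For $R_1$ and $R_2$, both isomorphic to $\CC^*$, the centers are (open subsets of) $\PP^4$ and $\PP^2$; here the normal bundle splits as $\calN^+\oplus\calN^-$ according to the sign of the $R_i$-weight (for $R_1$ the eight normal weights are $\{-8,-4,-4,-4,+4,+4,+4,+8\}$, giving $\dim\calN^\pm=4$), and the semistable part of the exceptional divisor is $\PP(\calN)\smallsetminus(\PP(\calN^+)\cup\PP(\calN^-))$, whose residual-torus-equivariant cohomology is immediate. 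In each rank-one case I must also remember that the center is the strict transform after the $R_0$-blow-up, since $G\cdot Z_{R_1}^{ss}$ and $G\cdot Z_{R_2}^{ss}$ meet precisely along $Z_{R_0}$.

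The main obstacle is the finite-group bookkeeping: at every stage the relevant contribution is the invariant part, under $N(R_i)/R_i$ together with $\pi_0(G)=D_8$, of a tensor product of two representations — the cohomology of the center $Z_{R_i}^{ss}$ and the cohomology along the projectivized normal directions — and keeping track of how $D_8$ simultaneously permutes the monomial weights and the two components of each center is where errors are most likely. A further delicate point is verifying that the semistable locus of the rank-two exceptional divisor over $Z_{R_0}$ is computed correctly. Finally, since each separate correction is an infinite series with denominator $(1-t^4)(1-t^8)$, the true test is that these denominators cancel in the sum, leaving a genuine polynomial; I would confirm the answer by checking that it is palindromic of degree $20$, as forced by Poincaré duality on the $10$-dimensional compact orbifold $M^K$.
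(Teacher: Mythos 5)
Your proposal follows essentially the same route as the paper: identify $P_t(M^K)$ with $P_t^G(\widetilde{X}^{ss})$, start from $P_{t}^{G}(X^{ss})$ of Proposition \ref{prop:seriesss}, and for each $R_i$ add the main term from the equivariant blow-up formula and subtract the extra term coming from the HKKN stratification of the projectivized normal slice, exactly as organized in Theorem \ref{thm:cohkirblow} and Propositions \ref{prop:main0}--\ref{prop:extra2}, with the same centres, codimensions and normal weights. The only imprecision is your description of the discarded locus as the points ``fixed by a positive-dimensional subgroup of $R_i$'' (it is the union of the nonzero unstable strata for the induced $R_i$-linearization), but since you then compute it via the equivariantly perfect stratification governed by the normal weights, this does not affect the argument.
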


In the first part of the Section, we recall the general theory to compute the Betti numbers of the Kirwan blow-up $\widetilde{X}/\!\!/G \rightarrow X/\!\!/G$ of a GIT quotient. Since $\widetilde{X}/\!\!/G$ has only finite quotient singularities, its rational cohomology coincides with the equivariant cohomology of the semistable locus $\widetilde{X}^{ss}$, which in turn can be computed from the equivariant cohomology of $X^{ss}$ corrected by an error term (see Theorem \ref{thm:cohkirblow}). This error term is divided into a main and extra contribution: the former takes into account the geometry of the centres of the blow-ups and the latter the action of $G$ on the exceptional divisors.

In the second and third part of this Section, we complete the computation of the Betti numbers of $M^K$ by calculating the main and extra terms appearing in Theorem \ref{thm:cohkirblow} for our case. This concludes the proof of Theorem \ref{thm:cohoblow}.  
 
\label{sec:cohomology}
\subsection{General setting} \label{subsec:settingblowup} Kirwan in \cite{Kir85} explained the effect of the desingularization on the equivariant Poincar\'{e} series. We consider again the setting, as in Section \S \ref{subsec:blow}, of a nonsingular projective variety $ X $ together with an action of a reductive group $ G $ and a $G$-linearization. Assume that $ R $ is a connected reductive subgroup with the property that the semistable fixed point set $ Z_R^{ss}\subset X^{ss} $ is non-empty, but that $ Z_{R'}^{ss}=\varnothing $ for all subgroups $ R'\subset G $ of higher dimension than $ R $.

Let $ \pi:\hat{X}\rightarrow X^{ss}$ be the blow-up of $ X^{ss} $ along $ G\cdot Z_R^{ss} $. Then the equivariant cohomology of $ \hat{X} $ is related to that of the exceptional divisor $ E $ by the formula:
\begin{equation}\label{cohomologyblowup}
H^*_G(\hat{X})=H^*_G(X^{ss})\oplus H^*_G(E)/H^*_G(G\cdot Z_R^{ss})
\end{equation}
(see \cite[\S 4.6]{GH78}, \cite[7.2]{Kir85}). If $ \calN^R $ denotes the normal bundle to $ G\cdot Z_R^{ss} $ in $ X^{ss} $, then the equivariant cohomology of the exceptional divisor $ E=\PP \calN^R $ can be computed via a degenerating spectral sequence, namely
$$ H^*_G(E)=H^*_G(G\cdot Z_R^{ss})(1+...+t^{2(\rk \calN^R-1)}). $$
Kirwan proved (\cite[5.10]{Kir85}) that $ G\cdot Z_{R}^{ss} $ is algebraically isomorphic to $ G\times_{N(R)} Z_{R}^{ss} $, where $ N(R)\subset G $ is the normaliser of $ R$, hence we can calculate
\begin{equation}\label{formula:rank}
\rk \calN^R=\dim X -\dim G\cdot Z_{R}^{ss}= \dim X-(\dim G+\dim Z_{R}^{ss} -\dim N(R))
\end{equation}
and
$$ H^*_G(G\cdot Z_R^{ss})=H^*_{N(R)}(Z_R^{ss}). $$ 
Therefore from (\ref{cohomologyblowup}), it follows that
$$ P_t^G(\hat{X})=P_t^G(X^{ss})+P_t^{N(R)}(Z_R^{ss})(t^2+...+t^{2(\rk \calN^R-1)}).$$
By considering the \textit{HKKN stratification} $ \{ S_{\beta} \}_{\beta \in \calB} $ associated to the induced action of $ G $ on $ \hat{X} $ (see Theorem \ref{thm:strata}), we can apply Theorem \ref{thm:equi} to deduce the equivariant Hilbert-Poincar\'{e} series of the semistable locus:
\begin{equation}
P_{t}^{G}(\hat{X}^{ss})=P_{t}^{G}(\hat{X})-\sum_{0\neq\beta \in \hat{\calB} }t^{2 \mathrm{codim}(\hat{S}_{\beta})}P_{t}^{G}(\hat{S}_\beta).
\end{equation}
In order to apply this formula, we have to determine the indexing set $ \hat{\calB} $. For this, we choose a point $ x\in Z_R^{ss} $ and consider the normal vector space $ \calN_x^R $ to $ G\cdot Z_R^{ss} $ in $ X^{ss} $ at this point. Since the action of $ R $ on $ X^{ss} $ leaves this point $ x $ fixed, there is a natural induced representation $ \rho: R \rightarrow \GL(\calN_x^R) $ of $ R $ on this vector space. Let $ \calB(\rho) $ denote the indexing set of the stratification of the $ R $-action on the projective normal slice $ \PP \calN_x^R $. For each $ \beta' \in \calB(\rho) $, we have the subspaces $ Z_{\beta', \rho}$, $ Z_{\beta', \rho}^{ss}$ and $ S_{\beta', \rho} $ defined as in Section \S \ref{subsec:strata} but with respect to the action of $ R $ on $ \PP \calN_x^R $. 

In \cite[\S 7]{Kir85}, it is proved that $ \hat{\calB} $ can be identified with a subset of $ \calB(\rho) $. Given $ \beta \in \hat{\calB} $, the Weyl group orbit $ W(G) $ of $ \beta $ decomposes into a finite number of $ W(R) $ orbits. There is a unique $ \beta'\in \calB(\rho) $ in each $ W(R) $ orbit contained in the $ W(G) $ orbit of $ \beta $. We thus denote by $ w(\beta', R, G) $ the number of $ \beta' \in \calB(\rho) $ lying in the Weyl group orbit $ W(G)\cdot \beta $. 

For each $ \beta' \in \hat{\calB} \subset \calB(\rho) $, there is an $ (N(R)\cap \Stab \beta') $-equivariant fibration
$$ \pi:Z_{\beta', R}^{ss}:=Z_{\beta'}^{ss}\cap \pi^{-1}(Z_R^{ss})\rightarrow Z_R^{ss} $$
 with all fibres isomorphic to $ Z_{\beta', \rho}^{ss} $. Like for each stratum $ \hat{S}_{\beta'} $, its codimension in $ \hat{X} $ is the same as the codimension of $ \hat{S}_{\beta', \rho} $ in $ \PP \calN_x^R $, denoted by $ d(\PP \calN^R, \beta') $ and its Hilbert-Poincar\'{e} series $ P^G_t(\hat{S}_{\beta'}) $ is the same as $ P_t^{N(R)\cap \Stab \beta}(Z_{\beta', R}^{ss}) $.

A repeated application of this argument leads to a formula to compute inductively the equivariant cohomology $ H^*_G(\widetilde{X}^{ss}) $ of the semistable locus $ \widetilde{X}^{ss} $, whose GIT quotient gives the \textit{Kirwan blow-up}. Since $ G $ acts on $ \widetilde{X}^{ss} $ with finite stabilisers, its equivariant Hilbert-Poincar\'{e} polynomial coincides with that of the partial desingularization $ \widetilde{X}/\!\!/G $. We summarise all the previous theory under the following
\begin{theorem}\cite[7.4]{Kir85} \label{thm:cohkirblow}In the above setting,
	the cohomology of the Kirwan blow-up is given by:
	$$ P_t(\widetilde{X}/\!\!/G)=P_{t}^G(\widetilde{X}^{ss})=P_{t}^G(X^{ss})+\sum_{R\in\calR}A_{R}(t), $$
	where the error term $ A_{R}(t) $ can be divided into main and extra terms, as follows:
\begin{align}
A_R(t) = & \ P_t^{N}(Z_R^{ss})(t^2+...+t^{2(\rk \calN^R-1)}) \tag{Main term}\\
&-\sum_{0\neq \beta'\in\calB(\rho)}\frac{1}{w(\beta', R, G)}t^{2d(\PP\calN^R, \beta')}P_t^{N\cap \Stab \beta'}(Z_{\beta', R}^{ss}). \tag{Extra term}
\end{align}
\end{theorem}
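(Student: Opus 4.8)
The plan is to prove the formula by induction on the sequence of modifications, assembling the single-step identities recorded in Section \S\ref{subsec:settingblowup} and then identifying the equivariant cohomology of the final semistable locus with the rational cohomology of the quotient. First I would treat the base case of a single blow-up. Fix $R\in\calR(r)$ of maximal dimension and let $\pi:\hat{X}\rightarrow X^{ss}$ be the blow-up along $G\cdot Z_R^{ss}$. Combining the equivariant blow-up decomposition (\ref{cohomologyblowup}) with the projective-bundle formula for $H^*_G(E)$, the isomorphism $G\cdot Z_R^{ss}\cong G\times_{N(R)}Z_R^{ss}$, and the identity $H^*_G(G\cdot Z_R^{ss})=H^*_{N(R)}(Z_R^{ss})$, one obtains directly
$$ P_t^G(\hat{X})=P_t^G(X^{ss})+P_t^{N(R)}(Z_R^{ss})\bigl(t^2+\dots+t^{2(\rk\calN^R-1)}\bigr), $$
which is precisely the main term; the rank is evaluated by (\ref{formula:rank}). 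This step is formal once the blow-up formula is granted.

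The substantive part is the passage from $\hat{X}$ to the new semistable locus $\hat{X}^{ss}$. Here I would apply the equivariant perfection result, Theorem \ref{thm:equi}, to the \emph{HKKN stratification} $\{\hat{S}_\beta\}_{\beta\in\hat{\calB}}$ of the induced $G$-action on $\hat{X}$, giving
$$ P_t^G(\hat{X}^{ss})=P_t^G(\hat{X})-\sum_{0\neq\beta\in\hat{\calB}}t^{2\codim\hat{S}_\beta}\,P_t^G(\hat{S}_\beta). $$
The crux is to make this correction explicit. The geometric input is that all new unstable strata are supported over the centre $G\cdot Z_R^{ss}$ and are governed by the linear representation $\rho:R\rightarrow\GL(\calN_x^R)$ on the normal slice at a point $x\in Z_R^{ss}$. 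Following \cite[\S7]{Kir85} I would identify $\hat{\calB}$ with a subset of the index set $\calB(\rho)$ of the stratification of $\PP\calN_x^R$, match codimensions via $\codim\hat{S}_{\beta'}=d(\PP\calN^R,\beta')$, and use the $(N(R)\cap\Stab\beta')$-equivariant fibration $Z_{\beta',R}^{ss}\rightarrow Z_R^{ss}$ with fibre $Z_{\beta',\rho}^{ss}$ to evaluate $P_t^G(\hat{S}_{\beta'})=P_t^{N(R)\cap\Stab\beta'}(Z_{\beta',R}^{ss})$. The factor $1/w(\beta',R,G)$ enters because a single $W(G)$-orbit of $\beta$ decomposes into $w(\beta',R,G)$ distinct $W(R)$-orbits, each contributing one $\beta'\in\calB(\rho)$, so summing over $\calB(\rho)$ over-counts by exactly that amount. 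This yields the extra term, hence $P_t^G(\hat{X}^{ss})=P_t^G(X^{ss})+A_R(t)$.

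The induction then runs over the chain $\widetilde{X}^{ss}=X_r^{ss}\rightarrow\dots\rightarrow X^{ss}$. Since each modification strictly lowers the maximal dimension of a reductive subgroup fixing a semistable point (so $\calR_1\subsetneq\calR$), the process terminates after at most $r$ steps, and applying the single-step computation at each centre shows the corrections accumulate additively over $R\in\calR$. Finally, because $G$ acts on $\widetilde{X}^{ss}$ with only finite stabilizers, the quotient $\widetilde{X}/\!\!/G$ has at worst finite quotient singularities; its rational cohomology therefore agrees with $H^*_G(\widetilde{X}^{ss})$, giving the first equality $P_t(\widetilde{X}/\!\!/G)=P_t^G(\widetilde{X}^{ss})$ and completing the formula.

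I expect the main obstacle to lie in the second paragraph: establishing the embedding $\hat{\calB}\hookrightarrow\calB(\rho)$, verifying that each new stratum fibres over $Z_R^{ss}$ with the asserted normal-slice fibre, and correctly accounting for the Weyl-group over-counting factor $w(\beta',R,G)$. This requires precise control of the geometry of the normal cone and of the representation $\rho$; the remaining ingredients are either formal (the equivariant blow-up formula) or purely inductive.
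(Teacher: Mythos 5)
Your proposal is correct and takes essentially the same route as the paper, which (following \cite[\S 7]{Kir85}) derives the main term from the equivariant blow-up decomposition (\ref{cohomologyblowup}) together with the projective-bundle formula and the rank computation (\ref{formula:rank}), and obtains the extra term by applying Theorem \ref{thm:equi} to the HKKN stratification of $\hat{X}$, identifying $\hat{\calB}$ inside $\calB(\rho)$ via the same fibration $Z_{\beta',R}^{ss}\rightarrow Z_R^{ss}$ and the same Weyl-orbit count $w(\beta', R, G)$, before iterating over the chain of modifications and using finite stabilisers to identify $P_t(\widetilde{X}/\!\!/G)$ with $P_t^G(\widetilde{X}^{ss})$. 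Your account of the over-counting factor $1/w(\beta',R,G)$ matches the paper's exactly, so there is nothing to correct.
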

 \begin{remark} \label{rmk:cohextra}(cf. \cite[7.2]{Kir85} and \cite[4.1 (4)]{Kir88}) If $ Z_{\beta', \rho}^{ss}=Z_{\beta', \rho} $, the spectral sequence of rational equivariant cohomology associated to the fibration $ \pi:Z_{\beta',R}^{ss}\rightarrow Z_R^{ss} $ degenerates and hence
 	$$ P_t^{N\cap \Stab \beta'}(Z_{\beta', R}^{ss})=P_t^{N\cap \Stab \beta'}(Z_{R}^{ss})\cdot P_t(Z_{\beta', \rho}). $$
 \end{remark}

Due to the role they play in the aforementioned results, we compute the normalisers of the reductive subgroups in $ \calR $.

\begin{prop} \label{prop:normal}
	The normalisers of the reductive subgroups in $ \calR=\{ R_0, R_1, R_2 \} $ (see Proposition \ref{prop:blow-up}) are given as follows:
	\begin{enumerate}[(i)]
		\item $ N(R_0)=G $;
		\item $ N(R_1)= (\CC^*)^2 \rtimes (\zz \times \zz) $ with action of the first $ \zz $ by $ (a, b)\leftrightarrow (b, a) $ and the second $ \zz $ by $ (a, b)\leftrightarrow (b^{-1}, a^{-1}) $;
		\item $ N(R_2)=G_1 \times G_2 $.
		\end{enumerate}
\end{prop}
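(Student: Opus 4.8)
The plan is to compute each normaliser $N(R_i)$ directly from the explicit description of $G$ and its embedding $G \cong (G_1 \times G_2) \rtimes \langle \gamma \rangle$, exploiting that each $R_i$ is a subtorus of the maximal torus $G^0 = (\CC^*)^2$. Recall $N(R) = \{ g \in G : gRg^{-1} = R \}$, so I must track how the generators of $G$ act by conjugation on each $R_i$ and retain exactly those elements preserving the subgroup. Since every $R_i \subseteq G^0$ and $G^0$ is abelian, the whole torus always centralises (hence normalises) each $R_i$; thus the real work is to determine which elements of $\pi_0(G) = D_8$ survive, after accounting for the semidirect-product structure.

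First I would treat $R_0 = G^0$: being the connected component of the identity, it is characteristic in $G$, so $N(R_0) = G$ is immediate. For $R_1 = \{(t,t)\}$ and $R_2 = \{(1,t)\}$, I would write down the conjugation action of the three involutions generating $D_8$ explicitly. Using the stated action on $(\CC^*)^2$ — the first two involutions invert each factor, $(a,b) \mapsto (a^{-1}, b)$ and $(a,b) \mapsto (a, b^{-1})$, while $\gamma$ swaps factors $(a,b) \mapsto (b,a)$ — I would check each of the eight elements of $D_8$ against $R_1$ and $R_2$. For $R_1 = \{(t,t)\}$: the swap $\gamma$ fixes $R_1$ pointwise; the simultaneous inversion $(a,b)\mapsto(a^{-1},b^{-1})$ sends $(t,t) \mapsto (t^{-1},t^{-1}) \in R_1$, so it normalises; and the composite $(a,b)\mapsto(b^{-1},a^{-1})$ also maps $R_1$ to itself. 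The single-factor inversions, however, send $(t,t) \mapsto (t^{-1}, t)$, which leaves $R_1$ only if combined appropriately, so I would verify that exactly a $\zz \times \zz$ worth of $D_8$ survives, giving $N(R_1) = (\CC^*)^2 \rtimes (\zz \times \zz)$ with the two claimed actions $(a,b)\leftrightarrow(b,a)$ and $(a,b)\leftrightarrow(b^{-1},a^{-1})$. For $R_2 = \{(1,t)\}$: here $\gamma$ does \emph{not} preserve $R_2$ since it swaps it with $\{(t,1)\}$, so $\gamma \notin N(R_2)$; checking the remaining generators shows the surviving subgroup is precisely $G_1 \times G_2$, consistent with the fact that $G_i \cong \CC^* \rtimes \zz$ and $R_2$ is normal inside the second factor.

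The main obstacle I anticipate is bookkeeping the semidirect-product action correctly — in particular confirming that the two $\zz$-factors of $N(R_1)$ act via the \emph{specific} involutions claimed, rather than some other pair generating the same abstract group. Since $D_8$ has several order-two subgroups, I must be careful to identify which concrete reflections of the Hilbert diagram (reflection across the main diagonal, versus the composition of both axis-inversions) are the ones fixing the line $\mathrm{Lie}(R_1)$, which is the bisector of the second and third quadrants. The geometric picture helps here: $N(R_i)$ consists exactly of those symmetries of the weight diagram that preserve the line orthogonal to $\mathrm{Lie}(R_i)$ (equivalently, the line $\mathrm{Lie}(R_i)$ itself), so I would cross-check the algebraic computation against the dihedral symmetries of Figure \ref{fig:weights} — the diagonal line through the origin is preserved by the diagonal reflection and by the point-inversion, yielding the Klein four-group, while the horizontal axis (for $R_2$) is preserved by the two axis-reflections but not by the diagonal swap, which interchanges the two candidate rank-one tori.
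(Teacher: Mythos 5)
Your proposal is correct and is essentially the argument the paper leaves implicit: the paper's proof is the single sentence that the result ``follows from the group structure of $G=(\CC^*)^2\rtimes D_8$'', and your computation — the torus centralises each subtorus $R_i$, so one only has to determine the stabiliser of $R_i$ inside $\pi_0(G)=D_8$ by checking the eight conjugation actions — is precisely how that one-liner is carried out. The cross-check against the dihedral symmetries of the Hilbert diagram is a sound sanity check and matches the way the paper identifies the loci $Z_{R_i}$ in Proposition \ref{prop:blow-up}.
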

\begin{proof}
The result follows from the group structure of $G=(\CC^*)^2 \rtimes D_8$.
\end{proof}

\subsection{Main error terms} This subsection is devoted to computing the main error terms for all the three stages of the partial desingularization.

\begin{prop} \label{prop:main0}For the group $ R_0 \cong (\CC^*)^2 $, the main term is 
	\begin{align*}
	P_t^{N(R_0)}(Z_{R_0}^{ss})(t^2+...+t^{2(\rk \calN^{R_0}-1)})&=\frac{t^2+...+t^{22}}{(1-t^4)(1-t^8)}\\
	&\equiv t^2+t^4+2t^6+2t^8+4t^{10} \ \mod t^{11}.\\
	\end{align*} 
\end{prop}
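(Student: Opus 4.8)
The plan is to compute the main term as stated in Theorem~\ref{thm:cohkirblow} for the group $R_0\cong(\CC^*)^2$, which is the first and simplest of the three stages of the desingularization. By Proposition~\ref{prop:blow-up}(i) the fixed locus is a single point, $Z_{R_0}^{ss}=\{x_0^2x_1^2y_0^2y_1^2\}$, and by Proposition~\ref{prop:normal}(i) the normaliser is the whole group, $N(R_0)=G$. So the main term is $P_t^{G}(Z_{R_0}^{ss})(t^2+\dots+t^{2(\rk\calN^{R_0}-1)})$, and the two ingredients I must pin down are the equivariant series of a point and the rank of the normal bundle.

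First I would compute $P_t^{G}(Z_{R_0}^{ss})$. Since $Z_{R_0}^{ss}$ is a single $G$-fixed point, its $G$-equivariant cohomology is just $H^*(BG)$, which is the $\pi_0(G)=D_8$-invariant part of $H^*(BG^0)=H^*(B(\CC^*)^2)=\QQ[c_1,c_2]$. This is exactly the computation already carried out in the derivation of~(\ref{eq:PGX}): the invariant subring is $\QQ[c_1^2+c_2^2,\,c_1^2c_2^2]$, so
\begin{equation*}
P_t^{G}(Z_{R_0}^{ss})=\frac{1}{(1-t^4)(1-t^8)}.
\end{equation*}

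Next I would determine $\rk\calN^{R_0}$ via formula~(\ref{formula:rank}). Here $G\cdot Z_{R_0}^{ss}=Z_{R_0}^{ss}$ is the single fixed point (it is already $G$-invariant), so $\dim G\cdot Z_{R_0}^{ss}=0$, and since $\dim X=\dim\PP^{12}=12$, I get $\rk\calN^{R_0}=12-0=12$. Substituting these into the main-term formula gives $(t^2+t^4+\dots+t^{22})/\big((1-t^4)(1-t^8)\big)$, which is the claimed closed form; the sum in the numerator has $11=\rk\calN^{R_0}-1$ terms, running in steps of $t^2$ from $t^2$ to $t^{22}$. To finish, I would expand this rational function modulo $t^{11}$: writing $1/\big((1-t^4)(1-t^8)\big)=1+t^4+t^8+O(t^{12})$ and multiplying by $t^2+t^4+t^6+t^8+t^{10}+O(t^{12})$ yields $t^2+t^4+2t^6+2t^8+4t^{10}\bmod t^{11}$, matching the stated expansion.

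There is essentially no hard step here: the statement is a direct assembly of two facts established earlier (the invariant-theory computation of $H^*(BG)$ and the dimension count for $\calN^{R_0}$) into the main-term formula, followed by a routine power-series truncation. The only point requiring a moment's care is confirming that $G\cdot Z_{R_0}^{ss}$ really is zero-dimensional so that $\rk\calN^{R_0}=12$, and that the indexing of the numerator sum stops correctly at $t^{2(\rk\calN^{R_0}-1)}=t^{22}$; both follow immediately from Proposition~\ref{prop:blow-up}(i).
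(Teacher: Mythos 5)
Your proof is correct and follows essentially the same route as the paper: identify $Z_{R_0}^{ss}$ as a single point with $N(R_0)=G$, reuse the computation of $H^*(BG)$ from~(\ref{eq:PGX}), and apply~(\ref{formula:rank}) to get $\rk\calN^{R_0}=12$. One small slip in your final expansion: $\frac{1}{(1-t^4)(1-t^8)}=1+t^4+2t^8+O(t^{12})$, not $1+t^4+t^8+O(t^{12})$; with your stated series the product would give $3t^{10}$ rather than the correct $4t^{10}$, so the coefficient $2$ on $t^8$ is needed to reach the claimed answer.
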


\begin{proof}
We saw in Proposition \ref{prop:blow-up} that $ Z_{R_0}^{ss} $ consists of a single point, and in Proposition \ref{prop:normal} the normaliser $ N(R_0)=G $, therefore:
$$ H^*_{N(R_0)}(Z_{R_0}^{ss})=H^*(B N(R_0))=H^*(B G). $$
We already showed in (\ref{eq:PGX}) that $H^*(B G)=\QQ[c_1^2c_2^2, \ c_1^2+c_2^2]$ with $\deg c_1=\deg c_2=2$, so
$$ P_t^{N(R_0)}(Z_{R_0}^{ss})=(1-t^4)^{-1}(1-t^8)^{-1}.$$
In (\ref{formula:rank}) we recalled how to compute the rank of the normal bundle:
$$ \rk \calN^{R_0}= \dim X-(\dim G+\dim Z_{R_0}^{ss} -\dim N(R_0))=12-(2+0-2)=12. $$
\end{proof}
	
\begin{prop} \label{prop:main1} For the group $ R_1 \cong \CC^* $, the main term is 
	\begin{align*}
	P_t^{N(R_1)}(Z_{R_1,1}^{ss})(t^2+...+t^{2(\rk \calN^{R_1}-1)})&=\frac{1+t^2+t^4}{1-t^2}(t^2+...+t^{14})\\
	&\equiv t^2+3t^4+6t^6+9t^8+12t^{10} \ \mod t^{11}.\\
	\end{align*} 
\end{prop}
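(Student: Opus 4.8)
The plan is to compute the main term for $R_1$ following the template of Proposition \ref{prop:main0}, by separately determining the three ingredients that enter the formula $P_t^{N(R_1)}(Z_{R_1,1}^{ss})(t^2+\dots+t^{2(\rk\calN^{R_1}-1)})$: the rank of the normal bundle, the $N(R_1)$-equivariant cohomology of the blow-up centre $Z_{R_1,1}^{ss}$, and then the assembly of these into the stated series. The subtlety here, as compared to the $R_0$ case, is that $Z_{R_1}^{ss}$ is no longer a point but a four-dimensional space, and that we are dealing with the \emph{strict transform} $Z_{R_1,1}^{ss}$ after the first blow-up at $Z_{R_0}$; I will need to keep careful track of how the earlier blow-up modifies the relevant cohomology.

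First I would compute $\rk\calN^{R_1}$ using formula (\ref{formula:rank}). From Proposition \ref{prop:blow-up} we have $\dim Z_{R_1}^{ss}=4$ and from Proposition \ref{prop:normal} that $N(R_1)=(\CC^*)^2\rtimes(\zz\times\zz)$, which has dimension $2$; together with $\dim X=12$ and $\dim G=2$ this gives $\rk\calN^{R_1}=12-(2+4-2)=8$, so that the geometric series factor is $t^2+\dots+t^{2(\rk\calN^{R_1}-1)}=t^2+\dots+t^{14}$, matching the stated answer. The main effort is then to verify that $P_t^{N(R_1)}(Z_{R_1,1}^{ss})=\frac{1+t^2+t^4}{1-t^2}$. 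The expected approach is to analyse the $N(R_1)$-action on $Z_{R_1}^{ss}\cong\PP^4\smallsetminus\{A=B=C=0,\ C=D=E=0\}$, passing to the quotient by the connected component and then taking $\pi_0$-invariants, exactly as in the computation of $P_t^G(X)$ in (\ref{eq:HGX}). Since $R_1$ fixes $Z_{R_1}$ pointwise, the residual group acting effectively is $N(R_1)/R_1$, and the two $\zz$-factors act on the $\PP^4$ by the coordinate symmetries one reads off from the explicit monomial basis; I would identify this action and compute the invariant part of the equivariant cohomology of the complement of the two excised coordinate flats.

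The genuinely delicate step is accounting for the passage from $Z_{R_1}^{ss}$ to its strict transform $Z_{R_1,1}^{ss}$ under the blow-up at the point $Z_{R_0}=\{x_0^2x_1^2y_0^2y_1^2\}$, which lies on $Z_{R_1}^{ss}$ as the coordinate point $C\neq 0$ (all others zero). Blowing up a point in a $\PP^4$ and restricting to the semistable locus alters both the space and the equivariant Poincaré series, and I would need to check that after this modification the series collapses to the clean form $\frac{1+t^2+t^4}{1-t^2}$; I expect this to be where the real bookkeeping lies, since one must confirm that the excised loci and the exceptional contribution interact so as to leave precisely a $\PP^2$-type numerator $1+t^2+t^4$ over the torus factor $\frac{1}{1-t^2}$. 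The final paragraph would then simply multiply $\frac{1+t^2+t^4}{1-t^2}$ by $t^2+\dots+t^{14}$ and expand modulo $t^{11}$ to recover $t^2+3t^4+6t^6+9t^8+12t^{10}$, serving as an arithmetic consistency check on the preceding identifications.
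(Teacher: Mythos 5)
Your computation of $\rk\calN^{R_1}=8$ and the identification of the blow-up centre as the point $p=(0:0:1:0:0)\in Z_{R_1}^{ss}$ are correct, and you have correctly isolated where the difficulty sits. But the two computations that actually constitute the proof are left unspecified, and the method you sketch for the first of them would not work as stated. To get $P_t^{N(R_1)}(Z_{R_1}^{ss})$ you propose to imitate the tensor-product splitting of (\ref{eq:HGX}); that splitting $H^*_{G^0}(X)\cong H^*(BG^0)\otimes H^*(X)$ is available there because $X$ is a \emph{compact} projective manifold, and it fails for the open variety $Z_{R_1}^{ss}=\PP^4\smallsetminus\{A=B=C=0\}\cup\{C=D=E=0\}$. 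The paper instead computes $P_t^N(Z_{R_1})=P_t(\PP^4)P_t(BN)$ for the compact $Z_{R_1}\cong\PP^4$ and then subtracts the unstable strata via the equivariantly perfect HKKN stratification (Theorem \ref{thm:equi}, Remark \ref{rmk:unstable}), with indexing set $\{(0,0),(2,-2),(4,-4)\}$ and stabilisers $(\CC^*)^2\rtimes\zz$. Note also that your remark that ``the residual group acting effectively is $N(R_1)/R_1$'' is a trap: equivariant cohomology does not discard trivially acting subgroups, and the $(1-t^4)^{-2}=P_t(BN)$ denominators coming from the full two-dimensional torus are essential to the answer.

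The more serious gap is the strict transform. You explicitly defer the passage from $Z_{R_1}^{ss}$ to $Z_{R_1,1}^{ss}$ to unspecified ``bookkeeping,'' but this step requires an idea, not bookkeeping: the restriction of the first Kirwan blow-up to $Z_{R_1}^{ss}$ is itself the unique step of Kirwan's procedure for the action of $N(R_1)$ on $Z_{R_1}$, with $R_0$ as the offending reductive subgroup and $p$ as the centre. This lets one apply Theorem \ref{thm:cohkirblow} one level down, yielding
\begin{equation*}
P_t^N(Z_{R_1,1}^{ss})=P_t^N(Z_{R_1}^{ss})+P_t^N(\{p\})(t^2+t^4+t^6)-\sum_{0\neq\beta'}\frac{1}{w(\beta',R_0,N)}\,t^{2d(\PP\calN_p,\beta')}P_t^{N\cap\Stab\beta'}(Z_{\beta',R_0}^{ss}),
\end{equation*}
with $P_t^N(\{p\})=(1-t^4)^{-2}$ by Molien's formula and the extra sum read off from the stratification of the exceptional $\PP\calN_p\cong\PP^3$. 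Only after assembling these three pieces does the series collapse to $\frac{1+t^2+t^4}{1-t^2}$. Without this recursive use of Theorem \ref{thm:cohkirblow} your outline cannot be completed.
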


\begin{proof} For brevity, write $ R=R_1 $ and $ N=N(R_1)=(\CC^*)^2\rtimes (\zz \times \zz) $ (see Proposition \ref{prop:blow-up} and Proposition \ref{prop:normal}). Recall that $ Z_{R, 1}^{ss} $ is the strict transform of $ Z_{R}^{ss} $ in $ X_1^{ss} $ under the first blow-up. 

We saw in Proposition \ref{prop:blow-up} that $Z_R\cong \PP^4$ and
\begin{equation} \label{eq:ZR1}
    \begin{aligned}
    Z_{R}^{ss}= & \ \PP\{ Ax_0^4y_1^4+Bx_0^3x_1y_0y_1^3+C x_0^2x_1^2y_0^2y_1^2+D x_0x_1^3y_0^3y_1+Ex_1^4y_0^4 \} \\
     & \smallsetminus \{ A=B=C=0, \ C=D=E=0 \}.
\end{aligned}
\end{equation}
In this system of coordinates, the centre of the first blow-up consists of the point $p=(0:0:1:0:0)$. Therefore
\begin{equation}\label{eq:ZR1blowup}
    Z_{R, 1}^{ss}=(\Bl_p Z_R^{ss})^{ss},
\end{equation}
because we recall that, after taking the proper transform, one should restrict only to the semistable points in $ X_2 \rightarrow X_1$ for the induced action of $ G $. We want to stress that the Kirwan blow-up is a blow-up, followed by a restriction to the semistable locus.

To compute $P_t^{N}(Z_{R,1}^{ss})$ we notice that we can use Theorem \ref{thm:cohkirblow}. Indeed, the restriction of the first blow-up to $Z_R^{ss}$ coincides with the unique step of Kirwan's procedure applied to the action of $N$ on $Z_R$. Hence by Theorem \ref{thm:cohkirblow}, we obtain: 
$$ P_t^N(Z_{R,1}^{ss})=P_t^N(Z_R^{ss})+P_t^N(\{p\})(t^2+t^4+t^6)-\sum_{0\neq \beta' \in \calB'}\frac{1}{w(\beta', R_0, N)}t^{2d(\PP\calN_p, \beta')}P_t^{N\cap \Stab \beta'}(Z_{\beta', R_0}^{ss}),$$
where $\calB'$ is the indexing set of the HKKN stratification induced on the exceptional divisor $\PP\calN_p\cong \PP^3$. We now clarify how to calculate all the contributions appearing in the equality above.

Firstly, we choose to compute $P_t^N(\{p\})$. The equivariant cohomology of a point is
$$ H^*_N(\{p\})=H^*(BN)=H^*(B(\CC^*)^2)^{\zz \times \zz}=\QQ[c_1, c_2]^{\zz \times \zz},$$
where $c_1$ and $c_2$ are the generating classes of the cohomology of $B(\CC^*)^2$ and have both degree $2$, while the action of $\zz \times \zz$ is described in Proposition \ref{prop:normal}(ii). By Molien's formula we obtain $P_t^N(\{p\})=(1-t^4)^{-2}$.

Secondly, we compute $P_t^N(Z_R^{ss})$. We can once again apply Theorem \ref{thm:equi} and Remark \ref{rmk:unstable}, namely we consider the HKKN equivatiantly perfect stratification induced by the action of $N$ on $Z_R$ and we find:
\begin{equation}\label{eq:main1}
    P_t^N(Z_R^{ss})=P_t^N(Z_R)-\sum_{0 \neq \beta \in \calB}t^{2\codim(S_{\beta})}P_t^{\Stab \beta}(Z_{\beta}^{ss}).
\end{equation}
The indexing set of the previous stratification is $\calB=\{(0,0), (2, -2), (4, -4)\}$ and the data can be summarised as follows:
\begin{center}
        \begin{tabular}{c|c|c|c}
        $\calB\smallsetminus \{(0,0)\}$ & $Z_{\beta}^{ss} \subset Z_R$  & $\Stab \beta$ & $\codim(S_{\beta})$ \\
        \hline
        $(2, -2)$ & $(0:1:0:0:0)$ & $(\CC^*)^2 \rtimes \zz$& 3\rule{0pt}{2.5ex} \\[1ex]
        $(4, -4)$ & $(1:0:0:0:0)$ & $(\CC^*)^2 \rtimes \zz$& 4
    \end{tabular} 
\end{center}
The extension $(\CC^*)^2 \rtimes \zz$ is given by the involution $ (a, b)\leftrightarrow (b^{-1}, a^{-1}) $. Recalling that $P_t^N(Z_R)=P_t(\PP^4)P_t(BN)$, we obtain:
$$ P_t^N(Z_R^{ss})=\frac{1+t^2+t^4+t^6+t^8}{(1-t^4)^2}-\frac{t^6+t^8}{(1-t^2)(1-t^4)}.$$

Finally, we need to consider the contribution coming from the stratification of the exceptional divisor $\PP \calN_p$. The indexing set of this stratification is $\calB'=\{(0,0), \pm(2, -2), \pm(4, -4)\}$ and the data we need to compute are summarised as follows:
\begin{center}
        \begin{tabular}{c|c|c|c|c}
        $\calB\smallsetminus \{(0,0)\}$   & $w(\beta', R_0, N)$ & $N \cap \Stab \beta'$ & $d(\PP\calN_p, \beta')$& $P_t^{N\cap \Stab \beta'}(Z_{\beta', R_0}^{ss})$ \\[1ex]
        \hline
        $\pm (2, -2)$   & 2 & $(\CC^*)^2 \rtimes \zz$& 2& $(1-t^2)^{-1}(1-t^4)^{-1}$ \rule{0pt}{2.5ex} \\[1ex]
        
        $\pm(4, -4)$  & 2 & $(\CC^*)^2 \rtimes \zz$ & 3 & $(1-t^2)^{-1}(1-t^4)^{-1}$\\
    \end{tabular}
\end{center}
The extension $(\CC^*)^2 \rtimes \zz$ is given by the involution $ (a, b)\leftrightarrow (b^{-1}, a^{-1}) $. By Remark \ref{rmk:cohextra}, the equivariant Hilbert-Poincar\'e polynomial of each stratum is
$$ P_t^{N\cap \Stab \beta'}(Z_{\beta', R_0}^{ss})=P_t^{N\cap \Stab \beta'}(\{p\})P_t(\PP^0)=P_t(B((\CC^*)^2\rtimes \zz))=(1-t^2)^{-1}(1-t^4)^{-1}.$$

Combining the three steps of calculations above leads to:
$$P_t^N(Z_{R,1}^{ss})=\frac{1+t^2+t^4+t^6+t^8}{(1-t^4)^2}-\frac{t^6+t^8}{(1-t^2)(1-t^4)}+\frac{t^2+t^4+t^6}{(1-t^4)^2}-\frac{t^4+t^6}{(1-t^2)(1-t^4)}=\frac{1+t^2+t^4}{1-t^2}. $$

To complete the proof of the Proposition \ref{prop:main1} we need to compute the rank of the normal bundle:
	$$ \rk \calN^R= \dim X-(\dim G+\dim Z_{R}^{ss} -\dim N)=12-(2+4-2)=8. $$	
\end{proof}
\begin{prop} \label{prop:main2} For the group $ R_2\cong \CC^* $, the main term is 
	\begin{align*}
	P_t^{N(R_2)}(Z_{R_2,1}^{ss})(t^2+...+t^{2(\rk \calN^{R_2}-1)})&=\frac{1}{1-t^2}(t^2+...+t^{18})\\
	&\equiv t^2+2t^4+3t^6+4t^8+5t^{10} \ \mod t^{11}.\\
	\end{align*} 
\end{prop}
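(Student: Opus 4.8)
The plan is to follow verbatim the strategy of Proposition \ref{prop:main1}, exploiting that the Kirwan modification of $Z_{R_2}^{ss}$ is controlled entirely by the action of $N:=N(R_2)=G_1\times G_2$ on $Z_{R_2}\cong\PP^2$. First I would note that, in the coordinates of Proposition \ref{prop:blow-up}(iii), the only blow-up centre meeting $Z_{R_2}^{ss}$ is the point $q=Z_{R_0}=(0:1:0)$: the third blow-up is disjoint from the strict transform of $G\cdot Z_{R_2}^{ss}$ (since $G\cdot Z_{R_1,1}^{ss}\cap G\cdot Z_{R_2,1}^{ss}=\varnothing$), so $Z_{R_2,1}^{ss}=(\Bl_q Z_{R_2}^{ss})^{ss}$. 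This restricted blow-up is precisely the single step of Kirwan's procedure for $N$ acting on $\PP^2$, with $q$ the unique semistable point fixed by the maximal subtorus $N^0=(\CC^*)^2$. I would then apply Theorem \ref{thm:cohkirblow} to this one-step desingularization, writing $P_t^N(Z_{R_2,1}^{ss})$ as $P_t^N(Z_{R_2}^{ss})$ plus a main term minus an extra term.

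The decisive preliminary is to pin down the $N$-action on $Z_{R_2}$. Since every monomial spanning $Z_{R_2}$ carries the factor $y_0^2y_1^2$, the group $G_2$ acts trivially, and $N$ acts through $G_1=\CC^*\rtimes\zz$ with $\CC^*$-weights $4,0,-4$ on the coordinates $(a:b:c)$, the involution exchanging the two outer ones. Hence the HKKN stratification of $Z_{R_2}$ (Theorem \ref{thm:equi}) has a single nontrivial stratum, namely the two unstable points $(1:0:0)$ and $(0:0:1)$, fused into one $N$-orbit by the Weyl involution, of complex codimension $2$ and with stabiliser $\CC^*\times G_2$. Using $P_t^N(Z_{R_2})=P_t(\PP^2)P_t(BN)=(1+t^2+t^4)(1-t^4)^{-2}$ together with Remark \ref{rmk:unstable}, I would obtain
$$P_t^N(Z_{R_2}^{ss})=\frac{1+t^2+t^4}{(1-t^4)^2}-\frac{t^4}{(1-t^2)(1-t^4)}=\frac{1+t^2-t^6}{(1-t^4)^2}.$$

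For the correction terms the exceptional divisor is $\PP\calN_q\cong\PP^1$, so $\rk\calN_q=2$ and the main term contributes only $P_t^N(\{q\})\,t^2=t^2(1-t^4)^{-2}$. The extra term comes from stratifying $\PP\calN_q$ under $R_0=(\CC^*)^2$: the normal directions to $q$ carry weights $(4,0)$ and $(-4,0)$, yielding two unstable points which form a single $W(N)$-orbit, so $w(\beta',R_0,N)=2$, each of codimension $1$, with stabiliser $N\cap\Stab\beta'\cong(\CC^*)^2\rtimes\zz$ and, by Remark \ref{rmk:cohextra}, equivariant series $(1-t^2)^{-1}(1-t^4)^{-1}$; the sum gives an extra term $t^2(1-t^2)^{-1}(1-t^4)^{-1}$. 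Assembling the three pieces, the rational function $\tfrac{1+t^2-t^6}{(1-t^4)^2}+\tfrac{t^2}{(1-t^4)^2}-\tfrac{t^2}{(1-t^2)(1-t^4)}$ collapses, after factoring the numerator $1+t^2-t^4-t^6=(1-t^2)(1+t^2)^2$, to $P_t^N(Z_{R_2,1}^{ss})=(1-t^2)^{-1}$. Finally, from (\ref{formula:rank}) I compute $\rk\calN^{R_2}=12-(2+2-2)=10$, so the polynomial factor is $t^2+\dots+t^{18}$ and the claimed main term follows. I expect the only delicate point to be the bookkeeping of the group-theoretic data — the trivial $G_2$-action on $Z_{R_2}$, the stabilisers, and the Weyl multiplicity $w=2$ — since once these are correct the final simplification is purely mechanical.
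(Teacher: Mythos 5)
Your proposal is correct and follows essentially the same route as the paper: the same reduction of $Z_{R_2,1}^{ss}$ to the one-step Kirwan procedure for $N(R_2)=G_1\times G_2$ acting on $Z_{R_2}\cong\PP^2$ (noting $G_2$ acts trivially and only the first blow-up centre, the point $(0:1:0)$, meets $Z_{R_2}^{ss}$), the same stratification data for $Z_{R_2}$ and for the exceptional $\PP^1$, and the same rank computation $\rk\calN^{R_2}=10$. In fact your arithmetic is the more reliable one: the paper's written proof simplifies $\tfrac{1+t^2+t^4}{(1-t^4)^2}-\tfrac{t^4}{(1-t^2)(1-t^4)}$ incorrectly to $(1-t^4)^{-2}$ (it equals your $\tfrac{1+t^2-t^6}{(1-t^4)^2}$) and consequently ends with $P_t^N(Z_{R_2,1}^{ss})=(1-t^4)^{-1}$, contradicting the proposition it is proving, whereas your value $(1-t^2)^{-1}$ agrees with the stated result and with the identification $H^*_N(Z_{R_2,1}^{ss})\cong H^*(\PP^1)\otimes H^*(BR_2)^{\pi_0 N}$ used later in Proposition \ref{prop:down2}.
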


\begin{proof}
For brevity, write $ R=R_2 $ and $ N=N(R_2)=G_1 \times G_2 $ (see Proposition \ref{prop:blow-up} and Proposition \ref{prop:normal}). We recall that $ Z_{R, 1}^{ss} $ is the strict transform of $ Z_{R}^{ss} $ in $ X_2^{ss} $ under the second blow-up. 

Proposition \ref{prop:blow-up} describes $Z_R\cong \PP^2$ and
\begin{equation}\label{eq:ZR2}
    Z_{R}^{ss}=\PP\{ Ax_0^4y_0^2y_1^2+Bx_0^2x_1^2y_0^2y_1^2+Cx_1^4y_0^2y_1^2 \}\smallsetminus \{ (0:0:1), (1:0:0) \}.
\end{equation}

In this coordinate system, the centre of the first blow-up corresponds to the point $p=(0:1:0)$. Hence we have
\begin{equation}\label{eq:ZR2blowup}
    Z_{R, 1}^{ss}=(\Bl_p Z_R^{ss})^{ss},
\end{equation}
because, after considering the proper transform, one should restrict only to the semistable points in $ X_2 \rightarrow X_1$ for the induced action of $ G $. We recall that the Kirwan blow-up is a blow-up operation, followed by a restriction to the semistable locus.

In order to calculate $P_t^{N}(Z_{R,1}^{ss})$ we can apply Theorem \ref{thm:cohkirblow}. Indeed, the restriction of the second blow-up to $Z_R^{ss}$ coincides with the unique step of Kirwan's procedure for the action of $N$ on $Z_R$. Hence by Theorem \ref{thm:cohkirblow}, we obtain: 
$$ P_t^N(Z_{R,1}^{ss})=P_t^N(Z_R^{ss})+P_t^N(\{p\})t^2-\sum_{0\neq \beta' \in \calB'}\frac{1}{w(\beta', R_0, N)}t^{2d(\PP\calN_p, \beta')}P_t^{N\cap \Stab \beta'}(Z_{\beta', R_0}^{ss}),$$
where $\calB'$ is the indexing set of the HKKN stratification induced on the exceptional divisor $\PP\calN_p\cong \PP^1$. We now explain how to compute all the contributions appearing in the equality above.

Firstly, we choose to compute $P_t^N(\{p\})$. The equivariant cohomology of a point is
$$ H^*_N(\{p\})=H^*(BN)=H^*(BG_1)\otimes H^*(BG_2)=\QQ[c_1]^{\zz}\otimes \QQ[d_1]^{\zz},$$
where $c_1$ and $d_1$ are the generating classes of the cohomology of $B\CC^*$ and have both degree $2$. In both cases the action of $\zz$ interchanges the cohomology class with its opposite. By Molien's formula we get $P_t^N(\{p\})=(1-t^4)^{-2}$.

Secondly, we calculate $P_t^N(Z_R^{ss})$. We can once again apply Theorem \ref{thm:equi} and Remark \ref{rmk:unstable}, namely we consider the HKKN equivatiantly perfect stratification induced by the action of $N$ on $Z_R$ and we get:
\begin{equation}\label{eq:main2}
    P_t^N(Z_R^{ss})=P_t^N(Z_R)-\sum_{0 \neq \beta \in \calB}t^{2\codim(S_{\beta})}P_t^{\Stab \beta}(Z_{\beta}^{ss}).
\end{equation}
The indexing set of the HKKN stratification is $\calB=\{(0,0), (4, 0)\}$ and the contributions can be summarised as follows:
\begin{center}
        \begin{tabular}{c|c|c|c}
        $\calB\smallsetminus \{(0,0)\}$ & $Z_{\beta}^{ss} \subset Z_R$  & $\Stab \beta$ & $\codim(S_{\beta})$ \\
        \hline
        $(4,0)$ & $(1:0:0)$ & $\CC^* \times G_2$& 2 \rule{0pt}{2.5ex} \\[1ex]
    \end{tabular}
\end{center}
Recalling that $P_t^N(Z_R)=P_t(\PP^4)P_t(BN)$ and $P_t^{\CC^*\times G_2}(\PP^0)=P_t(B(\CC^*\times G_2))=(1-t^2)^{-1}(1-t^4)^{-1}$, we obtain:
$$P_t^N(Z_R^{ss})=\frac{1+t^2+t^4}{(1-t^4)^2}-\frac{t^4}{(1-t^2)(1-t^4)}=\frac{1}{(1-t^4)^2}. $$
Finally, we need to take into consideration the contribution coming from the stratification of the exceptional divisor $\PP \calN_p$. The indexing set of this HKKN stratification is $\calB'=\{(0,0), \pm(4, 0)\}$ and the data we need to calculate are summarised as follows:
\begin{center}
        \begin{tabular}{c|c|c|c|c}
        $\calB\smallsetminus \{(0,0)\}$   & $w(\beta', R_0, N)$ & $N \cap \Stab \beta'$ & $d(\PP\calN_p, \beta')$& $P_t^{N\cap \Stab \beta'}(Z_{\beta', R_0}^{ss})$ \\[1ex]
        \hline
        $\pm(4, 0)$  & 2 & $\CC^* \times G_2$& 1& $(1-t^2)^{-1}(1-t^4)^{-1}$ \rule{0pt}{2.5ex} \\[1ex]
    \end{tabular}
\end{center}
 By Remark \ref{rmk:cohextra}, the equivariant Hilbert-Poincar\'e polynomial of each stratum is
$$ P_t^{N\cap \Stab \beta'}(Z_{\beta', R_0}^{ss})=P_t^{N\cap \Stab \beta'}(\{p\})P_t(\PP^0)=P_t(B(\CC^* \times G_2))=(1-t^2)^{-1}(1-t^4)^{-1}.$$

Putting together the three steps of calculations above leads to:
$$P_t^N(Z_{R,1}^{ss})=\frac{1}{(1-t^4)^2}+\frac{t^2}{(1-t^4)^2}-\frac{t^2}{(1-t^2)(1-t^4)}=\frac{1}{1-t^4}. $$

In order to complete the proof of the Proposition \ref{prop:main2}, we need to compute the rank of the normal bundle:
	$$ \rk \calN^R= \dim X-(\dim G+\dim Z_R^{ss} -\dim N)=12-(2+2-2)=10. $$	
\end{proof}

\subsection{Extra terms} To complete the computation of the contributions $ A_R(t) $, we need to calculate the extra terms, as stated in Theorem \ref{thm:cohkirblow}. The crucial point is to analyse for each $ R\in \calR $ the representation $ \rho: R \rightarrow \Aut (\calN_x^R) $ on the normal slice to the orbit $ G\cdot Z_R^{ss} $ at a generic point $ x\in Z_R^{ss} $. Since here we are dealing only with the local geometry around $ x $, we can restrict to consider the normal slice to the orbit $ G^0\cdot Z_R^{ss} $, which is the connected component of $ G\cdot Z_R^{ss} $ at $ x $.

\begin{lemma}\label{lem:weights0}
	For $ R=R_0$, $ \dim \calN_x^{R_0}=12 $, the weights of the representation $ \rho $ of $ R_0 $ on $ \calN_x^{R_0} $ are described by the diagram in Figure \ref{fig:weights0}.
\end{lemma}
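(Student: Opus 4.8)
The plan is to identify the normal slice explicitly and then read off the torus weights from the ambient projective embedding. First I would observe that, by Proposition \ref{prop:blow-up}(i), the semistable fixed locus $ Z_{R_0}^{ss} $ is the single point $ x=[x_0^2x_1^2y_0^2y_1^2] $, which is fixed by $ R_0=G^0 $. Hence the orbit $ G^0\cdot x=\{x\} $ is zero-dimensional, so the normal slice to this orbit at $ x $ is the whole tangent space $ T_xX $. Since $ X\cong \PP^{12} $, this yields at once $ \dim \calN_x^{R_0}=\dim X=12 $, in agreement with the rank computed in Proposition \ref{prop:main0}.

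For the weights, I would use the standard description of the tangent space to a projective space at a torus-fixed point. Writing $ V:=H^0(\PP^1\times\PP^1, \calO_{\PP^1\times\PP^1}(4,4))^{\iota}\cong \CC^{13} $, the tangent space at the class of a nonzero weight vector $ v\in V $ is
$$ T_{[v]}\PP(V)\cong (V/\CC v)\otimes (\CC v)^{\vee}, $$
so its $ T $-weights are obtained from the thirteen ambient weights listed in (\ref{eq:weight}) (the nodes of the Hilbert diagram, Figure \ref{fig:weights}) by deleting the weight of $ v $ and translating the remaining twelve by $ -\mathrm{wt}(v) $. The key simplification is that our point $ v=x_0^2x_1^2y_0^2y_1^2 $ sits at the centre of the diagram, so $ \mathrm{wt}(v)=(0,0) $ and no translation occurs: the twelve weights of $ \calN_x^{R_0}=T_xX $ are precisely the twelve non-central nodes of the Hilbert diagram, namely $ (\pm 4,\pm 4) $, $ (\pm 4,0) $, $ (0,\pm 4) $ and $ (\pm 2,\pm 2) $. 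Restricting from $ T $ to $ R_0=(\CC^*)^2 $, which is the full maximal torus in this case, leaves these weights unchanged, and they are exactly the configuration recorded in Figure \ref{fig:weights0}.

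The computation is essentially bookkeeping, so I do not expect a serious obstacle; the one point requiring care is the weight-shift convention for the tangent space of projective space. Because the chosen fixed point lies at weight $ (0,0) $, this shift is trivial, and the weight diagram of the normal slice is literally the Hilbert diagram with its central node removed, which is precisely what Figure \ref{fig:weights0} displays.
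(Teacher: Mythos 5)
Your proof is correct and follows essentially the same route as the paper: identify $Z_{R_0}^{ss}$ as the single torus-fixed point at the centre of the Hilbert diagram, note that the orbit is zero-dimensional so the normal slice is all of $T_xX$, and read off the twelve weights as the Hilbert diagram minus its central node. Your explicit remark that the identification $T_{[v]}\PP(V)\cong (V/\CC v)\otimes(\CC v)^{\vee}$ would in general shift the weights by $-\mathrm{wt}(v)$ — trivial here since $\mathrm{wt}(v)=(0,0)$ — is a small extra precision over the paper's wording, not a different argument.
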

\begin{proof}
Each monomial in $H^0(\calO_{\PP^1\times \PP^1}(4,4))^{\iota}$ is an eigenspace for the action of $ R_0=G^0 $. Hence $ H^0(\calO_{\PP^1\times \PP^1}(4,4))^{\iota} $ decomposes as a direct sum of one-dimensional representations of $ R_0 $ with multiplicities one, as described by the Hilbert diagram in Figure \ref{fig:weights}. The tangent space to the orbit $G\cdot Z_{R_0}^{ss}$ at $x=Z_{R_0}^{ss}$ is zero-dimensional and the group $R_0$ acts on it with weight $0$. Therefore the weights of the representation on the normal slice $\calN_x^{R_0} $ are all the ones in $H^0(\calO_{\PP^1\times \PP^1}(4,4))^{\iota}$ except for the origin: they are pictured as black dots in the Hilbert diagram of Figure \ref{fig:weights0}. 
\end{proof}

        \begin{figure}[htpb] 
		\begin{tikzpicture}[scale=.7]
		\draw [->] (-4.5,0) -- (4.5,0);
		\draw [->] (0, -4.5) -- (0, 4.5);
		
		\draw (-4,-4) node[circle,fill, inner sep=2pt]{};
		\draw (-4,0) node[circle,fill, inner sep=2pt]{};
		\draw (-4,4) node[circle,fill, inner sep=2pt]{};
		\draw (-2,-2) node[circle,fill, inner sep=2pt]{};
		\draw (-2,2) node[circle,fill, inner sep=2pt]{};
		\draw (0,-4) node[circle,fill, inner sep=2pt]{};
		\draw (0,4) node[circle,fill, inner sep=2pt]{};
		\draw (2,-2) node[circle,fill, inner sep=2pt]{};
		\draw (2,2) node[circle,fill, inner sep=2pt]{};
		\draw (4,-4) node[circle,fill, inner sep=2pt]{};
		\draw (4,0) node[circle,fill, inner sep=2pt]{};
		\draw (4,4) node[circle,fill, inner sep=2pt]{};

		\draw (4,0)  node[circle,fill,red,inner sep=1pt]{};
		\draw (4,0)  node[circle,draw,red, inner sep=3pt]{};
		\draw (4,-4)  node[circle,fill,red,inner sep=1pt]{};
		\draw (4,-4)  node[circle,draw,red, inner sep=3pt]{};
		\draw (2,-2)  node[circle,fill,red,inner sep=1pt]{};
		\draw (2,-2)  node[circle,draw,red, inner sep=3pt]{};
		\draw (2,0)  node[circle,fill,red,inner sep=1pt]{};
		\draw (2,0)  node[circle,draw,red, inner sep=3pt]{};
		\draw (2,2)  node[circle,fill,red,inner sep=1pt]{};
		\draw (2,2)  node[circle,draw,red, inner sep=3pt]{};
		\draw (4,4)  node[circle,fill,red,inner sep=1pt]{};
		\draw (4,4)  node[circle,draw,red, inner sep=3pt]{};
		\draw (0,4)  node[circle,fill,red,inner sep=1pt]{};
		\draw (0,4)  node[circle,draw,red, inner sep=3pt]{};
		\draw (-2,0)  node[circle,fill,red,inner sep=1pt]{};
		\draw (-2,0)  node[circle,draw,red, inner sep=3pt]{};
		\draw (0,2)  node[circle,fill,red,inner sep=1pt]{};
		\draw (0,2)  node[circle,draw,red, inner sep=3pt]{};
		\draw (-2,2)  node[circle,fill,red,inner sep=1pt]{};
		\draw (-2,2)  node[circle,draw,red, inner sep=3pt]{};
		\draw (-4,0)  node[circle,fill,red,inner sep=1pt]{};
		\draw (-4,0)  node[circle,draw,red, inner sep=3pt]{};
		\draw (-4,4)  node[circle,fill,red,inner sep=1pt]{};
		\draw (-4,4)  node[circle,draw,red, inner sep=3pt]{};
		\draw (-2,-2)  node[circle,fill,red,inner sep=1pt]{};
		\draw (-2,-2)  node[circle,draw,red, inner sep=3pt]{};
		\draw (-4,-4)  node[circle,fill,red,inner sep=1pt]{};
		\draw (-4,-4)  node[circle,draw,red, inner sep=3pt]{};
		\draw (0,-2)  node[circle,fill,red,inner sep=1pt]{};
		\draw (0,-2)  node[circle,draw,red, inner sep=3pt]{};
		\draw (0,-4)  node[circle,fill,red,inner sep=1pt]{};
		\draw (0,-4)  node[circle,draw,red, inner sep=3pt]{};
		
		\draw (6/5,2/5)  node[circle,fill,green,inner sep=1pt]{};
		\draw (6/5,2/5)  node[circle,draw,green, inner sep=3pt]{};
		\draw (8/5,4/5)  node[circle,fill,green,inner sep=1pt]{};
		\draw (8/5,4/5)  node[circle,draw,green, inner sep=3pt]{};
		\draw (12/5,4/5)  node[circle,fill,green,inner sep=1pt]{};
		\draw (12/5,4/5)  node[circle,draw, green,inner sep=3pt]{};
		
		\draw (-6/5,-2/5)  node[circle,fill,green,inner sep=1pt]{};
		\draw (-6/5,-2/5)  node[circle,draw,green, inner sep=3pt]{};
		\draw (-8/5,-4/5)  node[circle,fill,green,inner sep=1pt]{};
		\draw (-8/5,-4/5)  node[circle,draw,green, inner sep=3pt]{};
		\draw (-12/5,-4/5)  node[circle,fill,green,inner sep=1pt]{};
		\draw (-12/5,-4/5)  node[circle,draw, green,inner sep=3pt]{};
		
		\draw (-6/5,2/5)  node[circle,fill,green,inner sep=1pt]{};
		\draw (-6/5,2/5)  node[circle,draw,green, inner sep=3pt]{};
		\draw (-8/5,4/5)  node[circle,fill,green,inner sep=1pt]{};
		\draw (-8/5,4/5)  node[circle,draw,green, inner sep=3pt]{};
		\draw (-12/5,4/5)  node[circle,fill,green,inner sep=1pt]{};
		\draw (-12/5,4/5)  node[circle,draw, green,inner sep=3pt]{};
		
		\draw (6/5,-2/5)  node[circle,fill,green,inner sep=1pt]{};
		\draw (6/5,-2/5)  node[circle,draw,green, inner sep=3pt]{};
		\draw (8/5,-4/5)  node[circle,fill,green,inner sep=1pt]{};
		\draw (8/5,-4/5)  node[circle,draw,green, inner sep=3pt]{};
		\draw (12/5,-4/5)  node[circle,fill,green,inner sep=1pt]{};
		\draw (12/5,-4/5)  node[circle,draw, green,inner sep=3pt]{};
		
		\draw (-2/5,6/5)  node[circle,fill,green,inner sep=1pt]{};
		\draw (-2/5,6/5)  node[circle,draw,green, inner sep=3pt]{};
		\draw (-4/5,8/5)  node[circle,fill,green,inner sep=1pt]{};
		\draw (-4/5,8/5)  node[circle,draw,green, inner sep=3pt]{};
		\draw (-4/5,12/5)  node[circle,fill,green,inner sep=1pt]{};
		\draw (-4/5,12/5)  node[circle,draw, green,inner sep=3pt]{};
		
		\draw (-2/5,-6/5)  node[circle,fill,green,inner sep=1pt]{};
		\draw (-2/5,-6/5)  node[circle,draw,green, inner sep=3pt]{};
		\draw (-4/5,-8/5)  node[circle,fill,green,inner sep=1pt]{};
		\draw (-4/5,-8/5)  node[circle,draw,green, inner sep=3pt]{};
		\draw (-4/5,-12/5)  node[circle,fill,green,inner sep=1pt]{};
		\draw (-4/5,-12/5)  node[circle,draw, green,inner sep=3pt]{};
		
		\draw (2/5,-6/5)  node[circle,fill,green,inner sep=1pt]{};
		\draw (2/5,-6/5)  node[circle,draw,green, inner sep=3pt]{};
		\draw (4/5,-8/5)  node[circle,fill,green,inner sep=1pt]{};
		\draw (4/5,-8/5)  node[circle,draw,green, inner sep=3pt]{};
		\draw (4/5,-12/5)  node[circle,fill,green,inner sep=1pt]{};
		\draw (4/5,-12/5)  node[circle,draw, green,inner sep=3pt]{};
		
		\draw (2/5,6/5)  node[circle,fill,green,inner sep=1pt]{};
		\draw (2/5,6/5)  node[circle,draw,green, inner sep=3pt]{};
		\draw (4/5,8/5)  node[circle,fill,green,inner sep=1pt]{};
		\draw (4/5,8/5)  node[circle,draw,green, inner sep=3pt]{};
		\draw (4/5,12/5)  node[circle,fill,green,inner sep=1pt]{};
		\draw (4/5,12/5)  node[circle,draw, green,inner sep=3pt]{};
		\end{tikzpicture}
		\caption{\textit{Hilbert diagram} of the weights in the exceptional divisor $\PP\calN^{R_{0}}_x$.}\label{fig:weights0}
	\end{figure}

\begin{prop}\label{prop:extra0}
	For the group $ R_{0}\cong (\CC^*)^2 $ the extra term of $ A_{R_0}(t) $ is given by
	\begin{align*}
	\sum_{0\neq \beta'\in\calB(\rho)}\frac{1}{w(\beta', R_{0}, G)}t^{2d(\PP\calN^{R_{0}}, \beta')}P_t^{N(R_{0})\cap \Stab\beta'}(Z_{\beta', R_{0}}^{ss})&=\frac{t^{12}(1+2t^2+t^4-t^{12})}{(1-t^2)(1-t^4)}\\
	&\equiv 0 \ \mod t^{11}.
	\end{align*} 
\end{prop}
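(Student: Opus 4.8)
The plan is to read off the extra term from the \textit{HKKN stratification} of the linear $R_0=(\CC^*)^2$-action on the projectivised normal slice $\PP\calN_x^{R_0}\cong\PP^{11}$, whose twelve weights are displayed in Lemma \ref{lem:weights0} (Figure \ref{fig:weights0}). Because $Z_{R_0}^{ss}$ is the single point $\{x_0^2x_1^2y_0^2y_1^2\}$ (Proposition \ref{prop:blow-up}) and $N(R_0)=G$ (Proposition \ref{prop:normal}), the $(N(R_0)\cap\Stab\beta')$-equivariant fibration $\pi\colon Z_{\beta',R_0}^{ss}\to Z_{R_0}^{ss}$ of Section \S\ref{subsec:settingblowup} has a point as its base; hence $Z_{\beta',R_0}^{ss}=Z_{\beta',\rho}^{ss}$ and $N(R_0)\cap\Stab\beta'=\Stab\beta'$, so every ingredient of the extra term in Theorem \ref{thm:cohkirblow} is controlled by this single stratification.

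First I would describe the indexing set $\calB(\rho)$. Since $R_0$ is a torus its Weyl group is trivial, so $\calB(\rho)$ is the set of \emph{all} closest points to the origin of convex hulls of nonempty subsets of the twelve weights. The weight diagram of $\calN_x^{R_0}$ is exactly that of $X$ with the central weight deleted, and removing the origin changes only the stratum $\beta'=0$; thus the nonzero elements of $\calB(\rho)$ are precisely the $D_8$-orbits of the seven vectors $\beta$ listed in Table \ref{tab:1}. Two adjustments then transport that table to the present setting. By the codimension formula (\ref{codim}), $d(\PP\calN^{R_0},\beta')=n(\beta')$ since $\dim R_0/P_{\beta'}=0$; as the central weight lies strictly on the origin side of every $\beta'\neq0$, it is counted by the $n(\beta)$ of Table \ref{tab:1} but not by $n(\beta')$, so $2d(\PP\calN^{R_0},\beta')=2d(\beta)-2$. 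Moreover $w(\beta',R_0,G)$ is the cardinality of the $D_8$-orbit of $\beta'$, and because the summand is constant along each orbit the factor $1/w(\beta',R_0,G)$ collapses each orbit-sum to a single term, one per row of Table \ref{tab:1}.

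Combining this with $P_t^{N(R_0)\cap\Stab\beta'}(Z_{\beta',R_0}^{ss})=P_t^{\Stab\beta}(Z_\beta^{ss})=P_t^G(S_\beta)$ (Remark \ref{rmk:unstable}; last column of Table \ref{tab:1}), the extra term becomes the single sum
\[
\sum_{0\neq\beta\in\calB}t^{2d(\beta)-2}P_t^G(S_\beta),
\]
namely the seven rows of Table \ref{tab:1} with the power of $t$ lowered by two. Reducing the seven summands to the common denominator $(1-t^2)(1-t^4)$ and collecting terms produces the numerator $t^{12}+2t^{14}+t^{16}-t^{24}=t^{12}(1+2t^2+t^4-t^{12})$, which is the claimed expression; its lowest term being $t^{12}$, it vanishes modulo $t^{11}$. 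I expect the only delicate step to be the bookkeeping of the preceding paragraph—establishing that deletion of the central weight lowers every codimension by exactly one and that the $D_8$-orbit counting cancels against $1/w(\beta',R_0,G)$—after which the computation is a routine manipulation of power series.
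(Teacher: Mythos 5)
Your proposal is correct and follows essentially the same route as the paper: both read the extra term off the HKKN stratification of $\PP\calN_x^{R_0}$, observe that its data coincide with those of Table \ref{tab:1} except that each codimension drops by one because the central weight is absent, and sum the resulting seven contributions (one per $D_8$-orbit). Your explicit justification of the codimension shift and of the cancellation of the $1/w(\beta',R_0,G)$ factors against the $D_8$-orbit sizes is precisely the bookkeeping the paper leaves implicit, and the numerator $t^{12}+2t^{14}+t^{16}-t^{24}$ you obtain agrees with the stated result.
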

\begin{proof}
Let $ \{ \alpha_1,..., \alpha_{12} \} \subset  \mathrm{Lie}_{\mathbb{R}}(R_0) \cong \mathbb{R}^2  $ be the weights of the representation  $ \rho: R_0 \rightarrow \Aut(\calN_x^{R_0}) $ at $x=Z_{R_0}^{ss}$ as computed in Lemma \ref{lem:weights0}. After choosing a positive Weyl chamber for the adjoint action of $R_0\cong (\CC^*)^2$, which coincides with the whole $ \mathrm{Lie}_{\mathbb{R}}(R_0) $ in this case, we recall that an element $ \beta' \in \mathrm{Lie}_{\mathbb{R}}(R_0) $ belongs to the indexing set $ \calB(\rho) $ of the stratification if and only if $ \beta' $ is the closest point to the origin of the convex hull of some non-empty subset of $ \{ \alpha_1,..., \alpha_{12} \} $.

Table \ref{tab:R0} displays all the data required to compute the extra term for $R_0$. We notice that they clearly coincide with the information in Table \ref{tab:1} except for the codimension of the strata which is decreased by two, as the weight zero is missing. The value $ w(\beta', R_{0}, G) $ can be easily deduced from the diagram in Figure \ref{fig:weights0}, while the equivariant Hilbert-Poincar\'{e} series $P_t^{N(R_0) \cap \Stab \beta'}(Z_{\beta', \rho}^{ss})$ can be computed as in Lemma~\ref{lem:unstable1}, Lemma \ref{lem:unstable2} and Lemma \ref{lem:unstable3}.
\end{proof}

	\begin{table}[htpb]
		\centering
		\begin{tabular}{c|c|c|c|c}
			weights in $ \langle \beta \rangle $&$ w(\beta', R_{0}, G) $& $\Stab \beta $& $2d(\PP \calN ^{R_0},\beta')$&$P_t^{N(R_0) \cap \Stab \beta'}(Z_{\beta', \rho}^{ss})$\\[1ex]
			\hline \rule{0pt}{2.5ex}$ (4, -4) $&4& $ (\CC^*)^2\rtimes \zz $&22&$(1-t^2)^{-1}(1-t^4)^{-1} $\\[1ex]
			$ (4, 0), (2, -2), (0, -4) $&4& $ (\CC^*)^2\rtimes \zz $&16&$ (1+t^2-t^6)(1-t^2)^{-1}(1-t^4)^{-1} $\\[1ex]
			$ (4,4),(2,-2) $&8& $ (\CC^*)^2 $&16&$ (1-t^2)^{-1}$\\[1ex]
			$ (2,2),(0, -4) $&8& $ (\CC^*)^2 $&12&$ (1-t^2)^{-1}$\\[1ex]
			$ (4,4), (0, -4) $&8 &$ (\CC^*)^2 $&14&$ (1-t^2)^{-1} $\\[1ex]
			$ (2, 2), (2, -2) $&4 &$ \CC^*\times G_2 $&14&$ (1-t^2)^{-1} $\\[1ex]
			$ (4, 4), (4, 0), (4, -4) $&4 &$ \CC^*\times G_2 $&18&$ (1+t^2-t^6)(1-t^2)^{-1}(1-t^4)^{-1} $
		\end{tabular} 
		\caption{Cohomology of the unstable strata in the exceptional divisor.}\label{tab:R0}
	\end{table}

\begin{lemma}\label{lem:weights1}
	For $ R=R_1 $, $ \dim \calN_x^{R_1}=8 $, the weights of the representation $ \rho $ of $ R_1 $ on $ \calN_x^{R_1} $ are as follows with the respective multiplicities
	$$ (\pm 8)\times 1, \ (\pm 4)\times 3. $$
\end{lemma}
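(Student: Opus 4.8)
The plan is to read off the weights of $R_1$ directly from the Hilbert diagram of Figure \ref{fig:weights} and then to identify the normal slice $\calN_x^{R_1}$ with an explicit subquotient of $V:=H^0(\PP^1\times\PP^1,\calO_{\PP^1\times\PP^1}(4,4))^{\iota}\cong\CC^{13}$.

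First I would record the action of $R_1=\{(t,t)\in G^0\}$ on the thirteen monomial eigenvectors. Since $R_1$ is the diagonal one-parameter subgroup of $G^0=(\CC^*)^2$, its weight on a monomial whose $G^0$-weight is $(4-2i,4-2j)$ (via the correspondence \eqref{eq:weight}) is obtained by pairing with the direction $(1,1)$, i.e. it equals $(4-2i)+(4-2j)=8-2(i+j)$. Running through the thirteen monomials produces the weight multiset $(\pm 8)\times 1,\ (\pm 4)\times 3,\ 0\times 5$ on $V$. In particular the weight-zero eigenspace $V_0\subset V$ is five-dimensional, and, as already recorded in Proposition \ref{prop:blow-up}, its projectivisation is precisely the $R_1$-fixed locus $Z_{R_1}=\PP(V_0)=\PP(\CC[x_0y_1,x_1y_0]_4)\cong\PP^4$.

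Next I would compute the normal representation at a generic point $x=[v]\in Z_{R_1}^{ss}$. Because $v\in V_0$ has $R_1$-weight $0$, the line $\CC v$ is $R_1$-fixed and $T_xX\cong V/\CC v$ as $R_1$-modules. The key local observation is that $Z_{R_1}$, being the weight-zero eigenspace for the subtorus $R_1$ inside the abelian group $G^0$, is $G^0$-invariant; since $N(R_1)^0=G^0$, the orbit $G\cdot Z_{R_1}^{ss}\cong G\times_{N(R_1)}Z_{R_1}^{ss}$ is a finite union of $G^0$-translates of $Z_{R_1}^{ss}$, a generic point of which lies on the single component $Z_{R_1}^{ss}$. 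Consequently $T_x(G\cdot Z_{R_1}^{ss})=T_xZ_{R_1}=V_0/\CC v$, and therefore
\begin{equation*}
\calN_x^{R_1}=T_xX\big/T_xZ_{R_1}\cong (V/\CC v)\big/(V_0/\CC v)\cong V/V_0
\end{equation*}
as representations of $R_1$. The weights of $V/V_0$ are exactly the nonzero weights of $V$, namely $(\pm 8)\times 1$ and $(\pm 4)\times 3$; this is an eight-dimensional space, matching $\rk\calN^{R_1}=8$ found in Proposition \ref{prop:main1}.

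The only genuinely delicate point is the identification $T_x(G\cdot Z_{R_1}^{ss})=T_xZ_{R_1}$, i.e. that no orbit direction escapes the fixed locus at a generic $x$. This rests on $N(R_1)^0=G^0$ together with $Z_{R_1}$ being a full $G^0$-weight space, so that the infinitesimal $G^0$-action at $x$ already lands in $T_xZ_{R_1}$ and $\dim(G\cdot Z_{R_1}^{ss})=\dim Z_{R_1}^{ss}=4$. Once this is in place, the determination of the weights of $\rho$ is immediate from the weight count above, and everything else is the routine bookkeeping of the thirteen monomials.
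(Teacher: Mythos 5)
Your proposal is correct and follows essentially the same route as the paper: decompose $H^0(\calO_{\PP^1\times\PP^1}(4,4))^{\iota}\cong\CC^{13}$ into $R_1$-eigenlines via the Hilbert diagram to get the weight multiset $(\pm 8)\times 1,\ (\pm 4)\times 3,\ 0\times 5$, identify the orbit $G^0\cdot Z_{R_1}^{ss}=Z_{R_1}^{ss}$ with (an open part of) the weight-zero subspace, and quotient to obtain the nonzero weights on the normal slice. Your explicit justification that $T_x(G\cdot Z_{R_1}^{ss})=T_xZ_{R_1}$ at a generic point merely spells out what the paper asserts when it says the orbit "is an open part of a linear subspace, since it coincides with $Z_{R_1}^{ss}$", so there is no substantive difference.
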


\begin{proof}
The torus $R_1$ acts on the coordinates $ ((x_0:x_1), (y_0:y_1)) $ of $\PP^1\times \PP^1$ diagonally. Thus each monomial in $H^0(\calO_{\PP^1\times \PP^1}(4,4))^{\iota}$ is an eigenspace for the action of $ R_1 $. Hence $ H^0(\calO_{\PP^1\times \PP^1}(4,4))^{\iota}=\CC^{13} $ decomposes as a sum of one-dimensional representations of $ R_1 $ with the following multiplicities of weights:
$$ (\pm 8) \times 1, \ (\pm 4) \times 3, \ (0) \times 5.$$
The orbit $G^0\cdot Z_{R_1}^{ss}$ is an open part of a linear subspace, since it coincides with $Z_{R_1}^{ss}$. Therefore the tangent space at every point $x\in G^0\cdot Z_{R_2}^{ss}$ can be identified, via the Euler sequence, with the corresponding vector subspace 
$$\langle x_0^4y_1^4, \ x_0^3x_1y_0y_1^3, \ x_0^2x_1^2y_0^2y_1^2, \ x_0x_1^3y_0^3y_1, \ x_1^4y_0^4 \rangle \subset H^0(\calO_{\PP^1\times \PP^1}(4,4))^{\iota}.$$ 
Each monomial spans an eigenspace for the action of $R_1$ with weight zero, because $R_1$ is contained in the stabiliser of every point $x\in G^0\cdot Z_{R_2}^{ss}$.

By subtracting the weights $(0)\times 5$ of the representation of the tangent space to the orbit from the weights of the representation of $ R_1 $ on $ H^0(\calO_{\PP^1\times \PP^1}(4,4))^{\iota} $, we obtain the weights of the action on the normal space.
\end{proof}
\begin{prop}\label{prop:extra1}
	For the group $ R_1\cong \CC^* $ the extra term of $ A_{R_1}(t) $ is given by
	\begin{align*}
	\sum_{0\neq \beta'\in\calB(\rho)}\frac{1}{w(\beta', R_1, G)}t^{2d(\PP\calN^{R_1}, \beta')}P_t^{N(R_1)\cap \Stab\beta'}(Z_{\beta', R_1}^{ss})&=\frac{1+2t^2+2t^4+t^6}{1-t^2}(t^8+t^{10}+t^{12}+t^{14})\\
	&\equiv t^8+4t^{10}  \ \mod t^{11}.
	\end{align*} 
\end{prop}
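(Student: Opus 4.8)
The plan is to reproduce the bookkeeping of Proposition \ref{prop:extra0}, now for the rank--one subgroup $R_1$, and then to reduce the whole extra term to a single equivariant Poincaré series on the strict transform $Z_{R_1,1}^{ss}$, taken for an index--two subgroup of $N(R_1)$. First I would read off the normal representation from Lemma \ref{lem:weights1}: $\rho\colon R_1\to\Aut(\calN_x^{R_1})$ has weights $\pm 8$ of multiplicity $1$ and $\pm 4$ of multiplicity $3$, so $\PP\calN_x^{R_1}\cong\PP^7$. As $R_1\cong\CC^*$ the weights lie on a line and the closed Weyl chamber is a half--line, so the nonzero elements of $\calB(\rho)$—the closest points to the origin of the convex hulls of nonempty subsets of the weights—are exactly $\pm4$ and $\pm8$. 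Since $R_1$ is abelian, $\dim R_1/P_{\beta'}=0$ and $d(\beta')=n(\beta')$; counting the weights $\alpha$ with $\alpha\cdot\beta'<\|\beta'\|^2$ gives $2d(\PP\calN^{R_1},\pm4)=8$ and $2d(\PP\calN^{R_1},\pm8)=14$. The section $Z_{\beta',\rho}$ is the projectivised weight space, hence $\PP^2$ for $\beta'=\pm4$ and a point for $\beta'=\pm8$; on each weight space $R_1$ acts through a single character, so it acts trivially on $Z_{\beta',\rho}$ and $Z_{\beta',\rho}^{ss}=Z_{\beta',\rho}$. Finally the involution $(a,b)\leftrightarrow(b^{-1},a^{-1})$ of $W(G)=D_8$ sends $\beta'\mapsto-\beta'$, so within each $W(G)$--orbit the two diagonal vectors $\pm\beta'$ lie in $\calB(\rho)$ and $w(\beta',R_1,G)=2$ in all four cases.

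Because $\beta'$ lies along the diagonal direction of $R_1$, its stabiliser in $D_8$ is the factor swap $\langle\gamma\rangle$, whence $N(R_1)\cap\Stab\beta'=(\CC^*)^2\rtimes\langle\gamma\rangle=:N'$ for every $\beta'$. As $Z_{\beta',\rho}^{ss}=Z_{\beta',\rho}$, Remark \ref{rmk:cohextra} degenerates the fibration and yields $P_t^{N'}(Z_{\beta',R_1}^{ss})=P_t^{N'}(Z_{R_1,1}^{ss})\cdot P_t(Z_{\beta',\rho})$, with $P_t(Z_{\beta',\rho})=1+t^2+t^4$ for $\beta'=\pm4$ and $=1$ for $\beta'=\pm8$. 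Substituting into the extra--term formula of Theorem \ref{thm:cohkirblow}, the factor $1/w=1/2$ cancels the two equal contributions of $\pm\beta'$, giving
\begin{align*}
\sum_{0\neq\beta'\in\calB(\rho)}\frac{1}{w(\beta',R_1,G)}t^{2d(\PP\calN^{R_1},\beta')}P_t^{N'}(Z_{\beta',R_1}^{ss})
&=P_t^{N'}(Z_{R_1,1}^{ss})\bigl(t^8(1+t^2+t^4)+t^{14}\bigr)\\
&=P_t^{N'}(Z_{R_1,1}^{ss})\,(t^8+t^{10}+t^{12}+t^{14}).
\end{align*}

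The main obstacle is the remaining series $P_t^{N'}(Z_{R_1,1}^{ss})$: the $N(R_1)$--series of Proposition \ref{prop:main1} does not suffice, since restricting from $N(R_1)$ to the index--two subgroup $N'$ enlarges the equivariant cohomology. I would recompute it by rerunning the one--step argument of Proposition \ref{prop:main1} verbatim with $N(R_1)$ replaced by $N'$. Only the $\pi_0$--dependent inputs change: by Molien's formula $P_t^{N'}(\{p\})=P_t(BN')=(1-t^2)^{-1}(1-t^4)^{-1}$, since the $\gamma$--invariants of $\QQ[c_1,c_2]$ are generated by $c_1+c_2$ and $c_1c_2$; and since $\gamma$ fixes neither anti--diagonal direction $(2,-2)$ nor $(4,-4)$, the stabilisers of the two unstable strata of $Z_{R_1}\cong\PP^4$, and of the corresponding strata of the exceptional $\PP^3$, drop from $(\CC^*)^2\rtimes\zz$ to $(\CC^*)^2$. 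Carrying these through the HKKN formula of Theorem \ref{thm:equi} for $Z_{R_1}^{ss}$, adding the blow--up correction at $p$, and subtracting the induced exceptional strata exactly as before, all terms combine into
$$P_t^{N'}(Z_{R_1,1}^{ss})=\frac{(1+t^2)(1+t^2+t^4)}{1-t^2}=\frac{1+2t^2+2t^4+t^6}{1-t^2}.$$
Feeding this back into the displayed sum produces the asserted extra term, which modulo $t^{11}$ equals $t^8+4t^{10}$.
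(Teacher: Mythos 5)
Your proposal is correct and follows essentially the same route as the paper: identify $\calB(\rho)=\{0,\pm4,\pm8\}$ with $w=2$, $d(\pm4)=4$, $d(\pm8)=7$, factor out $P_t(Z_{\beta',\rho})$ via Remark \ref{rmk:cohextra}, and recompute the one-step Kirwan series for the index-two subgroup $(\CC^*)^2\rtimes\langle\gamma\rangle$ (noting the unstable strata stabilisers drop to $(\CC^*)^2$), arriving at $\frac{1+2t^2+2t^4+t^6}{1-t^2}$ exactly as in the paper. The only quibble is the passing remark that the closed Weyl chamber of $R_1\cong\CC^*$ is a half-line — it is the whole line since $R_1$ is abelian, which is why both $+\beta'$ and $-\beta'$ appear in $\calB(\rho)$ as you in fact use.
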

\begin{proof}
For brevity, we write $R=R_1$ and $N=N(R_1)$. By Lemma \ref{lem:weights1} we can take $ \calB(\rho)=\{\pm 8, \pm 4, 0 \} $ as indexing set of the stratification on the projective normal slice $\PP\calN_x^{R}$ at a point $x \in G\cdot Z_{R}^{ss}$. We can compute the codimension of the strata $ Z_{\beta', R}^{ss} $ via the formula (\ref{codim}):
	$$ d(\PP\calN_x^{R}, \beta')=n(\beta')-\dim(R/P_{\beta'}), $$
	where $ n(\beta') $ is the number of weights $ \alpha $ such that $ \alpha \cdot \beta'<||\beta'||^2 $ and $ P_{\beta'} $ is the associated parabolic subgroup. We have $ d(\pm 4)=4 $ and $ d(\pm 8)=7 $. Due to the symmetry, the coefficient for every weight is $ w(\beta', R, G)=2 $ and the stabiliser is $ \Stab \beta'=N\cap \Stab \beta'= (\CC^*)^2 \rtimes \zz $. The extension $(\CC^*)^2 \rtimes \zz$ is given by the involution $ (a, b)\leftrightarrow (b, a) $.
	
	By Remark \ref{rmk:cohextra}, we obtain for every $\beta' \in \calB (\rho) \setminus \{0\}$:
	$$ P_t^{N\cap \Stab \beta'}(Z_{\beta', R}^{ss})=P_t^{N\cap \Stab \beta'}(Z_{R,1}^{ss}) P_t(Z_{\beta', \rho}), $$
	because
	$$ Z_{\beta', \rho} = Z_{\beta', \rho}^{ss}=
	\begin{cases}
	\PP^2 & \beta'=\pm 4\\
	\PP^0 & \beta'=\pm 8.
	\end{cases}$$
    Therefore, we just need to compute $P_t^{(\CC^*)^2 \rtimes \zz}(Z_{R,1}^{ss})$ in a way similar to Proposition \ref{prop:main1}. Recall that by (\ref{eq:ZR1}) and (\ref{eq:ZR1blowup}) $Z_{R,1}^{ss}$ is isomorphic to the semistable locus in the blow-up of $Z_R^{ss}\subset Z_R^{ss}\cong \PP^4$ at $p=(0:0:1:0:0)$. By Theorem \ref{thm:cohkirblow}, the action of $(\CC^*)^2 \rtimes \zz$ on $Z_R$ leads to: 
    \begin{equation}\label{eq:extra1}
        \begin{aligned}
        P_t^{(\CC^*)^2 \rtimes \zz}(Z_{R,1}^{ss}) = & \ P_t^{(\CC^*)^2 \rtimes \zz}(Z_R^{ss})+P_t^{(\CC^*)^2 \rtimes \zz}(\{p\})(t^2+t^4+t^6)\\
        & -\sum_{0\neq \beta' \in \calB'}\frac{1}{w(\beta', R_0, (\CC^*)^2 \rtimes \zz)}t^{2d(\PP\calN_p, \beta')}P_t^{(\CC^*)^2 \rtimes \zz \cap \Stab \beta'}(Z_{\beta', R_0}^{ss}),
    \end{aligned}
    \end{equation}
where $\calB'$ is the indexing set of the HKKN stratification induced on the exceptional divisor $\PP\calN_p\cong \PP^3$. We now clarify how to calculate all the contributions appearing in the equality above.

Firstly, we choose to compute $P_t^{(\CC^*)^2 \rtimes \zz}(\{p\})$. The equivariant cohomology of a point is
$$ H^*_{(\CC^*)^2 \rtimes \zz}(\{p\})=H^*(B((\CC^*)^2 \rtimes \zz))=H^*(B\CC^*)^{\zz}=\QQ[c_1, c_2]^{\zz}=\QQ[c_1+c_2, c_1c_2],$$
where $c_1$ and $c_2$ are the generating classes of the cohomology of $B(\CC^*)^2$ and have both degree $2$. The action of $\zz$ interchanges the two classes, so we obtain $P_t^{(\CC^*)^2 \rtimes \zz}(\{p\})=(1-t^2)^{-1}(1-t^4)^{-1}$.

Secondly, we compute $P_t^{(\CC^*)^2 \rtimes \zz}(Z_R^{ss})$. We can once again apply Theorem \ref{thm:equi} and Remark \ref{rmk:unstable}, namely we consider the HKKN equivatiantly perfect stratification induced by the action of $(\CC^*)^2 \rtimes \zz$ on $Z_R$ and we find:
\begin{equation}\label{eq:extra1dopo}
P_t^{(\CC^*)^2 \rtimes \zz}(Z_R^{ss})=P_t^{(\CC^*)^2 \rtimes \zz}(Z_R)-\sum_{0 \neq \beta \in \calB}t^{2\codim(S_{\beta})}P_t^{\Stab \beta}(Z_{\beta}^{ss}).
\end{equation}
The indexing set of the previous stratification is $\calB=\{(0,0), (2, -2), (4, -4)\}$ and the data can be summarised as follows:
\begin{center}
    \begin{tabular}{c|c|c|c}
        $\calB\smallsetminus \{(0,0)\}$ & $Z_{\beta}^{ss} \subset Z_R$  & $\Stab \beta$ & $\codim(S_{\beta})$ \\
        \hline
        $(2, -2)$ & $(0:1:0:0:0)$ & $(\CC^*)^2$& 3\rule{0pt}{2.5ex} \\[1ex]
        $(4, -4)$ & $(1:0:0:0:0)$ & $(\CC^*)^2$& 4
    \end{tabular}
\end{center}
Recalling that $P_t^{(\CC^*)^2 \rtimes \zz}(Z_R)=P_t(\PP^4)P_t(B((\CC^*)^2 \rtimes \zz))$ and $P_t^{(\CC^*)^2}(\PP^0)=(1-t^2)^{-2}$, we obtain:
$$ P_t^{(\CC^*)^2 \rtimes \zz}(Z_R^{ss}) =\frac{1+t^2+t^4+t^6+t^8}{(1-t^2)(1-t^4)}-\frac{t^6+t^8}{(1-t^2)^2}=\frac{1+t^2+t^4-t^8-t^{10}}{(1-t^2)(1-t^4)}.$$

Finally, we need to consider the contribution coming from the stratification of the exceptional divisor $\PP \calN_p$. The indexing set of this stratification is $\calB'=\{(0,0), \pm(2, -2), \pm(4, -4)\}$ and the data we need to compute are summarised as follows:
\begin{center}
        \begin{tabular}{c|c|c|c|c}
        $\calB\smallsetminus \{(0,0)\}$   & $w(\beta', R_0, (\CC^*)^2 \rtimes \zz)$ & $(\CC^*)^2 \rtimes \zz \cap \Stab \beta'$ & $d(\PP\calN_p, \beta')$& $P_t^{(\CC^*)^2 \rtimes \zz \cap \Stab \beta'}(Z_{\beta', R_0}^{ss})$ \\[1ex]
        \hline
        $\pm (2, -2)$   & 2 & $(\CC^*)^2 $& 2& $(1-t^2)^{-2}$ \rule{0pt}{2.5ex} \\[1ex]
        
        $\pm(4, -4)$  & 2 & $(\CC^*)^2 $ & 3 & $(1-t^2)^{-2}$\\
    \end{tabular}
\end{center}
 By Remark \ref{rmk:cohextra}, the equivariant Hilbert-Poincar\'e polynomial of each stratum is
$$ P_t^{(\CC^*)^2 \rtimes \zz \cap \Stab \beta'}(Z_{\beta', R_0}^{ss})=P_t^{(\CC^*)^2 \rtimes \zz \cap \Stab \beta'}(\{p\})P_t(\PP^0)=P_t(B(\CC^*)^2)=(1-t^2)^{-2}.$$

Combining the three steps of calculations above leads to the result of (\ref{eq:extra2}):
$$P_t^{(\CC^*)^2 \rtimes \zz}(Z_{R,1}^{ss})=
\frac{1+t^2+t^4-t^8-t^{10}}{(1-t^2)(1-t^4)}+\frac{t^2+t^4+t^6}{(1-t^2)(1-t^4)}-\frac{t^4+t^6}{(1-t^2)^2}=\frac{1+2t^2+2t^4+t^6}{1-t^2}. $$
\end{proof}
\begin{lemma}\label{lem:weights2}
	For $ R=R_2 $, $ \dim \calN_x^{R_2}=10 $, the weights of the representation $ \rho $ of $ R_2 $ on $ \calN_x^{R_2} $ are as follows with the respective multiplicities
	$$ (\pm 4) \times 3, \ (\pm 2) \times 2. $$
\end{lemma}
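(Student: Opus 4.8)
The plan is to reproduce, for the rank-one subtorus $R_2 = \{(1,t) : t \in \CC^*\}$, the same bookkeeping carried out for $R_1$ in Lemma \ref{lem:weights1}. Since $R_2$ acts diagonally only on the second factor of $\PP^1 \times \PP^1$, every basis monomial of $H^0(\PP^1\times\PP^1,\calO_{\PP^1\times\PP^1}(4,4))^\iota \cong \CC^{13}$ is again an $R_2$-eigenvector, so the first step is to read off the $R_2$-weights and group them by multiplicity; the second step is to quotient out the weights tangent to the orbit.

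First I would decompose $\CC^{13}$ into $R_2$-weight spaces. The weight of $x_0^i x_1^{4-i} y_0^j y_1^{4-j}$ under $R_2$ is the projection of its $(\CC^*)^2$-weight onto $\text{Lie}(R_2)$, i.e.\ the second coordinate $4-2j$ of the correspondence (\ref{eq:weight}). Running through the basis, the nine monomials $x_0^{2k}x_1^{4-2k}y_0^{2l}y_1^{4-2l}$ carry weight $4-4l$, producing the weights $4,0,-4$ each with multiplicity three, while the four monomials $x_0^3x_1y_0^3y_1,\ x_0x_1^3y_0^3y_1$ (with $j=3$) and $x_0^3x_1y_0y_1^3,\ x_0x_1^3y_0y_1^3$ (with $j=1$) carry weights $-2$ and $+2$ respectively, each with multiplicity two. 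Hence the full representation has $R_2$-weights $(\pm 4)\times 3$, $(0)\times 3$, $(\pm 2)\times 2$.

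Next I would subtract the weights tangent to the orbit. As in the $R_1$ case, $Z_{R_2}\cong\PP^2$ is spanned by the three weight-zero monomials $x_0^4y_0^2y_1^2,\ x_0^2x_1^2y_0^2y_1^2,\ x_1^4y_0^2y_1^2$, so it is $G^0$-invariant and the orbit $G^0\cdot Z_{R_2}^{ss}$ coincides with $Z_{R_2}^{ss}$; via the Euler sequence its tangent space at a point $x$ is identified with the three-dimensional weight-zero eigenspace, on which $R_2$ acts trivially. Removing these three copies of weight $0$ from the weights of $\CC^{13}$ leaves exactly $(\pm 4)\times 3$, $(\pm 2)\times 2$ on the normal slice $\calN_x^{R_2}$, which therefore has dimension $13-3=10$, as asserted. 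The computation is routine; the only point requiring care is the assertion that $R_2$ fixes $Z_{R_2}$ pointwise, so that all orbit directions carry weight $0$, and this is immediate here because $Z_{R_2}$ is precisely the projectivized weight-zero eigenspace of $R_2$. Thus I do not anticipate any genuine obstacle beyond correctly tabulating the weights.
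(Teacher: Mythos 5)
Your proof is correct and follows essentially the same route as the paper: decompose $H^0(\calO_{\PP^1\times\PP^1}(4,4))^{\iota}\cong\CC^{13}$ into $R_2$-eigenlines, read off the weights $4-2j$ from the second coordinate of the Hilbert-diagram correspondence to get $(\pm4)\times3,\ (0)\times3,\ (\pm2)\times2$, identify the tangent directions to the orbit $G^0\cdot Z_{R_2}^{ss}=Z_{R_2}^{ss}$ with the weight-zero eigenspace via the Euler sequence, and subtract. The weight tabulation and the multiplicities all check out, so there is nothing to add.
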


\begin{proof}
The torus $R_2$ acts on the coordinates $ ((x_0:x_1), (y_0:y_1)) $ of $\PP^1\times \PP^1$ diagonally. Thus each monomial in $H^0(\calO_{\PP^1\times \PP^1}(4,4))^{\iota}$ is an eigenspace for the action of $ R_2 $. Hence $ H^0(\calO_{\PP^1\times \PP^1}(4,4))^{\iota}=\CC^{13} $ decomposes as a sum of one-dimensional representations of $ R_2 $ with the following multiplicities of weights:
$$ (\pm 4) \times 3, \ (\pm 2) \times 2, \ (0) \times 3.$$
The orbit $G^0\cdot Z_{R_2}^{ss}$ is an open part of a linear subspace, since it clearly coincides with $Z_{R_2}^{ss}$. Therefore the tangent space at every point $x\in G^0\cdot Z_{R_2}^{ss}$ can be identified, via the Euler sequence, with the corresponding vector subspace 
$$\langle x_0^4y_0^2y_1^2, \ x_0^2x_1^2y_0^2y_1^2, \ x_1^4y_0^2y_1^2 \rangle \subset H^0(\calO_{\PP^1\times \PP^1}(4,4))^{\iota}.$$ 
Each monomial spans an eigenspace for the action of $R_2$ with weight zero, because $R_2$ is contained in the stabiliser of every point $x\in G^0\cdot Z_{R_2}^{ss}$.

By subtracting the weights $(0)\times 3$ of the representation of the tangent space to the orbit from the weights of the representation of $ R_2 $ on $ H^0(\calO_{\PP^1\times \PP^1}(4,4))^{\iota} $, we obtain the weights of the action on the normal space.
\end{proof}
\begin{prop}\label{prop:extra2}
	For the group $ R_2\cong \CC^* $ the extra term of $ A_{R_2}(t) $ is given by
	\begin{align*}
	\sum_{0\neq \beta'\in\calB(\rho)}\frac{1}{w(\beta', R_2, G)}t^{2d(\PP\calN^{R_2}, \beta')}P_t^{N(R_2)\cap \Stab\beta'}(Z_{\beta', R_2}^{ss})&=\frac{1+t^2}{1-t^2}(t^{10}+t^{12}+t^{14}+t^{16}+t^{18})\\
	&\equiv t^{10} \ \mod t^{11}.
	\end{align*} 
\end{prop}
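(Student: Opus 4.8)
The plan is to follow exactly the scheme used in the proof of Proposition \ref{prop:extra1}, now with $R=R_2$ and $N=N(R_2)=G_1\times G_2$. First I would read off the indexing set of the stratification on the projective normal slice $\PP\calN_x^{R}$ from Lemma \ref{lem:weights2}: since the weights of $\rho$ are $(\pm 4)\times 3$ and $(\pm 2)\times 2$, I can take $\calB(\rho)=\{0,\pm 2,\pm 4\}$. Using the codimension formula (\ref{codim}) with $\dim(R/P_{\beta'})=0$ (as $R\cong\CC^*$ is a torus), I would count the weights $\alpha$ with $\alpha\cdot\beta'<\|\beta'\|^2$: for $\beta'=\pm2$ these are the five weights $-4$ (three times) and $-2$ (twice), and for $\beta'=\pm4$ the seven weights $-4$ (three times), $\pm2$ (twice each), giving $d(\PP\calN_x^{R},\pm 2)=5$ and $d(\PP\calN_x^{R},\pm 4)=7$, hence the powers $t^{10}$ and $t^{14}$. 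By the symmetry $\beta'\leftrightarrow-\beta'$ in the Weyl group, $w(\beta',R_2,G)=2$ for every $\beta'$, so the two signs combine into a single copy of each contribution.

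Next I would identify the fibres and the relevant subgroups. The subspace $Z_{\beta',\rho}$ is spanned by the weight vectors with $\alpha\cdot\beta'=\|\beta'\|^2$, so $Z_{\beta',\rho}=Z_{\beta',\rho}^{ss}\cong\PP^1$ for $\beta'=\pm 2$ (two weight vectors) and $\cong\PP^2$ for $\beta'=\pm 4$ (three weight vectors), giving $P_t(Z_{\pm 2,\rho})=1+t^2$ and $P_t(Z_{\pm 4,\rho})=1+t^2+t^4$. Since $\beta'$ points in the $R_2$-direction (the $y$-axis of the Hilbert diagram), the only non-torus element of $G$ fixing it is the involution of $G_1$ swapping $x_0\leftrightarrow x_1$; therefore $N(R_2)\cap\Stab\beta'=G_1\times T_2=(\CC^*)^2\rtimes\zz$, with $\zz$ acting by $(a,b)\leftrightarrow(a^{-1},b)$, and this is the same for all four $\beta'$. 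Because $Z_{\beta',\rho}^{ss}=Z_{\beta',\rho}$, Remark \ref{rmk:cohextra} then gives $P_t^{N\cap\Stab\beta'}(Z_{\beta',R}^{ss})=P_t^{(\CC^*)^2\rtimes\zz}(Z_{R,1}^{ss})\cdot P_t(Z_{\beta',\rho})$, reducing everything to the single series $P_t^{(\CC^*)^2\rtimes\zz}(Z_{R,1}^{ss})$.

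The main computation, and the step I expect to require the most care, is therefore $P_t^{(\CC^*)^2\rtimes\zz}(Z_{R,1}^{ss})$ for this smaller group $G_1\times T_2$. I would compute it exactly as the inner calculation in Proposition \ref{prop:main2}, namely by applying Theorem \ref{thm:cohkirblow} to the single Kirwan step given by blowing up $Z_R\cong\PP^2$ at $p=(0:1:0)$, but now equivariantly for $(\CC^*)^2\rtimes\zz$ rather than for the full $N=G_1\times G_2$. Concretely, Molien's formula gives $P_t^{(\CC^*)^2\rtimes\zz}(\{p\})=(1-t^2)^{-1}(1-t^4)^{-1}$; the HKKN stratification of the $(\CC^*)^2\rtimes\zz$-action on $Z_R$ has a single non-trivial stratum at the weight $(4,0)$, now with $\Stab\beta=(\CC^*)^2$ since the $x$-involution no longer fixes $\beta$, yielding $P_t^{(\CC^*)^2\rtimes\zz}(Z_R^{ss})=(1+t^2-t^6)(1-t^2)^{-1}(1-t^4)^{-1}$; and the exceptional $\PP^1$ contributes the extra term $t^2(1-t^2)^{-2}$. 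Combining the three pieces of the blow-up formula, i.e.\ $P_t^{(\CC^*)^2\rtimes\zz}(Z_R^{ss})$, the centre term $P_t^{(\CC^*)^2\rtimes\zz}(\{p\})\,t^2$, and this exceptional correction, collapses to $P_t^{(\CC^*)^2\rtimes\zz}(Z_{R,1}^{ss})=(1+t^2)(1-t^2)^{-1}$. The delicate point is precisely that, unlike in Proposition \ref{prop:main2}, the reflection of $G_2$ does not stabilise $\beta'$, so one must keep track of the correct involution on $(\CC^*)^2$ throughout and recompute the stabiliser of the torus stratum; getting this bookkeeping right is what makes the error term land on the claimed value.

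Finally I would assemble the sum: each of $\beta'=\pm 2$ contributes $\tfrac12 t^{10}\,P_t^{(\CC^*)^2\rtimes\zz}(Z_{R,1}^{ss})(1+t^2)$ and each of $\beta'=\pm 4$ contributes $\tfrac12 t^{14}\,P_t^{(\CC^*)^2\rtimes\zz}(Z_{R,1}^{ss})(1+t^2+t^4)$, so after combining the signs the total equals $P_t^{(\CC^*)^2\rtimes\zz}(Z_{R,1}^{ss})\bigl(t^{10}+t^{12}+t^{14}+t^{16}+t^{18}\bigr)=\frac{1+t^2}{1-t^2}\bigl(t^{10}+t^{12}+t^{14}+t^{16}+t^{18}\bigr)$, which is the asserted formula.
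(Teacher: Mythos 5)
Your proposal is correct and follows essentially the same route as the paper: the same indexing set $\{0,\pm 2,\pm 4\}$ with $d(\pm 2)=5$, $d(\pm 4)=7$, $w=2$, the identification $Z_{\pm 2,\rho}\cong\PP^1$, $Z_{\pm 4,\rho}\cong\PP^2$, the reduction via Remark \ref{rmk:cohextra} to the single series $P_t^{G_1\times\CC^*}(Z_{R,1}^{ss})$ (your $(\CC^*)^2\rtimes\zz$ with $(a,b)\leftrightarrow(a^{-1},b)$ is exactly $G_1\times\CC^*$), and the same three-piece blow-up computation yielding $(1+t^2)/(1-t^2)$. All intermediate values match the paper's, including the key point that the stabiliser of the $(4,0)$-stratum drops to $(\CC^*)^2$ because the acting group no longer contains the $G_2$-reflection.
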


\begin{proof} For brevity, we write $R=R_2$ and $N=N(R_2)$. By Lemma \ref{lem:weights2} we can take $ \calB(\rho)=\{\pm 4, \pm 2, 0 \} $ as indexing set of the stratification on the projective normal slice $\PP\calN_x^{R}$ at a point $x \in G\cdot Z_{R}^{ss}$. We can compute the codimension of the strata $ Z_{\beta', R}^{ss} $ via the formula (\ref{codim}):
	$$ d(\PP\calN_x^{R}, \beta')=n(\beta')-\dim(R/P_{\beta'}), $$
	where $ n(\beta') $ is the number of weights $ \alpha $ such that $ \alpha \cdot \beta'<||\beta'||^2 $ and $ P_{\beta'} $ is the associated parabolic subgroup. We have $ d(\pm 2)=5 $ and $ d(\pm 4)=7 $. Due to the symmetry, the coefficient for every weight is $ w(\beta', R, G)=2 $ and the stabiliser is $ \Stab \beta'=N\cap \Stab \beta'=G_1\times \CC^* $.
	
	By Remark \ref{rmk:cohextra}, we obtain for every $\beta' \in \calB (\rho) \setminus \{0\}$:
	$$ P_t^{N\cap \Stab \beta'}(Z_{\beta', R}^{ss})=P_t^{N\cap \Stab \beta'}(Z_{R,1}^{ss}) P_t(Z_{\beta', \rho}), $$
	because
	$$ Z_{\beta', \rho} = Z_{\beta', \rho}^{ss}=
	\begin{cases}
	\PP^1 & \beta'=\pm 2\\
	\PP^2 & \beta'=\pm 4.
	\end{cases}$$
    Therefore, we just need to compute $P_t^{G_1\times \CC^*}(Z_{R,1}^{ss})$ in a way similar to Proposition \ref{prop:main2}. Recall that by (\ref{eq:ZR2}) and (\ref{eq:ZR2blowup}) $Z_{R,1}^{ss}$ is isomorphic to the semistable locus in the blow-up of $Z_R^{ss}\subset Z_R^{ss}\cong \PP^2$ at $p=(0:1:0)$. By Theorem \ref{thm:cohkirblow}, the action of $G_1\times \CC^*$ on $Z_R$ leads to: 
    \begin{equation}\label{eq:extra2}
        \begin{aligned}
        P_t^{G_1\times \CC^*}(Z_{R,1}^{ss}) = & \ P_t^{G_1\times \CC^*}(Z_R^{ss})+P_t^{G_1\times \CC^*}(\{p\})t^2\\
        & -\sum_{0\neq \beta' \in \calB'}\frac{1}{w(\beta', R_0, G_1\times \CC^*)}t^{2d(\PP\calN_p, \beta')}P_t^{G_1\times \CC^* \cap \Stab \beta'}(Z_{\beta', R_0}^{ss}),
    \end{aligned}
    \end{equation}
    
where $\calB'$ is the indexing set of the HKKN stratification induced on the exceptional divisor $\PP\calN_p\cong \PP^1$. We now clarify how to calculate all the contributions appearing in the equality above.

Firstly, we choose to compute $P_t^{G_1\times \CC^*}(\{p\})$. The equivariant cohomology of a point is
$$ H^*_{G_1\times \CC^*}(\{p\})=H^*(B(G_1\times \CC^*))=H^*(BG_1)\otimes H^*(B\CC^*)=\QQ[c_1]^{\zz}\otimes \QQ[d_1],$$
where $c_1$ and $d_1$ are the generating classes of the cohomology of $B\CC^*$ and have both degree $2$. The action of $\zz$ interchanges the cohomology class with its opposite. By Molien's formula we obtain $P_t^{G_1\times \CC^*}(\{p\})=(1-t^2)^{-1}(1-t^4)^{-1}$.

Secondly, we compute $P_t^{G_1\times \CC^*}(Z_R^{ss})$. We can once again apply Theorem \ref{thm:equi} and Remark \ref{rmk:unstable}, namely we consider the HKKN equivatiantly perfect stratification induced by the action of $G_1\times \CC^*$ on $Z_R$ and we find:
\begin{equation}\label{eq:extra2dopo}
    P_t^{G_1\times \CC^*}(Z_R^{ss})=P_t^{G_1\times \CC^*}(Z_R)-\sum_{0 \neq \beta \in \calB}t^{2\codim(S_{\beta})}P_t^{\Stab \beta}(Z_{\beta}^{ss}).
\end{equation}
The indexing set of the previous stratification is $\calB=\{(0,0), (4, 0)\}$ and the data can be summarised as follows:
\begin{center}
        \begin{tabular}{c|c|c|c}
        $\calB\smallsetminus \{(0,0)\}$ & $Z_{\beta}^{ss} \subset Z_R$  & $\Stab \beta$ & $\codim(S_{\beta})$ \\
        \hline
        $(4,0)$ & $(1:0:0)$ & $(\CC^*)^2$& 2 \rule{0pt}{2.5ex} \\[1ex]
    \end{tabular}
\end{center}
Recalling that $P_t^{G_1\times \CC^*}(Z_R)=P_t(\PP^4)P_t(B(G_1\times \CC^*))$ and $P_t^{(\CC^*)^2}(\PP^0)=(1-t^2)^{-2}$, we obtain:
$$ P_t^{G_1\times\CC^*}(Z_R^{ss}) =\frac{1+t^2+t^4}{(1-t^2)(1-t^4)}-\frac{t^4}{(1-t^2)^2}=\frac{1+t^2-t^6}{(1-t^2)(1-t^4)}.$$

Finally, we need to consider the contribution coming from the stratification of the exceptional divisor $\PP \calN_p$. The indexing set of this stratification is $\calB'=\{(0,0), \pm(4, 0)\}$ and the data we need to compute are summarised as follows:
\begin{center}
        \begin{tabular}{c|c|c|c|c}
        $\calB\smallsetminus \{(0,0)\}$   & $w(\beta', R_0, G_1\times \CC^*)$ & $G_1\times \CC^* \cap \Stab \beta'$ & $d(\PP\calN_p, \beta')$& $P_t^{G_1\times \CC^* \cap \Stab \beta'}(Z_{\beta', R_0}^{ss})$ \\[1ex]
        \hline
        $\pm(4, 0)$  & 2 & $(\CC^*)^2$& 1& $(1-t^2)^{-2}$ \rule{0pt}{2.5ex} \\[1ex]
    \end{tabular}
\end{center}
 By Remark \ref{rmk:cohextra}, the equivariant Hilbert-Poincar\'e polynomial of each stratum is
$$ P_t^{G_1\times \CC^* \cap \Stab \beta'}(Z_{\beta', R_0}^{ss})=P_t^{G_1\times \CC^* \cap \Stab \beta'}(\{p\})P_t(\PP^0)=P_t(B(\CC^*)^2)=(1-t^2)^{-2}.$$

Combining the three steps of calculations above leads to the result of (\ref{eq:extra2}):
$$P_t^{G_1\times \CC^*}(Z_{R,1}^{ss})=\frac{1+t^2-t^6}{(1-t^2)(1-t^4)}+\frac{t^2}{(1-t^2)(1-t^4)}-\frac{t^2}{(1-t^2)^2}=\frac{1+t^2}{1-t^2}. $$
\end{proof}

\subsection{Cohomology of $M^K$} We complete the proof of Theorem \ref{thm:cohoblow}.

\begin{proof}[Proof of Theorem \ref{thm:cohoblow}]
	From Theorem \ref{thm:cohkirblow}, we need to put all the previous results together to find the Betti numbers of the Kirwan partial desingularization $ M^K $. For the sake of readability, we report only the polynomials modulo $t^{10}$, but one can double-check the result with the entire Hilbert-Poincar\'{e} series and observe that Poincar\'{e} duality effectively holds.
\begin{align*}
P_t(M^K)&=P_t^G(\widetilde{X}^{ss})\equiv   \\
&1+t^2+2t^4+2t^6+4t^8+4t^{10} \tag{Semistable locus}\\
&+t^2+t^4+2t^6+2t^8+4t^{10}-0 \tag{Error term for  $ R_0 $} \\
&+t^2+3t^4+6t^6+9t^8+12t^{10}-(t^8+4t^{10})\tag{Error term for $ R_1 $}\\
&+t^2+2t^4+3t^6+4t^8+5t^{10}-t^{10} \tag{Error term for $ R_2 $}\\
&\equiv 1+4t^2+8t^4+13t^6+18t^8+20t^{10} \ \mod t^{11}.
\end{align*}
\end{proof}

\section{Intersection cohomology of the moduli space $ M^{GIT} $}
\label{sec:intersection}

In this Section, we compute the intersection cohomology of $M^{GIT}$ descending from $M^K$, and thus prove the following:

\begin{theorem}\label{thm:intM}
	The intersection Hilbert-Poincar\'{e} polynomial of $ M^{GIT} $ is
    $$ IP_t(M^{GIT})= 1+t^2+2t^4+2t^6+3t^8+3t^{10}+3t^{12}+2t^{14}+2t^{16}+t^{18}+t^{20}  $$
\end{theorem}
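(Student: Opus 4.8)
The plan is to descend from the Kirwan blow-up $M^K$, whose Betti numbers are supplied by Theorem \ref{thm:cohoblow}, to $M^{GIT}$ by applying the Decomposition Theorem to the contraction $q\colon M^K\to M^{GIT}$. Since $M^K$ has only finite quotient singularities it is a rational homology manifold, so $IC_{M^K}\cong\QQ_{M^K}[\dim M^K]$ and $IP_t(M^K)=P_t(M^K)$. The morphism $q$ is projective, and away from the three centres $C_{R_i}:=G\cdot Z_{R_i}^{ss}/\!\!/G$ (for $i=0,1,2$) described in Proposition \ref{prop:blow-up} it is an isomorphism. Hence the Decomposition Theorem yields a splitting
\begin{equation*}
Rq_*IC_{M^K}\cong IC_{M^{GIT}}\oplus\bigoplus_{i}\big(\text{summands supported on }\overline{C_{R_i}}\big),
\end{equation*}
and on Poincar\'e polynomials
\begin{equation*}
P_t(M^K)=IP_t(M^{GIT})+\sum_{i=0}^{2}\mathrm{corr}_{R_i}(t).
\end{equation*}
The task thus reduces to identifying each correction term $\mathrm{corr}_{R_i}(t)$.

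The second step is to read off these corrections from local data. By Luna's slice theorem, near a generic point of $C_R$ the quotient $M^{GIT}$ is analytically isomorphic to the product of a neighbourhood in $C_R$ with the affine cone $\calN_x^{R}/\!\!/R$ over the projective quotient $\hat F_R:=\PP(\calN_x^{R})^{ss}/\!\!/R$, and under this identification $q$ is the blow-down of the vertex. The normal representations $\rho\colon R\to\GL(\calN_x^{R})$ and their weights are precisely those recorded in Lemmas \ref{lem:weights0}, \ref{lem:weights1} and \ref{lem:weights2}, while the cohomology of the fibres $\hat F_R$ comes from the $R$-equivariant stratification of $\PP\calN_x^{R}$ already analysed in the extra-term computations (Propositions \ref{prop:extra0}, \ref{prop:extra1}, \ref{prop:extra2}). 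The middle-perversity truncation rule for the intersection cohomology of a cone identifies the stalk of $IC_{M^{GIT}}$ with the primitive cohomology of $\hat F_R$; everything in $H^*(\hat F_R)$ outside the primitive part fails to survive into $IC_{M^{GIT}}$ and, tensored with $IP_t(C_R)$, gives the correction. Concretely,
\begin{equation*}
\mathrm{corr}_{R}(t)=IP_t(C_R)\cdot\big(P_t(\hat F_R)-PP_t(\hat F_R)\big),
\end{equation*}
where $PP_t(\hat F_R)$ is the primitive Poincar\'e polynomial of $\hat F_R$, isolated via hard Lefschetz.

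Carrying this out case by case requires the intersection cohomology of the three centres: $C_{R_0}$ is a single point, so $IP_t(C_{R_0})=1$; $C_{R_2}$ is a curve; and $C_{R_1}$ is the threefold parametrising unions of four conics. The latter two are themselves GIT quotients $Z_{R_i}^{ss}/\!\!/N(R_i)$ (with normalisers as in Proposition \ref{prop:normal}), whose intersection Betti numbers can be extracted by the same Kirwan machinery, or directly in the case of the curve. Substituting $IP_t(C_{R_i})$, the fibre polynomials $P_t(\hat F_{R_i})$ and their primitive parts into the formula above, and subtracting $\sum_i\mathrm{corr}_{R_i}(t)$ from $P_t(M^K)$, produces the claimed polynomial.

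The main obstacle is the precise bookkeeping of the Decomposition Theorem along the nested stratification. The centres are not disjoint: the point $C_{R_0}$ lies in the closures of both $C_{R_1}$ and $C_{R_2}$, so one must order the blow-downs (undoing $R_2$ and $R_1$, which commute, before $R_0$) and verify that the summands are genuinely the $IC$-sheaves of the $\overline{C_{R_i}}$ with no spurious deeper support, so as to avoid double counting over the deepest point. A further delicate step is pinning down the truncation for the possibly singular fibres $\hat F_R$ and handling the finite Weyl-group quotients that enter both the fibres and the centres, treated by passing to invariants as in Sections \ref{sec:strati}--\ref{sec:cohomology}. As a global consistency check, the output must be palindromic of degree $2\dim M^{GIT}=20$, that is, satisfy Poincar\'e duality for intersection cohomology, and must agree with $P_t^G(X^{ss})$ in the low degrees where $M^{GIT}$ is already an orbifold.
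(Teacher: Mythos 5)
Your overall strategy is the paper's: apply the Decomposition Theorem to the Kirwan blow-down and subtract, for each $R$, a correction of the form (cohomology of the centre) $\times$ (the part of the fibre cohomology killed by the middle-perversity truncation). Indeed your fibre factor $P_t(\hat F_R)-PP_t(\hat F_R)$ agrees, via hard Lefschetz, with the shift $\hat q_R$ in Kirwan's formula \cite[3.1]{Kir86}, which is exactly what the paper invokes as Theorem \ref{thm:blowdown}. However, two points in your proposal would derail the computation as written. First, the centre factor. You take $IP_t(C_R)$ for $C_R=G\cdot Z_R^{ss}/\!\!/G$ and hope to "verify that the summands are genuinely the $IC$-sheaves of the $\overline{C_{R_i}}$ with no spurious deeper support". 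That verification fails for the composite map $M^K\to M^{GIT}$: since $C_{R_0}$ lies in $\overline{C_{R_1}}$ and $\overline{C_{R_2}}$, the direct pushforward does acquire extra summands supported at the deep point. The clean statement, which is how Kirwan (and the paper) proceeds, undoes one blow-up at a time, and then the correct factor is the \emph{ordinary} cohomology of the strict transform $\hat Z_R/\!\!/N(R)$ at that stage, not $IP_t$ of the original centre. This is not a cosmetic difference: for $R_1$ the paper uses $P_t(Z_{R_1,1}/\!\!/N(R_1))=1+2t^2+2t^4+t^6$, the blow-up of the threefold $Z_{R_1}/\!\!/N(R_1)$ at the point $C_{R_0}$, and substituting $IP_t$ of the un-blown-up threefold changes $B_{R_1}(t)$ and hence the final answer. (Also note the invariants must be taken of the tensor product under $\pi_0 N(R)$, which splits only because these centres are simply connected, cf. Remark \ref{rmk:Bt}.)

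Second, and more substantially, your proposal does not engage with the hardest part of the proof: the fibre over the deepest centre, $\PP\calN_x^{R_0}/\!\!/R_0=\PP^{11}/\!\!/(\CC^*)^2$, has strictly polystable points (the weights in Lemma \ref{lem:weights0} reproduce essentially the original Hilbert diagram), so it is not an orbifold, hard Lefschetz must be applied to its \emph{intersection} cohomology, and $IH^*(\PP^{11}/\!\!/R_0)^{D_8}$ must itself be computed by a full nested run of the machinery: an equivariantly perfect stratification of $\PP^{11}$, a secondary Kirwan blow-up along the four loci $R\cdot(Z^0_{R_i})^{ss}$, and a second application of the Decomposition Theorem, all while tracking $\pi_0 N(R_0)\cong D_8$-invariants. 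This recursion occupies the bulk of the paper's proof of $B_{R_0}(t)$ and cannot be dismissed as "pinning down the truncation for the possibly singular fibres"; without it you cannot produce the term $B_{R_0}(t)=t^2+t^4+2t^6+2t^8+3t^{10}+\dots$ needed to pass from $P_t(M^K)$ to the stated polynomial. With these two repairs your argument becomes the paper's argument.
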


In the first part of the Section, we recall Kirwan's procedure to compare the cohomology of $ \widetilde{X}/\!\!/G $ and the intersection cohomology of $ X/\!\!/G $, as explained in \cite{Kir86}. This is in turn an application of the \textit{Decomposition Theorem} by Be\u{\i}linson, Bernstein, Deligne and Gabber (cf. \cite{BBD82}).

In the second part of the Section, instead of applying the Decomposition Theorem directly to the blow-down map $ M^K\rightarrow M^{GIT} $ at the level of GIT quotients, we follow Kirwan's results (see \cite{Kir86}) and study the variation of the intersection Betti numbers at the level of the parameter spaces $ X^{ss} $ and $ \widetilde{X}^{ss}$, under each stage of the resolution.

\subsection{General setting}
We start with the general setting, as in Section \S\ref{subsec:blow} and \S\ref{subsec:settingblowup}, of a projective manifold $ X $ acted on by a reductive group $ G $ together with a $G$-linearization. We suppose that we have already performed all the stages of the modification $ \widetilde{X}^{ss}\rightarrow X^{ss} $, indexed by the set $ \calR $, so that the \textit{Kirwan blow-up} $ \widetilde{X}/\!\!/G\rightarrow X/\!\!/G$ has been constructed by blowing up successively the (proper transforms of the) subvarieties $ Z_R^{ss}/\!\!/N(R) $.  Since the partial desingularization $ \widetilde{X}/\!\!/G $ has only finite quotient singularities, its intersection cohomology $ IH^*(\widetilde{X}/\!\!/G) $ with rational coefficients is isomorphic to the corresponding rational cohomology $ H^*(\widetilde{X}/\!\!/G) $, and so by the above results we know the Betti numbers of its intersection cohomology. Eventually, we will be able to find the intersection Betti numbers of $ X/\!\!/G $, by means of the following:

\begin{theorem}\cite[3.1]{Kir86}\label{thm:blowdown}
	In the above setting, the intersection Hilbert-Poincar\'{e} polynomial of the GIT quotient $ X/\!\!/G $ is related to that of the Kirwan blow-up via the equality
	$$ IP_t(X /\!\!/ G)=P_t(\widetilde{X}/\!\!/G)-\sum_{R\in\calR} B_R(t),$$
	where the error term is given by:
	$$ B_R(t)=\sum_{p+q=i}t^i \dim[H^p(\hat{Z}_{R}/\!\!/N^0(R))\otimes IH^{\hat{q}_R}(\PP \calN_x^R /\!\!/R)]^{\pi_0 N(R)}, $$
	where the integer $ \hat{q}_R=q-2 $ for $ q\leq \dim \PP\calN_x^R/\!\!/R $ and $ \hat{q}_R=q $ otherwise. The subvariety $ \hat{Z}_{R} $ is the strict transform of $ Z_R^{ss} $ in the appropriate stage of the resolution, while $ N(R)\subset G $ denotes the normaliser of $ R $. The GIT quotient $ \PP \calN_x^R /\!\!/R $ is constructed from the induced action of $ R $ on the normal slice $ \calN_x^R $ to the orbit $ G\cdot Z_R^{ss} $ in $ X^{ss} $ at a general point $ x\in Z_R^{ss} $.
\end{theorem}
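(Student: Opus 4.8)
The plan is to establish the formula by peeling off the blow-ups of the Kirwan desingularization one stage at a time and applying the \emph{Decomposition Theorem} to each, keeping careful track of the summands of the direct image that are not the intersection complex of the target. Writing the partial desingularization as a tower of blow-downs of GIT quotients
$$ \widetilde{X}/\!\!/G = Y_m \to Y_{m-1} \to \cdots \to Y_1 \to Y_0 = X/\!\!/G, $$
one for each $R \in \calR$ taken in order of decreasing $\dim R$, the centre of the $k$-th map is the (strict transform of the) locus $Z_R^{ss}/\!\!/N(R)$. Since $\widetilde{X}/\!\!/G$ has only finite quotient singularities it is a rational homology manifold, so $IH^*(\widetilde{X}/\!\!/G) \cong H^*(\widetilde{X}/\!\!/G)$ and the top of the tower is already understood. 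The goal is then to prove that at each stage the Decomposition Theorem gives $IP_t(Y_k) = IP_t(Y_{k-1}) + B_R(t)$, where $B_R(t)$ is the Poincaré polynomial of the summands supported on the centre; telescoping down the tower yields the claimed identity $IP_t(X/\!\!/G) = P_t(\widetilde{X}/\!\!/G) - \sum_{R\in\calR} B_R(t)$.

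The key geometric input is the local structure of $Y_{k-1}$ near the blow-up centre, supplied by Luna's étale slice theorem. Near a point of $\hat{Z}_R/\!\!/N(R)$, the quotient $Y_{k-1}$ is modeled on a fibre bundle over the base $\hat{Z}_R/\!\!/N^0(R)$, carrying a residual action of the finite group $\pi_0 N(R)$, whose typical fibre is the affine GIT cone $\calN_x^R/\!\!/R$ over the projective quotient $V := \PP\calN_x^R/\!\!/R$. The corresponding blow-down is, locally over the base, the product of the identity on $\hat{Z}_R/\!\!/N^0(R)$ with the single map that resolves the vertex of this affine cone, and it is an isomorphism away from the centre. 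This reduces the determination of $B_R(t)$ to the \emph{cone model} $\rho : \widetilde{C} \to C$, with $C = \calN_x^R/\!\!/R$ and $\widetilde{C}$ the blow-up of its vertex, together with a spreading-out argument over the base.

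Applying the Decomposition Theorem to $R\rho_* IC_{\widetilde{C}}$, the summand $IC_C$ splits off and the remaining summands are supported at the vertex, their stalks governed by $IH^*(V)$. The intersection-cohomology cone formula identifies $IH^q(C)$ with $IH^q(V)$ below the middle dimension of $C$ and with zero above it; comparing this with the contribution of the resolution isolates the supported part as $IH^{\hat{q}_R}(V)$, where the shift $\hat{q}_R = q-2$ for $q \le \dim V$ is produced by the degree-two hyperplane class along the exceptional divisor and $\hat{q}_R = q$ otherwise. Spreading this fibrewise contribution over the base through the bundle structure produces the tensor product $H^p(\hat{Z}_R/\!\!/N^0(R)) \otimes IH^{\hat{q}_R}(V)$ in total degree $i = p+q$, and since the whole construction is $\pi_0 N(R)$-equivariant one must pass to $\pi_0 N(R)$-invariants, recovering exactly the stated $B_R(t)$.

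The principal obstacle is the cone computation, and above all pinning down the degree shift: one must verify that the vertex-supported summands are precisely the truncated intersection cohomology of the exceptional divisor, with the correct cutoff at the middle dimension and the correct $(-2)$-shift below it. This rests on the interplay between the intersection cohomology of a projective cone, hard Lefschetz on $V = \PP\calN_x^R/\!\!/R$ (which splits its cohomology into primitive pieces and dictates how they are redistributed under the blow-up), and Deligne's construction of the intersection complex. Two further technical points require care: carrying the finite group $\pi_0 N(R)$ through the slice-theorem identification and the invariants functor at every step, including the possible monodromy of the fibrewise contribution over the base, which is trivialised by the $N^0(R)$-equivariant structure after quotienting; and checking that performing the blow-ups in stages along \emph{strict} transforms does not create interference between the contributions of distinct $R \in \calR$. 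The latter is guaranteed by the nesting-and-disjointness of the centres built into the construction of \S\ref{subsec:blow} (in the case at hand by the disjointness $G\cdot Z_{R_1,1}^{ss} \cap G\cdot Z_{R_2,1}^{ss} = \varnothing$ recorded earlier), so that the local cone model at each stage is left untouched by the previous blow-ups.
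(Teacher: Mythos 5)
The paper itself contains no proof of this statement: it is quoted directly from Kirwan \cite[3.1]{Kir86}, so your attempt must be measured against Kirwan's original argument. In outline you do follow it faithfully: the stagewise application of the Decomposition Theorem down the tower of blow-downs, the Luna-slice local model presenting each quotient near the centre as a fibration over $\hat{Z}_R/\!\!/N^0(R)$ with fibre the affine cone $C=\calN_x^R/\!\!/R$, the reduction to a cone model, the $\pi_0 N(R)$-invariants, and the observations about monodromy and about the strict transforms not interfering (which in the situation of this paper is the recorded disjointness of the centres) are all the right ingredients and match Kirwan's route.

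There is, however, a genuine error at exactly the step you flag as the principal obstacle, and as written it would yield the wrong $B_R(t)$. The affine cone $C$ is the topological cone over its \emph{link} $L$, an orbifold $S^1$-bundle over $V=\PP\calN_x^R/\!\!/R$, not the cone over $V$ itself. The cone formula therefore gives the stalk of $IC_C$ at the vertex as $IH^q(L)$ in degrees $q$ below the middle, and the Gysin sequence of $L\to V$, whose Euler class is the hyperplane class $h$ (injective below the middle by hard Lefschetz on $V$), identifies this with the primitive quotient $IH^q(V)/h\,IH^{q-2}(V)$. Your version, ``$IH^q(C)=IH^q(V)$ below the middle dimension of $C$,'' cannot stand: since the resolution retracts onto the exceptional fibre and $H^q(\mathrm{fibre})=IH^q(V)$ (as $V$ has only finite quotient singularities), your identification would force the vertex-supported summand of $R\rho_*IC_{\widetilde{C}}$ to vanish in all degrees below the middle and equal $IH^q(V)$ above it --- a complex that is not Verdier self-dual, contradicting properness of $\rho$, and incompatible with the shift $\hat{q}_R=q-2$ you are trying to derive. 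With the corrected stalk, the supported part in degree $q$ has dimension $\dim IH^q(V)-\dim\bigl(IH^q(V)/h\,IH^{q-2}(V)\bigr)=\dim IH^{q-2}(V)$ below the cutoff and $\dim IH^q(V)$ above it, which is self-dual and is precisely Kirwan's error term; one can check against Proposition \ref{prop:down2} that this, and not your version, reproduces the paper's computations. So the mechanism you name (hard Lefschetz and the degree-two hyperplane class) is the correct one, but it must enter through the Gysin sequence of the link, not through a direct comparison of $IH^*(C)$ with $IH^*(V)$; as the proposal stands, that comparison step fails.
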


\begin{remark} \label{rmk:Bt}
	If $ \hat{Z}_{R}/\!\!/N^0(R) $ is simply connected, which is always the case in our situation, then the action of $ \pi_0 N(R) $ on the tensor product splits \cite[\S 2]{Kir86}, thus the error term for the subgroup $ R $ is
	$$ B_R(t)=\sum_{p+q=i}t^i\dim H^p(\hat{Z}_{R}/\!\!/N(R))\cdot \dim IH^{\hat{q}_R}(\PP \calN_x^R/\!\!/R)^{\pi_0 N(R)}.$$
\end{remark}

\subsection{Cohomology of blow-downs for degree 2 Enriques suefaces} We want to apply Theorem \ref{thm:blowdown} to compute the intersection Betti numbers of the moduli space $M^{GIT}$ of degree 2 non-special Enriques surfaces. We now follow backwards the steps of the blow-down operations.

\begin{prop}\label{prop:down2}
	For the group $ R_2\cong \CC^* $, we have 
	\begin{enumerate}[(i)]
		\item $ Z_{R_2,1}/\!\!/N(R_2) $ is isomorphic to $ \PP^1 $;
		\item $ IP_t(\PP \calN_x^{R_2}/\!\!/R_2)= 1+2t^2+3t^4+4t^6+5t^8+4t^{10}+3t^{12}+2t^{14}+t^{16}$.
	\end{enumerate}
	The term $ B_{R_2}(t) $ is equal to
	$$ B_{R_2}(t)=t^2+2t^4+3t^6+4t^8+4t^{10}+4t^{12}+3t^{14}+2t^{16}+t^{18}.$$
\end{prop}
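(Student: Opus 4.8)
The plan is to apply Kirwan's blow-down formula (Theorem \ref{thm:blowdown}) in the simply-connected form of Remark \ref{rmk:Bt}, which writes $B_{R_2}(t)$ as the product of the Poincar\'e polynomial of $\hat{Z}_{R_2}/\!\!/N(R_2)=Z_{R_2,1}/\!\!/N(R_2)$ with a degree-shifted, $\pi_0 N(R_2)$-invariant piece of $IH^*(\PP\calN_x^{R_2}/\!\!/R_2)$. So I would first establish the two geometric inputs (i) and (ii), then determine the $\pi_0 N(R_2)$-action on the slice cohomology, and finally assemble everything with the prescribed shift.

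For (i): by Proposition \ref{prop:blow-up}, $Z_{R_2}\cong\PP^2$ with homogeneous coordinates $(a:b:c)$ attached to $x_0^4y_0^2y_1^2,\ x_0^2x_1^2y_0^2y_1^2,\ x_1^4y_0^2y_1^2$. Since $R_2=G_2^0$ and indeed all of $G_2$ fixes the common factor $y_0^2y_1^2$, the action of $N(R_2)=G_1\times G_2$ on $Z_{R_2}$ factors through $G_1=\CC^*\rtimes\zz$, with the torus acting by $(a:b:c)\mapsto(ta:b:t^{-1}c)$ and the involution swapping $a\leftrightarrow c$. The degree-two invariants $ac$ and $b^2$ then identify $Z_{R_2}^{ss}/\!\!/N(R_2)$ with $\PP^1$, the involution acting trivially on the quotient. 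As $Z_{R_2,1}/\!\!/N(R_2)$ is a proper birational modification of the smooth projective curve $\PP^1$, it is again $\PP^1$, so $P_t(\hat Z_{R_2}/\!\!/N(R_2))=1+t^2$.

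For (ii): by Lemma \ref{lem:weights2} the slice representation has weights $(\pm4)^{\times3},(\pm2)^{\times2}$, so $\PP\calN_x^{R_2}=\PP^9$. Since all weights are even, after dividing by the kernel $\mu_2$ the residual $\CC^*$ acts on the semistable locus with finite stabilisers; hence $\PP\calN_x^{R_2}/\!\!/R_2$ has only finite quotient singularities and $IH^*=H^*_{R_2}((\PP^9)^{ss})$. I would then run the equivariantly perfect stratification of Theorem \ref{thm:equi} for this $\CC^*$-action: the nonzero indices are $\beta\in\{\pm2,\pm4\}$ with $Z_{\pm2}\cong\PP^1$, $Z_{\pm4}\cong\PP^2$, stabiliser $\CC^*$ in each case, and codimensions $5$ and $7$. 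Subtracting these contributions from $P_t(\PP^9)/(1-t^2)$, the factor $(1-t^2)$ cancels and one is left with $(1+t^2+t^4+t^6+t^8)^2=1+2t^2+3t^4+4t^6+5t^8+4t^{10}+3t^{12}+2t^{14}+t^{16}$, as claimed.

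The main obstacle is the remaining ingredient: pinning down the action of $\pi_0 N(R_2)=\pi_0 G_1\times\pi_0 G_2\cong\zz\times\zz$ on $IH^*(\PP\calN_x^{R_2}/\!\!/R_2)$ and extracting the invariant dimensions. The generator of $\pi_0 G_2$ is the Weyl element of $R_2$, so it interchanges the weight-$(+w)$ and weight-$(-w)$ eigenspaces of the slice, while $\pi_0 G_1$ acts within the eigenspaces; I would track these symmetries through the stratification above to collapse the full Betti numbers $(1,2,3,4,5,4,3,2,1)$ to the invariant ones $(1,1,2,2,b_8,2,2,1,1)$ (the precise middle value $b_8$ is immaterial). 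Finally I assemble via Remark \ref{rmk:Bt}: with $d=\dim\PP\calN_x^{R_2}/\!\!/R_2=8$, the shift $\hat q=q-2$ for $q\le 8$ and $\hat q=q$ otherwise omits the middle group $IH^8$ entirely and produces the fibre factor $t^2+t^4+2t^6+2t^8+2t^{10}+2t^{12}+t^{14}+t^{16}$; since $\hat Z_{R_2}/\!\!/N^0(R_2)\cong\PP^1$ is simply connected the product splits, and multiplying by $1+t^2$ gives $B_{R_2}(t)=t^2+2t^4+3t^6+4t^8+4t^{10}+4t^{12}+3t^{14}+2t^{16}+t^{18}$.
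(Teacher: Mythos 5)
Your proposal is correct and follows essentially the same route as the paper: the equivariantly perfect stratification of $\PP^9$ under $R_2$ (with $Z_{\pm 2}\cong\PP^1$, $Z_{\pm4}\cong\PP^2$, codimensions $5$ and $7$) yielding $(1+t^2+\dots+t^8)^2$, passage to $\pi_0N(R_2)$-invariants by halving the paired strata and replacing $H^*(BR_2)$ by $\QQ[c^2]$, and assembly via Remark \ref{rmk:Bt} with $P_t(\PP^1)=1+t^2$; the paper's invariant middle Betti number is $3$, consistent with your (correct) observation that it is killed by the shift $\hat q_R$ and hence immaterial for $B_{R_2}(t)$.
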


\begin{proof} The GIT quotient $  Z_{R_2,1}/\!\!/N(R_2)$ is a normal unirational curve, hence isomorphic to the projective line.

In Lemma \ref{lem:weights2} the weights of the representation $\rho: R_2\rightarrow \Aut(\calN_x^{R_2}) $ were computed. Since there are no strictly-semistable points, the GIT quotient $ \PP \calN_x^{R_2}/\!\!/R_2=\PP^9/\!\!/R_2 $ is a projective variety of dimension 8 with at worst finite quotient singularities. Therefore $ IP_t(\PP^9/\!\!/R_2)=P_t(\PP^9/\!\!/R_2)=P_t^{R_2}((\PP^9)^{ss}) $ and using the usual $ R_2 $-equivariantly perfect stratification (see Theorems \ref{thm:strata} and \ref{thm:equi}) we obtain: 
	\begin{align*}
	P_t^{R_2}((\PP^9)^{ss})&=P_t(\PP^9)P_t(BR_2)-\sum_{0\neq \beta'\in \calB(\rho)}t^{2d(\beta')}P_t^{R_2}(S_{\beta'})\\
	 &=\frac{1+...+t^{18}}{1-t^2}-2\frac{t^{10}(1+t^2)+t^{14}(1+t^2+t^4)}{1-t^2}.
	\end{align*}
	Now we need to know the dimension of $ IH^{\hat{q}}(\PP^9/\!\!/R_2)^{\pi_0 N(R_2)} $. The action of $ \pi_0 N(R_2)\cong \zz \times \zz$ on the cohomology of $ \PP^9 $ is trivial, while its action on $ R_2 $ is as follows: the first factor acts trivially and the second one acts by inversion. Moreover, $ \pi_0 N(R_2) $ acts on the strata interchanging the positive-indexed ones with the negative-indexed ones:
	\begin{equation}\label{eq:down2}
	    \begin{aligned}
	    IP_t(\PP^9/\!\!/R_2)^{^{\pi_0 N(R_2)}} &=\frac{1+...+t^{18}}{1-t^4}-\frac{t^{10}+...+t^{18}}{1-t^2} \\
	    &= 1+t^2+2t^4+2t^6+3t^8+2t^{10}+2t^{12}+t^{14}+t^{16}.
	    \end{aligned}
	\end{equation}
	Now the final statement easily follows from the definition of $ B_{R_2}(t) $ in Theorem \ref{thm:blowdown}.
\end{proof}
\begin{prop} \label{prop:down1}
	For the group $ R_1\cong \CC^* $, we have 
	\begin{enumerate}[(i)]
		\item $ Z_{R_1,1}/\!\!/N(R_1) $ is a simply connected threefold and $ P_t(Z_{R_1,1}/\!\!/N(R_1))=1+2t^2+2t^4+t^6 $;
		\item $ IP_t(\PP \calN_x^{R_1}/\!\!/R_1)=1+2t^2+3t^4+4t^6+3t^8+2t^{10}+t^{12}. $
	\end{enumerate}
	The term $ B_{R_1}(t) $ is equal to
	$$ B_{R_1}(t)=t^2+3t^4+6t^6+9t^8+10t^{10}+9t^{12}+6t^{14}+3t^{16}+t^{18}$$
\end{prop}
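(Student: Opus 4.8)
The plan is to apply Theorem~\ref{thm:blowdown}, in the simplified form of Remark~\ref{rmk:Bt}, exactly as in the proof of Proposition~\ref{prop:down2}; the task then reduces to establishing the two ingredients (i) and (ii) and to determining the $\pi_0 N(R_1)$-action needed to assemble $B_{R_1}(t)$. Writing $R=R_1$, I will use from Proposition~\ref{prop:blow-up} that $Z_{R_1}\cong\PP^4$, from Proposition~\ref{prop:normal} that $N(R_1)=(\CC^*)^2\rtimes(\zz\times\zz)$, and from Lemma~\ref{lem:weights1} the weights of $\rho$. The guiding bookkeeping device is that the first involution of $\pi_0 N(R_1)$, namely $(a,b)\leftrightarrow(b,a)$, fixes $R_1=\{(t,t)\}$ pointwise, whereas the second, $(a,b)\leftrightarrow(b^{-1},a^{-1})$, inverts it; this dictates how $\zz\times\zz$ acts on every equivariant parameter and on the Hesselink strata.

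For part (i), since $R_1$ acts trivially on $Z_{R_1}\cong\PP^4$, the effective action of $N^0(R_1)=(\CC^*)^2$ factors through $(\CC^*)^2/R_1\cong\CC^*$, which acts on the coordinates $A,\dots,E$ of Proposition~\ref{prop:blow-up} with weights $4,2,0,-2,-4$; hence $Z_{R_1,1}/\!\!/N(R_1)$ is a threefold. I would first compute $P_t(Z_{R_1,1}/\!\!/N^0(R_1))=P_t^{\CC^*}(Z_{R_1,1}^{ss})$ by running the effective-torus analogue of Proposition~\ref{prop:main1}: apply Theorems~\ref{thm:strata} and~\ref{thm:equi} to the $\CC^*$-action on $\PP^4$, add the single Kirwan blow-up of the strictly polystable fixed point $p=(0:0:1:0:0)$, and subtract the strata on the exceptional $\PP^3$, obtaining $1+3t^2+3t^4+t^6$. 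Taking $\pi_0 N(R_1)=\zz\times\zz$-invariants then yields the claimed $1+2t^2+2t^4+t^6$: the hyperplane class and the exceptional class are invariant, while the class pulled back from $B\CC^*$ is negated by the first involution (which inverts the effective torus $(\CC^*)^2/R_1$) and is therefore discarded, and $H^4$ follows by Poincar\'e duality. Simple connectivity of $Z_{R_1,1}/\!\!/N^0(R_1)$, needed for Remark~\ref{rmk:Bt}, follows as it is a GIT quotient of the rational, simply connected variety $\Bl_p\PP^4$ by the connected group $(\CC^*)^2$.

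For part (ii), Lemma~\ref{lem:weights1} gives $\PP\calN_x^{R_1}\cong\PP^7$ with $R_1=\CC^*$ acting with weights $(\pm 8)\times 1,\ (\pm 4)\times 3$. As no weight vanishes, every semistable point is stable, so $\PP^7/\!\!/R_1$ has only finite quotient singularities and $IP_t(\PP^7/\!\!/R_1)=P_t^{R_1}((\PP^7)^{ss})$, which I compute from the equivariantly perfect stratification (Theorems~\ref{thm:strata} and~\ref{thm:equi}, Remark~\ref{rmk:unstable}) just as in Proposition~\ref{prop:down2}; the two positive strata, with $Z_\beta\cong\PP^0,\PP^2$ for $\beta=8,4$, and their negatives give $1+2t^2+3t^4+4t^6+3t^8+2t^{10}+t^{12}$.

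Finally, to assemble $B_{R_1}(t)$ I pass to the $\pi_0 N(R_1)$-invariants of $IP_t(\PP^7/\!\!/R_1)$. Here the \emph{second} involution swaps the positive and negative strata and negates the $B\CC^*$-generator (it inverts $R_1$), while the \emph{first} acts trivially on the relevant cohomology — even on the weight-$4$ slice $\PP^2$, where it is a linear involution and hence trivial on $H^*(\PP^2)$ — producing $1+t^2+2t^4+2t^6+2t^8+t^{10}+t^{12}$. Inserting $P_t(Z_{R_1,1}/\!\!/N(R_1))$ and this invariant series into the formula of Theorem~\ref{thm:blowdown}, with the index shift $\hat q=q-2$ for $q\le\dim\PP\calN_x^{R_1}/\!\!/R_1=6$ (which suppresses the middle term $IH^6$), yields the stated $B_{R_1}(t)$. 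I expect the main obstacle to be precisely this last piece of $\zz\times\zz$-bookkeeping: one must keep straight that the two involutions act \emph{oppositely} on the diagonal subgroup $R_1$ and on the effective anti-diagonal torus (so that the ``negated'' parameter is the first involution's in part (i) but the second's in the assembly), and one must check that the first involution genuinely acts trivially on the cohomology of each stratum, since any slip there perturbs the invariant Betti numbers and hence $B_{R_1}$.
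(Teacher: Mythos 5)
Your proposal is correct and follows essentially the same route as the paper: Theorem \ref{thm:blowdown} together with Remark \ref{rmk:Bt}, the weights from Lemma \ref{lem:weights1}, the equivariantly perfect stratification of $(\PP^7)^{ss}$ giving $1+2t^2+3t^4+4t^6+3t^8+2t^{10}+t^{12}$, and the same $\pi_0 N(R_1)$-bookkeeping (second involution swaps $\pm\beta'$ and inverts $R_1$, first acts trivially) yielding the invariant series and the product formula for $B_{R_1}(t)$. The only (harmless) deviation is in part (i): the paper reuses $P_t^{N(R_1)}(Z_{R_1,1}^{ss})=\tfrac{1+t^2+t^4}{1-t^2}$ from Proposition \ref{prop:main1} and divides by $P_t(BR_1)^{\pi_0 N(R_1)}=(1-t^4)^{-1}$ via the splitting $H^*_{N}(Z_{R,1}^{ss})=H^*(Z_{R,1}/\!\!/N)\otimes H^*(BR)^{\pi_0 N}$, and it deduces simple connectivity from unirationality plus finite quotient singularities (Koll\'ar) rather than from your quotient-of-$\Bl_p\PP^4$ argument, whereas you compute $P_t(Z_{R_1,1}/\!\!/N^0(R_1))=1+3t^2+3t^4+t^6$ directly for the effective torus $(\CC^*)^2/R_1$ and then take invariants --- both give $1+2t^2+2t^4+t^6$.
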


\begin{proof} For brevity we write $ R=R_1 $, $ N=N(R_1) $ and $ \PP^7\cong \PP \calN_x^{R_1} $. The GIT quotient $  Z_{R,1}/\!\!/N$ is a unirational threefold with finite quotient singularities, hence simply connected by \cite[Theorem 7.8.1]{Kol93}. Its cohomology can be computed by means of the equality \cite[1.17]{Kir86}:
	$$ H_{N}^*(Z_{R,1}^{ss})=(H^*(Z_{R,1}/\!\!/N^0)\otimes H^*(BR))^{\pi_0 N}. $$
	The action of $ \pi_0 N $ splits on the tensor product, because also $ Z_{R,1}/\!\!/N^0 $ is simply connected, giving:
	$$ H^*_N(Z_{R,1}^{ss})=H^*(Z_{R,1}/\!\!/N)\otimes H^*(BR)^{\pi_0 N}.$$
	Recall that $ \pi_0 N=\zz \times \zz $: the first factor acts on $ R\cong \CC^* $ trivially, while the second one acts by inversion. Therefore
	$$ H^*(BR)^{\pi_0 N}=\QQ[c]^{\zz \times \zz}=\QQ[c^2], \ \deg(c)=2. $$
	In the proof of Proposition \ref{prop:main1}, we have already computed $ P_t^N(Z_{R,1}^{ss}) $, thus
	$$ P_t(Z_{R,1}/\!\!/N)=\frac{1+t^2+t^4}{1-t^2}(1-t^4)=1+2t^2+2t^4+t^6, $$
	completing the proof of (i). 
	
	In Lemma \ref{lem:weights1} the weights of the representation $\rho: R\rightarrow \Aut(\calN_x^R) $ were computed. Since there are no strictly-semistable points, the GIT quotient $ \PP^7/\!\!/R $ is a projective variety of dimension 6 with at worst finite quotient singularities. Therefore $ IP_t(\PP^7/\!\!/R)=P_t(\PP^7/\!\!/R)=P_t^{R}((\PP^7)^{ss}) $ and using the usual $ R $-equivariantly perfect stratification (see Theorems \ref{thm:strata} and \ref{thm:equi}) we obtain: 
	\begin{align*}
	P_t^{R}((\PP^7)^{ss})&=P_t(\PP^7)P_t(BR)-\sum_{0\neq \beta'\in \calB(\rho)}t^{2d(\beta')}P_t^R(S_{\beta'})\\
	&=\frac{1+...+t^{14}}{1-t^2}-2\frac{t^8(1+t^2+t^4)+t^{14}}{1-t^2}\\
	&=1+2t^2+3t^4+4t^6+3t^8+2t^{10}+t^{12}.
	\end{align*}
	Now we need to know the dimensions $ \dim IH^{\hat{q}}(\PP^7/\!\!/R)^{\pi_0 N} $. The action of $ \pi_0 N\cong \zz \times \zz$ on the cohomology of $ \PP^7 $ is trivial, while its action on $ R $ was explained above. Moreover, $ \pi_0 N $ acts on the strata interchanging the positive-indexed ones with the negative-indexed ones:
	\begin{equation}\label{eq:down1}
	    \begin{aligned}
	    IP_t(\PP^7/\!\!/R)^{^{\pi_0 N}} &=\frac{1+...+t^{14}}{1-t^4}-\frac{t^8+...+t^{14}}{1-t^2} \\
	    &= 1+t^2+2t^4+2t^6+2t^8+t^{10}+t^{12}.
	    \end{aligned}
	\end{equation}
	Now the final statement easily follows from the definition of $ B_R(t) $ in Theorem \ref{thm:blowdown}.
\end{proof}

\begin{prop}
	For the group $ R_0\cong (\CC^*)^2 $, we have 
	\begin{align*}
	B_{R_0}(t)&=\sum_{2 \le q \le 18}t^q \dim IH^{\hat{q}_{R_0}}(\PP \calN_x^{R_0}/\!\!/R_0)^{\pi_0 N(R_0)} \\
	&=t^2+t^4+2t^6+2t^8+3t^{10}+2t^{12}+2t^{14}+t^{16}+t^{18}.
	\end{align*}
\end{prop}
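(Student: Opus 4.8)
The plan is to reduce $B_{R_0}(t)$ to an intersection-cohomology computation on the projective normal slice, and then to carry that out by descending one further level with Kirwan's machinery. Since $Z_{R_0}^{ss}$ is a single point by Proposition~\ref{prop:blow-up}(i), its strict transform $\hat Z_{R_0}$ is again a point, so $\hat Z_{R_0}/\!\!/N^0(R_0)$ is a point and $P_t(\hat Z_{R_0}/\!\!/N(R_0))=1$. By Remark~\ref{rmk:Bt} the error term then collapses to exactly the degree-shifted, $\pi_0 N(R_0)$-invariant intersection Poincaré polynomial displayed in the statement, so the whole problem reduces to computing $IP_t(\PP\calN_x^{R_0}/\!\!/R_0)^{\pi_0 N(R_0)}$. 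By Lemma~\ref{lem:weights0} the slice is $\PP\calN_x^{R_0}\cong\PP^{11}$, the torus $R_0=(\CC^*)^2$ acts through the twelve $D_8$-symmetric weights of Figure~\ref{fig:weights0}, and $\pi_0 N(R_0)=D_8$ by Proposition~\ref{prop:normal}(i).

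The feature that makes this case harder than Propositions~\ref{prop:down2} and~\ref{prop:down1} is that the rank-two quotient $\PP^{11}/\!\!/R_0$ is genuinely singular. A point supported only on weights lying on one of the lines through the origin in Figure~\ref{fig:weights0} --- the two coordinate axes or the two diagonals --- has a one-dimensional stabiliser, a conjugate of $R_2$ (for the axes) or of $R_1$ (for the diagonals); these strictly polystable loci obstruct the equality $IH^*=H^*$. Consequently I cannot identify $IP_t$ with $P_t^{R_0}((\PP^{11})^{ss})$ as in the rank-one slices, and a recursive application of the desingularisation is unavoidable.

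To compute $IP_t(\PP^{11}/\!\!/R_0)^{D_8}$ I would first note that, the weights being $D_8$-symmetric, the $R_0$- and $G$-semistable loci coincide and $\PP^{11}/\!\!/G=(\PP^{11}/\!\!/R_0)/D_8$, whence $IP_t(\PP^{11}/\!\!/R_0)^{D_8}=IP_t(\PP^{11}/\!\!/G)$. I would then run Kirwan's programme on the slice in three steps, exactly paralleling the body of the paper. \emph{(i)} Compute the equivariant series $P_t^G((\PP^{11})^{ss})$ from the stratification of Theorems~\ref{thm:strata}--\ref{thm:equi}, starting from $P_t^G(\PP^{11})=(1+\dots+t^{22})/((1-t^4)(1-t^8))$ and subtracting the unstable strata: their codimensions are those of Table~\ref{tab:R0}, while the strata cohomologies $P_t^{\Stab\beta}(Z_\beta^{ss})$ are literally those of Table~\ref{tab:1}, since deleting the central weight changes neither $Z_\beta$ nor $\Stab\beta$. \emph{(ii)} Form the Kirwan blow-up of $\PP^{11}/\!\!/G$ by blowing up the two $G$-orbits of the $R_1$- and $R_2$-fixed loci, and compute its Betti numbers via Theorem~\ref{thm:cohkirblow}; the normal slices to these two (disjoint) centres carry only rank-one torus actions and hence, as in Propositions~\ref{prop:down2} and~\ref{prop:down1}, give non-singular quotients that are directly computable. \emph{(iii)} Descend through the Decomposition Theorem (Theorem~\ref{thm:blowdown}) applied to these two blow-downs, subtracting the resulting error terms $B'_{R_1},B'_{R_2}$, to arrive at $IP_t(\PP^{11}/\!\!/G)=1+t^2+2t^4+2t^6+3t^8+3t^{10}+2t^{12}+2t^{14}+t^{16}+t^{18}$.

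Finally I would apply the degree shift $\hat q=q-2$ (valid for $q\le 9=\dim\PP^{11}/\!\!/R_0$) to this polynomial; the unique term it suppresses is the middle class $IH^8$, which is recovered from $IH^{10}$ by Poincaré duality, and the stated expression for $B_{R_0}(t)$ then drops out. The hard part is precisely steps~\emph{(ii)}--\emph{(iii)}: because the rank-two slice is singular, its intersection cohomology can only be reached by this one-level-deeper Kirwan resolution, and the real labour lies in writing down the two sub-slice weight diagrams attached to $R_1$ and $R_2$, assembling their main, extra and decomposition-theorem contributions, and carrying the residual $D_8$-action through every stage so that the invariant part is extracted correctly.
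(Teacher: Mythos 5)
Your proposal is correct and follows essentially the same route as the paper: reduce $B_{R_0}(t)$ to the $\pi_0 N(R_0)$-invariant intersection cohomology of the rank-two slice quotient $\PP^{11}/\!\!/R_0$, observe that strictly polystable points (with stabilisers conjugate to $R_1$ or $R_2$) force a full one-level-deeper run of Kirwan's machinery (stratification, blow-up of the two orbit-classes of rank-one fixed loci, descent via the Decomposition Theorem), and then apply the degree shift $\hat q_{R_0}$. Your reformulation $IP_t(\PP^{11}/\!\!/R_0)^{D_8}=IP_t(\PP^{11}/\!\!/G)$ is only a notational repackaging of the paper's practice of carrying $\pi_0 N(R_0)$-invariants through each stage, and all intermediate and final polynomials you state agree with the paper's.
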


\begin{proof}
For brevity we write $ R=R_0 $, $ N=N(R_0) $ and $ \PP^{11}\cong \PP \calN_x^{R_0} $. Clearly $ Z_R/\!\!/N $ is a point, thus we have to compute only the invariant intersection cohomology of the GIT quotient $ \PP^{11}/\!\!/R $. By looking at the weights of the representation of $R$ on $\PP^{11}$ from Lemma \ref{lem:weights0}, we find that this action unfortunately gives rise to strictly polystable points, hence we need to perform all the Kirwan procedure again in this case. We also need to take care of the invariants with respect to the action of the finite group $ \pi_0 N\cong D_8 $ at every step.

The first step is considering the $ R $-equivarintly perfect stratification of $ \PP^{11} $, as explained in Theorems \ref{thm:strata} and \ref{thm:equi}. This stratification was already considered in Proposition \ref{prop:extra0}, leading to
\begin{equation}
    P_t^R((\PP^{11})^{ss})^{\pi_0 N}=\frac{1+...+t^{22}}{(1-t^4)(1-t^8)}-\frac{t^{12}(1+2t^2+t^4-t^{12})}{(1-t^2)(1-t^4)}.
\end{equation}

The first term in the above expression comes from the $R$-equivariant cohomology of $\PP^{11}$ and can be computed as in (\ref{eq:PGX}), while the second one is the sum of the contributions from the unstable strata from Proposition \ref{prop:extra0}. The group $\pi_0N$ acts trivially on the $R$-equivariant cohomology of $\PP^{11}$ as in \ref{eq:HGX}, while it identifies the unstable strata in the same orbit under the action of the Weyl group of $G$ (cf. Lemma \ref{lem:weights0}).  

The second step of Kirwan's method amounts to blowing up the strictly semistable loci in $(\PP^{11})^{ss}$, which are indexed by $\calR^0=\{R_1, R_2, R_3, R_4\}$, where $R_1$ and $R_2$ are defined as in Proposition \ref{prop:blow-up}, while 
$$ R_3=\{ (t,t^{-1})\in R: t\in \CC^* \}\cong \CC^* \text{ and } R_4=\{ (t,1)\in R: t\in \CC^* \}\cong \CC^*.$$ 
The fixed loci of these subgroups are permuted by the action of $\pi_0 N$. Indeed, $Z_{R_1}^0$ is isomorphic to $Z_{R_3}^0$ and they are interchanged by the reflection $\langle \sigma \rangle < \pi_0 N$ along the x-axis (cf. Figure \ref{fig:weights0}). Moreover, $Z_{R_2}^0$ is isomorphic to $Z_{R_4}^0$ and they are interchanged by the reflection $\langle \tau \rangle < \pi_0 N$ along the diagonal (cf. Figure \ref{fig:weights0}). In the following we give the description of the fixed loci $Z_{R_i}^0$ for $i=1, \ldots, 4$ and the weights of the action of $R$ from Lemma \ref{lem:weights0}:
\begin{enumerate}[(i)]
    \item $Z_{R_1}^0\cong \PP^3$ and $(Z_{R_1}^0)^{ss}=\PP^3 \smallsetminus \{z_0=z_1=0, \ z_2=z_3=0\}$ because
    $$(a, b)\cdot (z_0:z_1:z_2:z_3)=(a^{-4}b^{4}z_0:a^{-2}b^{2}z_1:a^{2}b^{-2}z_2:a^{4}b^{-4}z_3), \quad (a, b)\in R, \ z\in Z_{R_1}^0.$$ The same holds for $Z_{R_3}^0$.
    \item $Z_{R_2}^0\cong \PP^1$ and $(Z_{R_2}^0)^{ss}=\PP^1\smallsetminus \{(0:1), \ (1:0)\}$ because
    $$ (a, b)\cdot (z_0: z_1)=(a^{-4}z_0: a^{4}z_1), \quad (a, b)\in R, \ z\in Z_{R_2}^0.$$
    The same holds for $Z_{R_4}^0$.
\end{enumerate}

To construct the Kirwan blow-up $\widetilde{\PP^{11}}/\!\!/R$, we need to blow up the orbit loci $ \bigcup_{i=1}^4 R\cdot (Z^0_{R_i})^{ss} $. Notice that the order of the resolutions is irrelevant, since the centres of the blow-ups are disjoint. 
$$ \begin{tikzcd}
(\widetilde{\PP^{11}})^{ss}=(\Bl_{\bigcup_{i=1}^4 R\cdot (Z^0_{R_i})^{ss}}(\PP^{11})^{ss})^{ss} \arrow{r} \arrow{d} & (\PP^{11})^{ss} \arrow{d} \\
\widetilde{\PP^{11}}/\!\!/R \arrow{r}&  \PP^{11}/\!\!/R
\end{tikzcd} $$

Following Theorem \ref{thm:cohkirblow}, we can now compute the cohomology of $\widetilde{\PP^{11}}/\!\!/R$.

\begin{claim*}
The $\pi_0 N$-equivariant cohomology of the Kirwan blow-up $\widetilde{\PP^{11}}/\!\!/R$ is:
$$ P_t(\widetilde{\PP^{11}}/\!\!/R)^{\pi_0 N}=1+3t^2+5t^4+8t^6+10t^8+10t^{10}+8t^{12}+5t^{14}+3t^{16}+t^{18}. $$
\end{claim*}

\begin{proof}[Proof of Claim]
By Theorem \ref{thm:cohkirblow}, we have
\begin{equation}\label{eq:invAt}
    \begin{aligned}
    P_t(\widetilde{\PP^{11}}/\!\!/R)^{\pi_0 N}=P_t^R((\widetilde{\PP^{11}})^{ss})^{\pi_0 N}&=P_t^R((\PP^{11})^{ss})^{\pi_0N}+\left( \sum_{i=1}^4 A^0_{R_i}(t) \right)^{\pi_0 N}\\
    & =P_t^R((\PP^{11})^{ss})^{\pi_0N}+(A^0_{R_1}(t))^{\pi_0 N/\langle \sigma \rangle}+(A^0_{R_2}(t))^{\pi_0 N/\langle \tau \rangle}.
    \end{aligned}
\end{equation}
The last equality follows from the fact that the fixed loci and consequently the exceptional divisors are permuted by $ \pi_0 N$, as explained above. Hence we need to calculate the two contributions $ (A^0_{R_1}(t))^{\pi_0 N/\langle \sigma \rangle}$ and $ (A^0_{R_2}(t))^{\pi_0 N/\langle \tau \rangle}$ coming from the blow-ups. We distinguish the two cases.
\begin{enumerate}[(i)]
    \item The main term of $ (A^0_{R_1}(t))^{\pi_0 N/\langle \sigma \rangle}$ is
    $$ \left( \frac{1+t^2+t^4+t^6}{(1-t^4)^2}-\frac{t^4+t^6}{(1-t^2)(1-t^4)} \right)(t^2+\ldots + t^{14}),$$
    where $P^R_t((Z_{R_1}^0)^{ss})^{\pi_0 N/\langle \sigma \rangle}=P_t^{(\CC^*)^2\rtimes (\zz \times \zz)}((Z^0_{R_1})^{ss}) $ has been computed using Theorem \ref{thm:equi} and it is completely analogous to the calculation of (\ref{eq:main1}) in Proposition \ref{prop:main1}, while $\rk \calN^{R_1}=8$ in this case. The extra term of $ (A^0_{R_1}(t))^{\pi_0 N/\langle \sigma \rangle}$ is
    $$ \left( \frac{1+t^2+t^4+t^6}{(1-t^2)(1-t^4)}-\frac{t^4+t^6}{(1-t^2)^2} \right)(t^{8}+\ldots + t^{14}), $$
    where $P^R_t((Z_{R_1}^0)^{ss})^{\pi_0 N/\langle \sigma \rangle}=P_t^{(\CC^*)^2\rtimes \zz}((Z^0_{R_1})^{ss}) $ has been computed using Theorem \ref{thm:equi} and it is totally similar to the calculation of (\ref{eq:extra1dopo}) in Proposition \ref{prop:extra1}.
    
    \item The main term of $ (A^0_{R_2}(t))^{\pi_0 N/\langle \tau \rangle}$ is
    $$ \left( \frac{1+t^2}{(1-t^4)^2}-\frac{t^2}{(1-t^2)(1-t^4)} \right)(t^2+\ldots + t^{18}),$$
    where $P^R_t((Z_{R_2}^0)^{ss})^{\pi_0 N/\langle \tau \rangle}=P_t^{G_1\times G_2}((Z^0_{R_2})^{ss}) $ has been computed using Theorem \ref{thm:equi} and it is completely analogous to the calculation of (\ref{eq:main2}) in Proposition \ref{prop:main2}, while $\rk \calN^{R_2}=10$ in this case. The extra term of $ (A^0_{R_2}(t))^{\pi_0 N/\langle \tau \rangle}$ is
    $$ \left( \frac{1+t^2}{(1-t^2)(1-t^4)}-\frac{t^2}{(1-t^2)^2} \right)(t^{10}+\ldots + t^{18}), $$
    where $P^R_t((Z_{R_2}^0)^{ss})^{\pi_0 N/\langle \tau \rangle}=P_t^{G_1\times \CC^*}((Z^0_{R_2})^{ss}) $ has been computed using Theorem \ref{thm:equi} and it is totally similar to the calculation of (\ref{eq:extra2dopo}) in Proposition \ref{prop:extra2}.
\end{enumerate}
By summing and subtracting appropriately the previous terms according to Theorem \ref{thm:cohkirblow}, the result follows.
\end{proof}

The third step of Kirwan's procedure consists of computing the intersection cohomology of $\PP^{11}/\!\!/R $ descending from $\widetilde{\PP^{11}}/\!\!/R $ following Theorem \ref{thm:blowdown}. Since we need only the invariant part of $IP_t(\PP^{11}/\!\!/R)$ under $\pi_0 N$, we argue as in (\ref{eq:invAt}) and find:
\begin{equation*}\label{eq:Bt0}
    \begin{aligned}
    IP_t(\PP^{11}/\!\!/R)^{\pi_0 N}&  = P_t(\widetilde{\PP^{11}}/\!\!/R)^{\pi_0 N}-\left( \sum_{i=1}^4 B^0_{R_i}(t) \right)^{\pi_0 N}\\
    & =P_t(\widetilde{\PP^{11}}/\!\!/R)^{\pi_0 N}-(B^0_{R_1}(t))^{\pi_0 N/\langle \sigma \rangle}-(B^0_{R_2}(t))^{\pi_0 N/\langle \tau \rangle},
    \end{aligned}
\end{equation*}
where $B^0_{R_1}(t)$ and $B^0_{R_2}(t)$ are defined in Theorem \ref{thm:blowdown}. We now calculate the invariant part of these two contributions:

\begin{enumerate}[(i)]
    \item $Z_{Z_1}^0/\!\!/R$ is a simply connected surface by \cite[Theorem 7.8.1]{Kol93}, because it is unirational and has only finite quotient singularities. Hence we can compute $(B^0_{R_1}(t))^{\pi_0 N/\langle \sigma \rangle}$ by using Remark \ref{rmk:Bt}. The cohomology $P_t(Z_{R_1}^0/\!\!/R)^{\pi_0 N/\langle \sigma \rangle}$ can be calculated by means of the equality \cite[1.17]{Kir86} in a totally analogous way to Proposition \ref{prop:down1} (i):
    $$ P_t(Z_{R_1}^0/\!\!/R)^{\pi_0 N/\langle \sigma \rangle}= \dfrac{P_t^R((Z_{R_1}^0)^{ss})^{\pi_0 N/\langle \sigma \rangle}}{P_t(B R_1)^{\pi_0 N/\langle \sigma \rangle}}=\frac{1+t^2+t^4}{1-t^4} (1-t^4)=1+t^2+t^4.$$
    Since the normal bundle of $R\cdot (Z^0_{R_1})^{ss}$ coincides with the one considered in Proposition \ref{prop:down1}, we obtain from (\ref{eq:down1}) that
    $$ IP_t(\PP \calN_x^{R_1}/\!\!/R_1)^{\pi_0 N/\langle \sigma \rangle}=1+t^2+2t^4+2t^6+2t^8+t^{10}+t^{12}. $$
    By Theorem \ref{thm:blowdown} and Remark \ref{rmk:Bt} we find:
    $$ (B^0_{R_1}(t))^{\pi_0 N/\langle \sigma \rangle}=t^2+2t^4+4t^6+5t^8+5t^{10}+4t^{12}+2t^{14}+t^{16}. $$
    \item $Z_{Z_1}^0/\!\!/R$ is a point. Since the normal bundle of $R\cdot (Z^0_{R_2})^{ss}$ coincides with the one considered in Proposition \ref{prop:down2}, we obtain from (\ref{eq:down2}) that
    $$ IP_t(\PP \calN_x^{R_2}/\!\!/R_2)^{\pi_0 N/\langle \sigma \rangle}=1+t^2+2t^4+2t^6+3t^8+2t^{10}+2t^{12}+t^{14}+t^{16}. $$
    By Theorem \ref{thm:blowdown} and Remark \ref{rmk:Bt} we find:
    $$ (B^0_{R_2}(t))^{\pi_0 N/\langle \tau \rangle}=t^2+t^4+2t^6+2t^8+2t^{10}+2t^{12}+t^{14}+t^{16}. $$
\end{enumerate}
By (\ref{eq:Bt0}), the blow-down operations give
$$ IP_t(\PP^{11}/\!\!/R)^{\pi_0 N}=1+t^2+2t^4+2t^6+3t^8+3t^{10}+2t^{12}+2t^{14}+t^{16}+t^{18}. $$
Now the result follows the definition of $B_{R_0}(t)$.
\end{proof}

\subsection{Intersection cohomology of $ M^{GIT} $} We complete the proof of Theorem \ref{thm:intM}.

\begin{proof}[Proof of Theorem \ref{thm:intM}]
From Theorem \ref{thm:blowdown} putting all the previous results together, we obtain that the intersection Hilbert-Poincar\'{e} polynomial of the moduli space of non-special degree 2 Enriques surfaces $ M^{GIT}=X/\!\!/G $ is 
\begin{align*}
IP_t(M^{GIT})&=P_t(M^K)-\sum_{R\in \calR} B_R(t)\\
&=P_t^G(X^{ss})+\sum_{R\in \calR}(A_R(t)-B_R(t))\\
&\equiv 1+t^2+2t^4+2t^6+4t^8+4t^{10}+(t^{10}-t^8-2t^{10}+0) \ \mod t^{10}\\
&\equiv 1+t^2+2t^4+2t^6+3t^8+3t^{10}  \ \mod t^{10}.
\end{align*}
\end{proof}

Together with Theorem \ref{thm:cohoblow}, this also completes the proof of the main Theorem \ref{thm:main}.

\begin{remark}\label{rmk:coho}(cf. \cite[3.4]{Kir86})
	As a by-product of our result, we are able to determine the ordinary Betti numbers
	$$ H^i(M^{GIT})=IH^i(M^{GIT}) \ \mathrm{for} \ 13 \leq i\leq 20  $$ 
	and
	$$ H^i(X^{s}/G)=IH^i(M^{GIT}) \ \mathrm{for} \ 0 \leq i\leq 7,  $$ 
	where $ X^{s}/G=M^{GIT} \smallsetminus \bigcup_{R\in \calR} Z_R/\!\!/N(R) $ is the orbit space of GIT-stable curves.
\end{remark}

	\bibliographystyle{alpha}
	\bibliography{References_Enriques}
	
\end{document}